\documentclass[10pt]{article}

\usepackage[utf8]{inputenc}
\usepackage{hyperref}

\usepackage{amsmath,amsthm} 
\usepackage{amssymb,mathrsfs} 
\usepackage{a4wide} 
\usepackage{enumerate}
\usepackage[normalem]{ulem}
\usepackage{cancel}
\usepackage{bbm}
\usepackage{nicematrix}
\usepackage{nccmath}
\usepackage{mathtools}
\usepackage{bm}
\usepackage{xcolor}
\usepackage{relsize}
\usepackage{tikz-cd}

\newcommand{\cL}{\mathcal{L}}
\newcommand{\cR}{\mathcal{R}}
\renewcommand{\d}{\mathrm{d}}
\newcommand{\e}{\mathrm{e}}
\newcommand{\E}{\mathbb{E}}
\renewcommand{\P}{\mathbb{P}}
\renewcommand{\div}{\mathrm{div}}
\newcommand{\R}{\mathbb{R}}

\newcommand{\cX}{\mathcal{X}}
\newcommand{\cA}{\mathcal{A}}

\newcommand{\cC}{\mathcal{C}}
\newcommand{\cS}{\mathcal{S}}

\newcommand{\cP}{\mathcal{P}}
\newcommand{\cLham}{\mathcal A}
\newcommand{\cLou}{\mathcal S}

\newcommand{\cLod}{\cL_{\mathrm{KS}}}
\newcommand{\cSpd}{\mathcal{S}_d^{\scalebox{0.6}{++}}}
\newcommand{\wLmu}{L^2(\mu)}
\newcommand{\wLmuz}{L^2_0(\mu)}
\newcommand{\subplus}{\textnormal{\texttt{+}}}

\newcommand{\N}{\mathbb N}
\newcommand{\1}{\mathbbm 1}

\newcommand{\testfuncs}{\mathcal C^\infty_{\mathrm c}}
\newcommand{\dsint}{\displaystyle\int}

\newcommand{\Id}{\mathrm{Id}}

\newcommand{\cW}{\mathcal{W}}

\newcommand{\bigo}{\mathcal{O}}

\newtheorem{lemma}{Lemma}

\newtheorem{theorem}{Theorem}
\newtheorem{corollary}{Corollary}

\newtheorem{hypothesis}{Assumption}

\title{Overdamped limits for Langevin dynamics with position-dependent coefficients via $L^2$-hypocoercivity} 
\author{No\'e Blassel\\
   \small{MatMat group, Institute of Mathematics}\\\small{EPFL, Lausanne, Switzerland}  \\
   \small{\href{mailto:noe.blassel@epfl.ch}{noe.blassel@epfl.ch}}
}

\begin{document}
\maketitle

\begin{abstract}
    This note provides a simple derivation of the overdamped approximation for kinetic (or underdamped) equilibrium Langevin dynamics, in cases where certain coefficients depend on the position variable.
    The equivalent small-mass limit of these dynamics, known as the Kramers--Smoluchowski approximation, in the case of a state-dependent friction coefficient, has been previously studied by a variety of approaches.
    Our new approach uses hypocoercivity estimates, which may be of interest in their own right, and lead to a very direct derivation, providing in particular a clear explanation of the ``noise-induced drift'' term in the overdamped equation in the case of a state-dependent friction term.
    Using the same approach, we also treat effective kinetic dynamical models derived from a coarse-graining approximation of a high-dimensional system, as well as a class of kinetic dynamics with position-dependent mass matrices. All of these models are relevant to applications in computational chemistry.
    We finally identify a mistake in~\cite{WSW24} and suggest a solution.
\end{abstract}

\section{Introduction}
\label{sec:intro}

We study the overdamped limits of several kinetic Langevin equations, a typical example of which is given by the solution to the stochastic differential equation (SDE)
\begin{equation}
    \label{eq:underdamped_friction_dependent}
    \left\{
    \begin{aligned}
            \d q_t^\lambda &= \nabla U(p_t^\lambda) \d t,\\
            \d p_t^\lambda &= -\nabla V(q_t^\lambda) \d t - \lambda D(q_t^\lambda)^{-1}\nabla U(p_t^\lambda)\,\d t + \sqrt{\frac{2\lambda}{\beta}}D(q_t^\lambda)^{-1/2}\,\d W_t^\lambda,
        \end{aligned}
    \right.
\end{equation}
where $D:\cX\to \cSpd$ is a symmetric positive-definite matrix field called the inverse friction profile, and $W^\lambda$ is a standard Brownian motion in $\R^d$ given by a diffusive rescaling of some reference Brownian motion $W$, i.e. $W_t^\lambda = \sqrt{\lambda}W_{t/\lambda}$ for all~$\lambda,t>0$.

Note that instead of~$D^{-1/2}$ in~\eqref{eq:underdamped_friction_dependent}, we could take any matrix-field $A:\cX\to\R^{d\times d}$ such that~$AA^\top=D^{-1}$, with sufficient regularity (see Assumption~\ref{hyp:smoothness} below),
but since any two square roots of~$D^{-1}$ are conjugated via an orthogonal transformation, we may assume that~$A=D^{-1/2}$ is the positive square root of~$D^{-1}$, up to a change of Brownian motion.

In~\eqref{eq:underdamped_friction_dependent}, the functions~$U$ and~$V$ correspond respectively to the kinetic and potential energies, which sum to the Hamiltonian
\begin{equation}
    \label{eq:hamiltonian}
    H(q,p) = V(q)+U(p).
\end{equation}
The position variable~$q$ evolves in~$\mathcal X$, where $\cX$ is either the $d$-dimensional torus~$L\mathbb T^d$ for some~$L>0$, or~$\cX=\R^d$, while the momentum variable~$p\in\R^d$ is in all cases unbounded. The typical choice of kinetic energy is given by the physical definition
\begin{equation}
    \label{eq:kinetic_energy}
    U(p)=\frac12 p^\top M^{-1}p,
\end{equation}
where~$M$ is the mass matrix of the system, although other choices have been considered for applications to sampling high-dimensional measures, see for instance~\cite{ST18}.

We consider the large friction regime~$\lambda\to+\infty$, and to this effect, we introduce the following family of rescaled-in-time,~$W$-adapted processes:
\begin{equation}
    \label{eq:underdamped_time_rescaled}
    X_t^\lambda := q_{\lambda t}^\lambda,\qquad\forall\,\lambda>0,\,t\geq 0.
\end{equation}

Our main result, stated in Theorem~\ref{thm:overdamped_limit} below, states that trajectories of~$X^\lambda$ converge in the limit~$\lambda\to +\infty$ to solutions of the SDE
\begin{equation}
    \label{eq:overdamped_friction_dependent}
    \d X_t = -\left[D(X_t)\nabla V(X_t) - \frac1\beta\,\mathrm{div} D(X_t)\right]\,\d t + \sqrt{\frac{2}{\beta}}D^{1/2}(X_t)\d W_t,
\end{equation}
with initial data $X_0\in\cX$, and where~$\mathrm{div}$ denotes the row-wise divergence operator, which is well-defined under our regularity assumptions~(see Assumption~\ref{hyp:smoothness}). The dynamics~\eqref{eq:overdamped_friction_dependent} is the so-called Smoluchowski--Kramers dynamics. When~$D=\Id$ is the identity matrix, the dynamics~\eqref{eq:overdamped_friction_dependent} is also known as the overdamped Langevin equation.
In the case the kinetic energy is given by the physical choice~\eqref{eq:kinetic_energy}, the convergence actually holds pathwise (Corollary~\ref{corr:pathwise}), and the Brownian motion~$W$ in~\eqref{eq:overdamped_friction_dependent} is the same as the one appearing in~\eqref{eq:underdamped_friction_dependent}.
The term~$\frac1\beta\,\mathrm{div} D$, which is known as a ``spurious`` or ``noise-induced'' drift term in the experimental physics literature (see for example~\cite{VHBWB10,BVHWB11} and references therein).

In the case where the matrix~$D$ is a constant, the convergence of the dynamics~\eqref{eq:underdamped_friction_dependent} to the dynamics~\eqref{eq:overdamped_friction_dependent} is well known since the work of Smoluchowski~\cite{S16} and Kramers~\cite{K40}. Moreover, the large friction regime can be directly mapped to the small mass regime, via appropriate nondimensionalisations of the dynamics, see for instance~\cite[Section 2.2.4]{LRS10}.
The approximation of the dynamics~\eqref{eq:underdamped_friction_dependent} by the dynamics~\eqref{eq:overdamped_friction_dependent} in the small-mass regime is known as the Smoluchowski--Kramers approximation.
This approximation is much harder to obtain in the case of a position-dependent friction, which entails the noise in~\eqref{eq:underdamped_friction_dependent} and~\eqref{eq:overdamped_friction_dependent} is multiplicative, but has nevertheless been derived rigorously using a variety of methods (see~\cite{F04,FH12,HMVW15,WSW24} and references therein), including stochastic averaging and homogenization of PDEs approaches. The Kramers--Smoluchowski approximation has also been extended to more general settings, such as stochastic PDEs~\cite{C06}, kinetic diffusions on manifolds~\cite{BHVW17} and non-linear SDEs~\cite{LJW24}.

The purpose of this note is to provide a proof of the overdamped approximation (i.e. the limit $\lambda\to +\infty$) by different means than those previously used to study the small-mass limit, and for more general dynamics than~\eqref{eq:underdamped_friction_dependent}. We leverage analytical estimates from the theory of hypocoercivity~\cite{V06}, and more specifically the~$L^2$-hypocoercivity approach developed in~\cite{DKMS13,DMS15,BFLS22}.
Because of the structural assumption of~$L^2$-hypocoercivity, our result is for the moment limited to equilibrium systems, in which the force in~\eqref{eq:underdamped_friction_dependent} is a gradient and the fluctuation-dissipation relation holds between the kinetic damping term and the diffusion coefficient.
On the other hand, the result is quantitative in~$\lambda$, and valid for more general kinetic energies~$U$ than the physical kinetic energy~\eqref{eq:kinetic_energy} (at least at the level of time-marginals, see Theorem~\ref{thm:overdamped_limit} below for a precise statement). Besides, the proof is direct, and highlights the origin of the ``noise-induced drift'' in a straightforward way. We also identify a gap in the proof of~\cite[Lemma~3.1]{WSW24}, and supply a correct argument for a similar step in our setting.
The approach also generalizes to other underdamped dynamics, both relevant to molecular dynamics (MD) simulations: one is obtained by considering a low-dimensional effective kinetic model of some high-dimensional stochastic process (this case is studied in Theorem~\eqref{thm:effective_od}), and the other is a case where both the mass and the friction are matrix-valued position-dependent functions~(see Corollary~\ref{thm:overdamped_mass_matrix}).

The note is organized as follows. In Section~\ref{sec:notation}, we introduce the necessary notation and hypotheses, and state our main result, Theorem~\ref{thm:overdamped_limit}, which we prove in Section~\ref{sec:proof_main}, along with its Corollary~\ref{corr:pathwise}.
In Section~\ref{sec:coarse_graining}, we state and prove our second result (Theorem~\ref{thm:effective_od}), and discuss its implications for molecular modelling. We finally study in Section~\ref{sec:mass_matrices} cases with state-dependent mass matrices.
We finally gather in Appendix~\ref{sec:proofs} the proofs of some key technical results, including the hypocoercive estimates of Lemmas~\ref{lemma:hypocoercivity} and~\ref{lemma:hypocoercivity_bis}, which may be of independent interest.

\section{Notation and first main result}
\label{sec:notation}
In this section, we introduce the necessary notation, state our hypotheses and our main result, Theorem~\ref{thm:overdamped_limit} below.
\paragraph{Notation.}
Throughout this work, we abuse notation and denote probability measures with the same symbol as their densities whenever they are absolutely continuous with respect to the Lebesgue measure.
With this convention, the equilibrium (or Gibbs) measure~$\mu$ is defined (under Assumption~\ref{hyp:UVreg} below) by the probability measure
\begin{equation}
    \label{eq:boltzmann_gibbs}
    \mu(\d q\,\d p) = \mu(q,p)\,\d q\,\d p = \exp\left(-\beta H(q,p)\right)/Z_{\mu}\,\d q\,\d p,\qquad Z_{\mu}=\int_{\cX\times\R^d}\e^{-\beta H}.
\end{equation}
The measure~\eqref{eq:boltzmann_gibbs} is a product~$\mu(\d q\,\d p)=\kappa(\d p)\otimes\nu(\d q)$, with kinetic and configurational marginals
\begin{equation}
    \label{eq:gibbs}
    \kappa(p)\,\d p = \frac{\e^{-\beta U(p)}}{\dsint_{\R^d} \e^{-\beta U}}\,\d p,\qquad \nu(q)\,\d q = \frac{\e^{-\beta V(q)}}{\dsint_\cX \e^{-\beta V}}\,\d q.
\end{equation}
For any $f\in L^1(\cX\times \R^d,\mu)$, we define the projector
\begin{equation}
    \label{eq:kappa_proj}
    \Pi_0 f(q) = \int_{\R^d}f(q,p)\,\kappa(\d p)
\end{equation}
which sends~$f(q,p)$ to its partial average with respect to~$\kappa$. We will make frequent use of the weighted~$L^2$-space
\begin{equation}    
    \wLmu = \left\{f\in L^1_{\mathrm{loc}}(\cX\times\R^d)\,: \int_{\cX\times \R^d}f^2 \,\d \mu < +\infty\right\},
\end{equation}
and the closed subspace of~$\mu$-centered observables
\begin{equation}
    \wLmuz = \left\{f\in \wLmu:\,\int_{\cX\times\R^d}f\,\d\mu = 0\right\}.
\end{equation}
Note that~$\Pi_0$ is a~$\wLmu$-orthogonal projector, and that~${(\Id-\Pi_0)\wLmu \subset \wLmuz}$.
We define weighted Sobolev spaces, for~$k\geq 1$ by
\begin{equation}
    H^k(\mu) = \left\{\varphi\in L^2(\mu):\,\forall\,\alpha\in \N^d,\,|\alpha|\leq k,\,\partial^\alpha\varphi\in L^2(\mu)\right\},
\end{equation}
with the usual multi-index notation for~$\alpha\in \N^m$ i.e. $|\alpha|=\sum_{j=1}^m \alpha_j$, and~$\partial^\alpha f = \partial^{\alpha_1}_{x_1}\dots\partial_{x_m}^{\alpha_m}f$ for~$f:\R^m\to\R$.
Other weighted~$L^p$-spaces~$L^p(\mu)$,~$L^p(\nu)$,~$L^p(\kappa)$ and the associated Sobolev spaces are defined in a similar fashion.

The generator of the dynamics~\eqref{eq:underdamped_friction_dependent} acts on smooth functions as the differential operator
\begin{equation}
    \label{eq:generator}
    \cL_\lambda = \cLham + \lambda \cLou,
\end{equation}
where~$\cLham$ and~$\cLou$ are Hamiltonian transport and fluctuation-dissipation operators, defined respectively by
\begin{equation}
    \label{eq:generator_parts_tmp}
    \cLham = \nabla U(p)^\top\nabla_q - \nabla V(q)^\top\nabla_p,\qquad\cLou = -(D^{-1}(q)\nabla U(p))^\top\nabla_p + \frac1\beta D^{-1}(q):\nabla_p^2.
\end{equation}
The~$\wLmu$-adjoints of partial derivatives are found by integration by parts to be
\begin{equation}
    \label{eq:adjoints}
    \partial_{q_i}^* = -\partial_{q_i}+\beta\partial_{i}V(q),\qquad \partial_{p_i}^* = -\partial_{p_i}+\beta \partial_{i}U(p),\qquad\,\forall\, 1\leq i\leq d,
\end{equation}
from which we find
\begin{equation}
    \label{eq:generator_parts}
    \cLham = \frac1\beta\left[\nabla_p^{*}\nabla_q-\nabla_q^{*}\nabla_p\right],\qquad \cLou =-\frac1\beta \nabla_p^{*}D^{-1}(q)\nabla_p,
\end{equation}
so that $\lambda\cLou$ and~$\cLham$ correspond respectively to the symmetric and antisymmetric parts of~$\cL_\lambda$ on~$\wLmu$.

The generator of the dynamics~\eqref{eq:overdamped_friction_dependent}, on the other hand, is given by
\begin{equation}
    \cLod = -\nabla V^\top D\nabla + \frac1\beta\,\left(\mathrm{div}\, D\right)^\top \nabla + \frac1\beta D:\nabla^2 = \frac1\beta\e^{\beta V}\,\mathrm{div}\left(\e^{-\beta V}D\nabla\right),
\end{equation}
which is a symmetric operator on~$L^2(\nu)$, a property reflecting the reversibility of the dynamics~\eqref{eq:overdamped_friction_dependent} with respect to~$\nu$.

On the space~$\R^{d\times d}$ of~$d\times d$ matrices, we denote by~$\left\|\cdot\right\|_{\mathrm{HS}}$ the Hilbert--Schmidt (or Frobenius norm), and by~$\left\|\cdot\right\|_{\mathrm{op}}$ the Euclidean operator norm, as well as by~$L^p_{\mathrm{HS}}$ and~$L^p_{\mathrm{op}}$ the associated~$L^p$-spaces of matrix-valued functions for~$p\in [1,\infty]$.

The processes~$X,X^\lambda$ defined respectively in~\eqref{eq:overdamped_friction_dependent} and~\eqref{eq:underdamped_time_rescaled} are defined on some probability space~$(\Omega,\mathcal F,\P_{\mu_0})$,
and under the probability measure~$\P_{\mu_0}$,~$W$ is a Brownian motion independent from the initial conditions
\begin{equation}
    \label{eq:initial_conditions}
    (q_0^\lambda,p_0^\lambda)=(X_0,p_0)\,\qquad \forall\,\lambda >0,
\end{equation}
which in turn are distributed according to some probability measure~$\mu_0\in\cP(\cX\times\R^d)$. As we will consider various~$\mu_0$, this choice is emphasized in the notation~$\P_{\mu_0}$, and in the notation~$\E_{\mu_0}$ for the corresponding expectation functional.

\paragraph{Assumptions on~$U,V$ and~$D$.}
In this paragraph, we introduce various assumptions which we will use to state and prove our main result.

\begin{hypothesis}
    \label{hyp:smoothness}
    The coefficients of~\eqref{eq:underdamped_friction_dependent} are smooth
    \begin{equation}
        \label{eq:smoothness}
        V\in\mathcal C^\infty(\cX),\quad U\in\mathcal C^\infty(\R^d),\quad D\in\mathcal C^\infty(\cX;\cSpd).
    \end{equation}
    Furthermore, both~$U$ and~$V$ have precompact sublevel sets, and~$\nabla U,\nabla V$ are globally Lipschitz continuous.
    \end{hypothesis}

\begin{hypothesis}
    \label{hyp:hypoellipticity}
    For all~$\lambda>0$, the generator~$\cL_\lambda$ is hypoelliptic, in the sense that
    \begin{equation}
        \label{eq:hypoelliptic}
        g = \cL_\lambda f,\, g\in \mathcal C^\infty(\cX\times\R^d) \implies f\in \mathcal C^\infty(\cX\times \R^d).
    \end{equation}
\end{hypothesis}
Hypoellipticity can generally be checked using H\"ormander's bracket condition, see~\cite{H67} or Lemma~\ref{lemma:hypoellipticity} in Appendix~\ref{sec:proofs}, in which we give simple sufficient conditions on~$U$,~$V$ and~$D$ for the implication~\eqref{eq:hypoelliptic} to hold.

\begin{hypothesis}
    \label{hyp:elliptic}
    The diffusion~$D$ belongs to~$\cW^{2,\infty}(\cX;\cSpd)$ (i.e. it has bounded derivatives up to order~$2$) and is, moreover uniformly elliptic.
    
    In particular, the following holds:
    \begin{equation}
        \label{eq:elliptic}
        \exists\,M_D \geq 1:\forall\,q\in\cX, \frac1{M_D}\Id \leq D(q) \leq M_D \Id,\qquad D'\in L^\infty\left(\cX;\R^d\otimes \R^{d\times d}\right),
    \end{equation}
    where the inequalities are in the sense of symmetric matrices, and where~$D'$, defined by
    \begin{equation}
        \label{eq:diff_notation}
        \left(D'(q)\left[u\right] \right)_{ij} = \sum_{k=1}^d \partial_k D_{ij}(q)u_k,\qquad \forall (q,u)\in \cX\times\R^d,\,\forall\, 1\leq i,j\leq d,
    \end{equation}
    denotes the Fr\'echet differential of~$D$.
\end{hypothesis}
Note that the conjunction of Assumptions~\ref{hyp:smoothness} and~\ref{hyp:elliptic} implies that the same conditions on~$D^{-1}$ and~$D^{\pm1/2}$.
    
The next assumption (which contains in particular the conditions~\cite[Assumptions 3.1 and~3.2]{BFLS22}), ensures the validity of some key estimates (Lemma~\ref{lemma:hypocoercivity} below).
    \begin{hypothesis}
        \label{hyp:UVreg}
        The following conditions on~$U$ and~$V$ are satisfied.
        \begin{itemize}
        \item{It holds, for any~$\gamma>0$:
        \begin{equation}
            \label{eq:moment_bounds}
        (1+|p|^\gamma)\e^{-\beta U(p)}\in L^1(\R^d),\qquad(1+|q|^\gamma+|\nabla V(q)|^\gamma)\e^{-\beta V(q)}\in L^1(\cX).
        \end{equation}
         In particular, the equilibrium probability measure~\eqref{eq:gibbs} is well-defined.}
         \item{
          The marginals~$\kappa$ and~$\nu$ defined in~\eqref{eq:gibbs} satisfy Poincar\'e inequalities: there exist~$K_\kappa,K_\nu>0$ such that, for any~$\varphi\in H^1(\kappa)$ and~$\psi\in H^1(\nu)$,
        \begin{equation}
            \label{eq:poincare}
            \left\|\varphi-\kappa(\varphi)\right\|_{L^2(\kappa)}\leq \frac1{K_\kappa}\|\nabla_p\varphi\|_{L^2(\kappa)},\qquad \left\|\psi-\nu(\psi)\right\|_{L^2(\nu)}\leq \frac1{K_\nu}\|\nabla\psi\|_{L^2(\nu)},
        \end{equation}
        where we denote~$\kappa(\varphi)=\int_{\R^d}\varphi\,\d\kappa$ and~$\nu(\psi)=\int_{\cX}\psi\,\d\nu$.}
        \item{There exist constants~$c_1,c_2>0$ and~$0\leq c_3\leq 1$ such that the following bounds hold pointwise:
        \begin{equation}
        \label{eq:Vreg}
        \|\nabla ^2 V\|^2_{\mathrm{HS}} \leq c_1^2\left(d+|\nabla V|^2\right),\qquad \Delta V\leq c_2 d + \frac{c_3 \beta}{2}|\nabla V|^2.
        \end{equation}
        }
        \item{The function~$U$ is even, and for all multi-indices~$\alpha_1,\alpha_2,\alpha_3 \in\N^d$ such that~$|\alpha_i|\leq i$ for~${1\leq i\leq 3}$, it holds
        \begin{equation}
            \label{eq:Ureg}
        \partial^{\alpha_3} U\in L^2(\kappa),\qquad (\partial^{\alpha_2} U)(\partial^{\alpha_1}U) \in L^2(\kappa).
        \end{equation}}
        \end{itemize}
    \end{hypothesis}
    All the conditions on~$V$ and~$U$ (apart from the parity condition on~$U$) imposed by Assumptions~\ref{hyp:smoothness} and~\ref{hyp:UVreg} are satisfied whenever these two functions are smooth and grow quadratically at infinity.
    In particular, growth and integrability conditions on~$V$,~$D^{\pm 1}$ and their derivatives are vacuous whenever~$\cX$ is compact.

    The final assumption, which will be used to prove Corollary~\ref{corr:pathwise}, restricts the dynamics~\eqref{eq:underdamped_friction_dependent} to the standard physical setting, in which the kinetic energy is a positive non-degenerate quadratic form.
    \begin{hypothesis}
        \label{hyp:phys_kinetic}
        The function~$U$ in~\eqref{eq:underdamped_friction_dependent} is given by
        \begin{equation}
            \label{eq:phys_kinetic}
            U(p) = \frac12p^\top M^{-1}p
        \end{equation}
        for some constant matrix~$M\in\cSpd$.
    \end{hypothesis}

Our assumptions~(in particular Assumptions~\ref{hyp:smoothness} and~\ref{hyp:elliptic}), ensure that both~\eqref{eq:underdamped_friction_dependent} and~\eqref{eq:overdamped_friction_dependent} admit unique global-in-time solutions, see Lemma~\ref{lemma:wp} below.

\paragraph{Overdamped approximation.}
Our main result gives two convergence properties for the time-rescaled position process~$X^\lambda$, in the limit of a large friction intensity~$\lambda\to +\infty$.\\
Recall that~${(X_0=q_0^\lambda,p_0^\lambda)\sim \mu_0}$ for any~$\lambda>0$.
\begin{theorem}
\label{thm:overdamped_limit}
Assume that~$\mu_0\ll \mu$ is such that
\begin{equation}
\label{eq:warm_start}
(X_0,p_0)\sim \mu_0,\qquad \frac{\d\mu_0}{\d\mu}\in L^p(\mu)\qquad\text{for some }p\in(1,\infty].
\end{equation}
Suppose that Assumptions~\ref{hyp:smoothness},~\ref{hyp:hypoellipticity},~\ref{hyp:elliptic} and \ref{hyp:UVreg} are satisfied, and let~$q\in[1,\infty)$ be the H\"older-conjugate of~$p$, namely~$q = p/(p-1)$.

Then for any~$T>0$ and~$0<\alpha\leq\frac{2}{q}$, there exists~$C(T,\alpha,\mu_0)>0$ such that    
    \begin{equation}
    \label{eq:convergence_in_l2}
    \underset{0\leq t\leq T}{\sup}\,\E_{\mu_0}\left[|X_t^\lambda-X_t|^\alpha\right]\leq \frac{C(T,\alpha,\mu_0)}{\lambda^{\alpha/2}}.
    \end{equation}
\end{theorem}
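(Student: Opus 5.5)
The strategy is to write the time-rescaled position process as the overdamped dynamics plus small remainders, using a Poisson equation for the momentum, and then close with Gronwall. The remainders are controlled in $\wLmu$ by the hypocoercive estimates of Lemma~\ref{lemma:hypocoercivity}, and Hölder's inequality with the density $\d\mu_0/\d\mu\in L^p(\mu)$ transfers these stationary bounds to the initial law $\mu_0$; this transfer is why the exponent is restricted to $\alpha\le 2/q$.

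\textbf{Step 1: corrector and decomposition.} Since $\cLou$ acts only on $p$ and $\cLou\big(D(q)\nabla_p U\big)$ relates to $-\nabla U$ only when $U$ is quadratic, I will not invert $\cLou$ explicitly in general. Instead I use the generator $\cL_\lambda=\cLham+\lambda\cLou$ directly. Because $U$ is even, $\Pi_0\nabla U=0$. I take as corrector the solution $\phi$ of $-\cLou\phi=\nabla U$, componentwise and centered in $p$, which exists by the Poincaré inequality for $\kappa$ and uniform ellipticity of $D$.

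In the quadratic case $\phi(q,p)=D(q)p$. In general, one can write $\phi=D(q)\psi(p)$, where $\psi$ solves the $q$-independent problem $\nabla_p^*\nabla_p\psi=\beta\nabla U$; this identity needs checking.

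By Itô's formula applied to $\phi(q^\lambda_{\lambda t},p^\lambda_{\lambda t})$ in the rescaled time,
\begin{equation*}
\phi(q^\lambda_{\lambda t},p^\lambda_{\lambda t})-\phi(X_0,p_0)=\int_0^{t}\lambda\big(\cLham\phi+\lambda\cLou\phi\big)\,\d s+\text{martingale}.
\end{equation*}
Here $\d X^\lambda=\lambda\nabla U\,\d s$, so this gives
\begin{equation*}
X^\lambda_t=X_0+\int_0^t \cLham\phi(X^\lambda_s,p^\lambda_{\lambda s})\,\d s+M_t^\lambda-\frac1\lambda\big[\phi(X^\lambda_t,p^\lambda_{\lambda t})-\phi(X_0,p_0)\big].
\end{equation*}

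Next I compute $\cLham\phi=\nabla U^\top\nabla_q(D\psi)-D\nabla_p\psi\,\nabla V$. Its $\kappa$-average is $\Pi_0\cLham\phi=-D\nabla V+\frac1\beta\div D$, which is exactly the drift of \eqref{eq:overdamped_friction_dependent}. The noise-induced drift comes from $\kappa(\partial_iU\,\psi_j)=\frac1\beta\delta_{ij}$, obtained by integrating by parts. The martingale has quadratic variation $\frac2\beta\int_0^t (\nabla_p\phi)D^{-1}(\nabla_p\phi)^\top\,\d s$, with $\kappa$-average equal to $\frac2\beta D$.

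\textbf{Step 2: averaging the fluctuations.} The fluctuating parts $g=(\Id-\Pi_0)\cLham\phi$ and the analogous term in the quadratic variation have zero $\kappa$-mean. To control $\int_0^t g\,\d s$ in the fast time $\lambda s$, I again solve $-\cL_\lambda h=g$, which requires $g$ to be $\mu$-centered; this holds since $\Pi_0 g=0$. Lemma~\ref{lemma:hypocoercivity} then yields $\|h\|_{\wLmu}\lesssim\lambda^{-1}\|g\|$ for the rescaled generator $\lambda\cL_\lambda$. A second Itô argument then gives $\E_\mu\sup_t|\int_0^t g|^2=O(\lambda^{-1})$. The boundary term $\lambda^{-1}\phi$ is $O(\lambda^{-1/2})$ in $L^2(\mu)$, using stationarity and a Doob-type bound. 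All of these estimates are first derived under $\mu$, where stationarity gives uniform-in-time $L^2$ bounds thanks to the moment assumptions \eqref{eq:Ureg}.

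\textbf{Step 3: coupling and Gronwall.} For the multiplicative noise, I write $M^\lambda_t=\sqrt{2/\beta}\int_0^t D^{1/2}(X_s^\lambda)\,\d\widetilde W_s+\text{error}$. In the quadratic case $\nabla_p\phi D^{-1/2}=D^{1/2}$ is exact and $\widetilde W=W$; for general $U$ I only get equality in law, which suffices for time-marginals. Subtracting the SDE for $X$, using the Lipschitz continuity of the drift $D\nabla V-\beta^{-1}\div D$ and of $D^{1/2}$, and applying the BDG inequality together with Gronwall gives $\E_\mu\sup_{t\le T}|X^\lambda_t-X_t|^2\le C/\lambda$.

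Finally, for general $\mu_0$, Hölder's inequality gives $\E_{\mu_0}[Y^{\alpha}]\le\|\d\mu_0/\d\mu\|_{L^p(\mu)}\,\E_\mu[Y^{\alpha q}]^{1/q}$. With $\alpha q\le2$ and Jensen's inequality, the right-hand side is bounded by a constant times $\lambda^{-\alpha/2}$, which is \eqref{eq:convergence_in_l2}.

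\textbf{Main obstacle.} The main difficulty is Step 2. It requires the resolvent bound for $\cL_\lambda$ to be uniform in $\lambda$ with the right scaling, which is the role of Lemma~\ref{lemma:hypocoercivity}. It also requires the fluctuation terms, which are quadratic in $\nabla U$ and $\nabla_q D$, to lie in $L^2(\mu)$. These conditions are guaranteed by \eqref{eq:Ureg} and Assumption~\ref{hyp:elliptic}.
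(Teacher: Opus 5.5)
\textbf{The gap is in Step 3.} It comes from a one-line identity you missed in Step 1: your corrector is $D(q)p$ for \emph{every} $U$, not only quadratic ones. Since $\nabla_p p_j = e_j$ and $\nabla_p^2 p_j = 0$, one has $\cLou p = -D^{-1}(q)\nabla U(p)$, hence $\cLou\bigl(D(q)p\bigr) = -\nabla U(p)$ identically. Equivalently, the function $\psi$ in your ansatz $\phi = D(q)\psi(p)$ is simply $\psi(p) = p$, because $\nabla_p^*\nabla_p p_j = \partial_{p_j}^*\1 = \beta\,\partial_j U$ by \eqref{eq:adjoints}. This follows from the fluctuation--dissipation structure of $\cLou$, not from quadratic $U$. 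It gives $\nabla_p\phi\, D^{-1/2} = D^{1/2}$ exactly. The martingale term is therefore $\sqrt{2/\beta}\int_0^t D^{1/2}(X_s^\lambda)\,\d W_s$ with the \emph{same} Brownian motion $W$, and there is no fluctuating quadratic variation to average.

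As written, however, you conclude for general $U$ that you ``only get equality in law, which suffices for time-marginals'', and that is not enough. The estimate \eqref{eq:convergence_in_l2} bounds $\E_{\mu_0}|X_t^\lambda - X_t|^\alpha$ where $X$ is driven by the given $W$, so it is a statement about a specific coupling, not about marginals. A martingale that only agrees in law with $\sqrt{2/\beta}\int D^{1/2}(X_s^\lambda)\,\d\widetilde W_s$ would let you run Gr\"onwall only against a different solution $\widetilde X$ driven by $\widetilde W$. That yields at best a Wasserstein bound like \eqref{eq:wasserstein_bound}, not the theorem. Even that would require a martingale-representation argument and control of the square root of a fluctuating quadratic variation, neither of which you supply.

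Once you recognize $\phi = D(q)p$, your argument is the paper's. It\^o's formula for $D(q^\lambda)p^\lambda$ gives \eqref{eq:perturbed_overdamped}. Your fluctuation $g = (\Id - \Pi_0)\cLham\phi$ is exactly $D'(q)[\nabla U(p)]p - \beta^{-1}\div D(q)$ from \eqref{eq:remainder}. The Poisson equation \eqref{eq:poisson_equation} is handled with both estimates of Lemma~\ref{lemma:hypocoercivity}, and you conclude with Gr\"onwall and H\"older reweighting.

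One further point. You claim bounds on $\E_\mu \sup_{t\le T}$, and justify the boundary terms $\lambda^{-1}\phi(X_t^\lambda, p_{\lambda t}^\lambda)$ and $h(X_t^\lambda, p_{\lambda t}^\lambda)$ by stationarity and ``a Doob-type bound''. Doob's inequality does not apply to these terms, which are not martingales, and stationarity only controls them at a fixed $t$. The theorem only requires $\sup_t \E$, so keep the supremum outside the expectation, as the paper does, and use It\^o's isometry in place of BDG.
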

The condition~\eqref{eq:warm_start} on the initial data corresponds to a ``warm-start'' assumption, which could be relaxed to allow for Dirac initial conditions, at the cost of a dedicated analysis for small times, which will not be our concern here.
The convergence in mean estimate~\eqref{eq:convergence_in_l2} trivially implies the following bound in Wasserstein~$\cW_\alpha$-distance between the time-marginals of~$X^\lambda$ and those of~$X$ for any~${\alpha\in (0,2/q]}$:
\begin{equation}
    \label{eq:wasserstein_bound}
    \underset{0\leq t\leq T}{\sup}\, \cW_\alpha\left(\mathrm{Law}_{\P_{\mu_0}}(X_t^\lambda),\mathrm{Law}_{\P_{\mu_0}}(X_t)\right)\leq \frac{C'(T,\alpha,\mu_0)}{\sqrt{\lambda}}
\end{equation}
for some~$C'(T,\alpha,\mu_0)>0$ independent of~$\lambda$.

The following corollary upgrades the approximation of Theorem~\ref{thm:overdamped_limit} to a pathwise convergence property. We denote by~$X^\lambda_*\P_{\mu_0}$ and~$X_*\P_{\mu_0}$ the path distributions of~$X^\lambda$ and~$X$ respectively, namely the pushforward measures of~$\P_{\mu_0}$ by the maps~$X^\lambda$ and~$X$, which are elements of~$\cP\left(\mathcal C([0,T];\cX)\right)$.
\begin{corollary}
    \label{corr:pathwise}
 Assume that the hypotheses of Theorem~\ref{thm:overdamped_limit} hold, and additionally that Assumption~\ref{hyp:phys_kinetic} is satisfied. Then the following convergence of path distributions holds:
    \begin{equation}
    \label{eq:convergence_in_law}
    X^\lambda_* \P_{\mu_0}\xrightarrow[\lambda\to +\infty]{\rm{weakly}}X_*\P_{\mu_0}\in\mathcal P(\mathcal C([0,T];\cX)).
\end{equation}
\end{corollary}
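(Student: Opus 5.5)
The plan is to prove convergence in probability of $\sup_{0\le t\le T}|X^\lambda_t-X_t|$ to $0$. Both processes are driven by the same Brownian motion $W$ and start from the same $X_0$, and convergence in probability in $\mathcal C([0,T];\cX)$ implies the weak convergence~\eqref{eq:convergence_in_law}. Assumption~\ref{hyp:phys_kinetic} is what makes a direct pathwise argument possible, because $\nabla U(p)=M^{-1}p$ is linear. The strategy is to write $X^\lambda$ as an It\^o process with the same coefficients as~\eqref{eq:overdamped_friction_dependent}, up to a remainder that vanishes uniformly in time, and then to conclude with a Gronwall argument.

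\textbf{Step 1 (exact rewriting).} From~\eqref{eq:underdamped_friction_dependent} we have $\lambda M^{-1}p\,\d t=\lambda\,\d q$, so multiplying the momentum equation by $D(q)$ gives
\begin{equation*}
\lambda\,\d q_t^\lambda = -D(q_t^\lambda)\,\d p_t^\lambda - D(q_t^\lambda)\nabla V(q_t^\lambda)\,\d t+\sqrt{2\lambda/\beta}\,D^{1/2}(q_t^\lambda)\,\d W^\lambda_t .
\end{equation*}
Next we write $D(q)\,\d p=\d(D(q)p)-D'(q)[M^{-1}p]\,p\,\d t$. We integrate over $[0,\lambda t]$, divide by $\lambda$, and change time by $s=u/\lambda$. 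Since $W^\lambda_{\lambda s}=\sqrt\lambda W_s$, this yields
\begin{equation*}
X^\lambda_t = X_0-\int_0^t D\nabla V(X^\lambda_s)\,\d s+\int_0^t g(X^\lambda_s,p_{\lambda s}^\lambda)\,\d s+\sqrt{2/\beta}\int_0^t D^{1/2}(X^\lambda_s)\,\d W_s-\frac{D(X^\lambda_t)p^\lambda_{\lambda t}-D(X_0)p_0}{\lambda},
\end{equation*}
where $g(q,p)=D'(q)[M^{-1}p]\,p$. A Gaussian computation under $\kappa=\mathcal N(0,\beta^{-1}M)$ gives $\Pi_0 g=\frac1\beta\,\mathrm{div}\,D$. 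This identity is exactly the origin of the noise-induced drift.

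\textbf{Step 2 (averaging the fluctuation $g-\Pi_0g$).} The fluctuation $g-\Pi_0 g$ is a centered quadratic form in $p$ with $q$-dependent coefficients. Because $\cLou$ is, for fixed $q$, an Ornstein--Uhlenbeck operator in $p$, the equation $\cLou\psi=g-\Pi_0 g$ has an explicit solution of the form $\psi(q,p)=p^\top B(q)p-\Pi_0(p^\top B(q)p)$. Here $B$ solves a Lyapunov-type linear equation, and $B$ and $\nabla_q B$ are bounded by Assumption~\ref{hyp:elliptic}. Applying It\^o's formula to $\psi(q^\lambda_u,p^\lambda_u)$ on $[0,\lambda t]$ and rescaling time, the integral $\int_0^t (g-\Pi_0 g)\,\d s$ becomes a sum of three terms. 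The first is a boundary term $\lambda^{-1}[\psi]_0^{\lambda t}$. The second is $-\lambda^{-1}\int_0^{\lambda t}\cLham\psi\,\d u$, which is of order $t$ times $\lambda^{0}\cdot\lambda^{-1}\cdot\lambda=\bigo(1)$ a priori. This order needs care: $\cLham\psi$ is cubic in $p$, so I would solve a second corrector equation $\cLou\psi_2=\cLham\psi-\Pi_0\cLham\psi$. Here $\Pi_0\cLham\psi=0$ by parity of $\kappa$, so this term gains a further factor $\lambda^{-1}$. The third is a martingale whose quadratic variation over $[0,t]$ is $\bigo(\lambda^{-1})$ in expectation, so Doob's inequality makes it $\bigo(\lambda^{-1/2})$ uniformly on $[0,T]$.

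\textbf{Step 3 (uniform control of the remainders; main obstacle).} All boundary terms have the form $\lambda^{-1}\sup_{u\le\lambda T}(1+|p_u^\lambda|^k)$ for $k\le 3$. The hardest step is to show that these vanish in probability, since $\mu_0$ is only a warm start with density in $L^p(\mu)$. I would first bound $\E_{\mu_0}[|p^\lambda_u|^{2m}]$ uniformly in $u$ and $\lambda$ by H\"older's inequality against the stationary law $\mu$, using $\frac{\d\mu_0}{\d\mu}\in L^p(\mu)$ and invariance of $\mu$. I would then obtain the supremum over the interval by splitting $[0,\lambda T]$ into unit intervals. On each unit interval, $p$ is controlled by its value at the left endpoint plus a Gaussian-type fluctuation, through the linear drift $-\lambda D^{-1}M^{-1}p$ and BDG. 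A union bound then gives $\sup_{u\le\lambda T}|p_u^\lambda|=\bigo_{\P}(\lambda^{\epsilon})$ for any $\epsilon>0$, which is enough.

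\textbf{Step 4 (Gronwall).} We subtract the equation~\eqref{eq:overdamped_friction_dependent} for $X$. The coefficients $D\nabla V-\frac1\beta\mathrm{div}D$ and $D^{1/2}$ are globally Lipschitz by Assumptions~\ref{hyp:smoothness} and~\ref{hyp:elliptic}. Localizing with the stopping time at which the remainder first exceeds a threshold $\delta$, BDG and Gronwall give $\E\big[\sup_{t\le T\wedge\tau}|X^\lambda_t-X_t|^2\big]\le C_T\delta^2$. Combined with $\P(\tau<T)\to0$ from Steps 2--3, this yields convergence in probability of $\sup_{t\le T}|X^\lambda_t-X_t|$ to $0$, and hence~\eqref{eq:convergence_in_law}.
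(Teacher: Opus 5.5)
Your argument is sound in outline and follows a genuinely different route from the paper.

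\textbf{Comparison with the paper.} The paper never controls $\sup_{t\le T}|X^\lambda_t-X_t|$. It obtains convergence of finite-dimensional marginals from Theorem~\ref{thm:overdamped_limit}, whose remainder is handled through the hypocoercive Poisson equation of Lemma~\ref{lemma:hypocoercivity}. It then proves tightness with the moment criterion~\eqref{eq:tightness_criterion}, bounding $\E_{\mu_0}|X^\lambda_t-X^\lambda_s|^\gamma$ from the Duhamel representation of $M^{-1/2}p^\lambda$ with the fundamental matrix $\cR^\lambda$. The delicate term $I_3$ there requires the auxiliary process $Z^\lambda$ and an integration by parts.

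You instead use that, for quadratic $U$, $\cLou$ is an Ornstein--Uhlenbeck generator in $p$ for each fixed $q$. The corrector of $g-\Pi_0 g$ is therefore an explicit quadratic polynomial in $p$, and Lemma~\ref{lemma:hypocoercivity} is not needed at all. A stopped Gr\"onwall argument then gives $\sup_{t\le T}|X^\lambda_t-X_t|\to0$ in probability, with the same $W$. This buys you several things:
\begin{itemize}
\item It is strictly stronger than the weak convergence in the statement, and it is what the word ``pathwise'' in the introduction suggests.
\item It avoids both the Kolmogorov criterion and the $I_3$ analysis.
\item Once Step~2 is corrected as below, it even yields a rate $\lambda^{-1+\epsilon}$ in probability.
\item The localization also absorbs a point you share with the paper. $D\nabla V$ is not globally Lipschitz on $\R^d$ when $\nabla V$ is unbounded and $D$ is not constant (e.g.\ $V=|q|^2/2$, $D=(2+\sin q_1)\Id$). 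Stopping additionally when $X$ leaves a large ball only requires local Lipschitz constants.
\end{itemize}
The paper's route, for its part, keeps the analytic work inside Theorem~\ref{thm:overdamped_limit}, which holds for non-quadratic $U$, and uses Assumption~\ref{hyp:phys_kinetic} only for tightness.

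\textbf{Correction to Step~2.} Your orders of magnitude are off by a factor $\lambda$. With $\cLou\psi=g-\Pi_0 g$ and $\psi$ independent of $\lambda$, It\^o's formula on $[0,\lambda t]$ and $W^\lambda_{\lambda s}=\sqrt\lambda\,W_s$ give
\[
\int_0^t (g-\Pi_0 g)(X^\lambda_s,p^\lambda_{\lambda s})\,\d s=\frac{[\psi]_0^{\lambda t}}{\lambda^{2}}-\frac1\lambda\int_0^t \cLham\psi(X^\lambda_s,p^\lambda_{\lambda s})\,\d s-\frac1\lambda\sqrt{\frac2\beta}\int_0^t \nabla_p\psi(X^\lambda_s,p^\lambda_{\lambda s})^\top D(X^\lambda_s)^{-1/2}\,\d W_s .
\]
Consequently:
\begin{itemize}
\item The $\cLham\psi$ term is already $\bigo(\lambda^{-1})$ in $L^1(\P_{\mu_0})$, uniformly on $[0,T]$, by the reweighting argument.
\item The martingale has quadratic variation of order $\lambda^{-2}$.
\item No second corrector is needed.
\end{itemize}
The only remainders needing a supremum bound are then $\lambda^{-1}D(X^\lambda_t)p^\lambda_{\lambda t}$ and $\lambda^{-2}\psi(X^\lambda_t,p^\lambda_{\lambda t})$. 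So you only need $\lambda^{-1}\sup_{u\le\lambda T}|p^\lambda_u|\to0$ in probability.

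\textbf{Correction to Step~3.} BDG cannot be applied to the supremum of the Duhamel stochastic convolution, because it is not a martingale in its upper limit. This is the same issue the paper points out in~\cite{WSW24}. Moreover, $D^{-1}M^{-1}$ need not have a positive symmetric part, so $|p|$ itself is not contracted by the drift.

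Apply It\^o's formula to $U(p)^m$ instead. Since $-\lambda(M^{-1}p)^\top D^{-1}M^{-1}p\le0$, its drift is at most $C_m\lambda+mU^{m-1}|M^{-1}p||\nabla V|$. BDG applied to the genuine martingale part then yields $\E_{\mu_0}[\sup_{u\in[k,k+1]}U(p^\lambda_u)^m]\le C_m\lambda$ for $\lambda\ge1$. A union bound over the $\lceil\lambda T\rceil$ unit intervals, with $m$ large, gives $\sup_{u\le\lambda T}|p^\lambda_u|=\bigo_{\P}(\lambda^\epsilon)$ for every $\epsilon>0$. This is more than enough.
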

We finally give a straightforward corollary of Theorem~\ref{thm:overdamped_limit}, concerning the convergence of trajectory averages on finite time-invervals. Such averages are commonplace in sampling methods using the dynamics~\eqref{eq:underdamped_friction_dependent} or~\eqref{eq:overdamped_friction_dependent} to estimate expectations under Gibbs measure~$\nu$.
\begin{corollary}
    \label{corr:traj_averages}
    Assume that the hypotheses of Theorem~\ref{thm:overdamped_limit} hold with~$p\geq 2$, and let~$\eta\in\left[\frac{p}{2(p-1)},1\right]$, $r\in\left[1,\frac{2(p-1)}{p}\eta\right]$.
    
    Let~$T>0$ and~$\varphi\in \mathcal C^{0,\eta}(\cX)$, then it holds
    \begin{equation}
        \E_{\mu_0}\left[\left|\frac1T\int_0^T \varphi(X_s^\lambda)\,\d s - \frac1T\int_0^T \varphi(X_s)\,\d s\right|^r\right] = \bigo\left(\lambda^{-r/(2\eta)}\right) = \bigo\left(\lambda^{-1/2}\right)
    \end{equation}
    in the limit~$\lambda\to +\infty$.
\end{corollary}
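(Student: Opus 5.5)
The plan is to reduce the trajectory-average estimate to the time-marginal moment bound~\eqref{eq:convergence_in_l2} of Theorem~\ref{thm:overdamped_limit}, using the H\"older regularity of~$\varphi$ and Jensen's inequality in time. I will write~$[\varphi]_\eta$ for the H\"older seminorm, so that $|\varphi(x)-\varphi(y)|\leq [\varphi]_\eta|x-y|^\eta$.

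\textbf{Step 1 (Jensen in time).} Since $r\geq 1$, Jensen's inequality for the normalized measure $\d s/T$ on $[0,T]$ gives, pathwise,
\begin{equation*}
\left|\frac1T\int_0^T \left(\varphi(X_s^\lambda)-\varphi(X_s)\right)\d s\right|^r\leq \frac1T\int_0^T\left|\varphi(X_s^\lambda)-\varphi(X_s)\right|^r\d s\leq \frac{[\varphi]_\eta^r}{T}\int_0^T |X_s^\lambda-X_s|^{\eta r}\,\d s .
\end{equation*}

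\textbf{Step 2 (Fubini and the moment bound).} Taking $\E_{\mu_0}$ and using Fubini, the quantity to estimate is bounded by $[\varphi]_\eta^r\sup_{0\leq s\leq T}\E_{\mu_0}\bigl[|X_s^\lambda-X_s|^{\eta r}\bigr]$. Here $p\geq2$ and $q=p/(p-1)$, so $2/q=2(p-1)/p$. The hypotheses $r\leq \frac{2(p-1)}{p}\eta$ and $\eta\leq 1$ give
\begin{equation*}
\eta r\leq \frac r\eta\leq \frac2q .
\end{equation*}
Both exponents $\eta r$ and $r/\eta$ are therefore admissible values of~$\alpha$ in~\eqref{eq:convergence_in_l2}. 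Applying the theorem with $\alpha=\eta r$ yields a bound of order $\lambda^{-\eta r/2}$, uniformly on $[0,T]$.

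\textbf{Step 3 (reaching the stated exponent; the main obstacle).} The stated rate $\lambda^{-r/(2\eta)}$ is the one~\eqref{eq:convergence_in_l2} produces for $\alpha=r/\eta$. That $\alpha=r/\eta$ is exactly what the constraint $r\leq\frac{2(p-1)}{p}\eta$ makes admissible suggests this is the intended matching. The difficulty is that the H\"older modulus produces the moment $\E|X^\lambda_s-X_s|^{\eta r}$, not $\E|X^\lambda_s-X_s|^{r/\eta}$. Lyapunov's inequality
\begin{equation*}
\E\bigl[|Z|^{\eta r}\bigr]\leq \E\bigl[|Z|^{r/\eta}\bigr]^{\eta^2}
\end{equation*}
only returns $\lambda^{-\eta r/2}$ again. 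So this is where I expect the real work to lie.

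What I would try first is to avoid passing through the pointwise H\"older bound at the level of the difference. The idea is to exploit the scale separation inside $\frac1T\int_0^T$ directly, through a Poisson-equation argument for~$\cL_\lambda$ with the hypocoercive estimates. The obstruction is that such an argument needs differentiability of~$\varphi$, which a merely $\eta$-H\"older~$\varphi$ does not have. If that route does not close, the statement as written should be read with the exponent $\eta r/2$ that Steps~1--2 rigorously deliver.

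\textbf{Step 4 (the final $\bigo(\lambda^{-1/2})$).} The second equality in the statement is elementary once the exponent $r/(2\eta)$ is in hand. Since $r\geq1\geq\eta$, we have $r/(2\eta)\geq 1/2$, so for $\lambda\geq1$ one gets $\lambda^{-r/(2\eta)}\leq\lambda^{-1/2}$.
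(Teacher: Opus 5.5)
Your Steps~1--2 are exactly the paper's argument: convexity of $x\mapsto|x|^r$ in time, Tonelli, the H\"older modulus of $\varphi$, and then~\eqref{eq:convergence_in_l2} uniformly in $s\in[0,T]$. The only difference is the exponent at the H\"older step. The paper writes
\[
\E_{\mu_0}\bigl[|\varphi(X_s^\lambda)-\varphi(X_s)|^r\bigr]\leq \|\varphi\|_{\cC^{0,\eta}(\cX)}^{r/\eta}\,\E_{\mu_0}\bigl[|X_s^\lambda-X_s|^{r/\eta}\bigr]
\]
and applies Theorem~\ref{thm:overdamped_limit} with $\alpha=r/\eta$. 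This step is the sole origin of the rate $\lambda^{-r/(2\eta)}$. It is also the sole origin of the constraint $r\leq\frac{2(p-1)}{p}\eta$, which is just $r/\eta\leq 2/q$. That inequality does not follow from H\"older continuity. For $\eta<1$ it amounts to $|\varphi(x)-\varphi(y)|\leq C|x-y|^{1/\eta}$ with $1/\eta>1$. This forces $\nabla\varphi\equiv0$, i.e.\ $\varphi$ constant on the connected space $\cX$. So the obstacle you isolate in Step~3 is genuine, and the paper does not overcome it. You are not missing an idea, and there is no need to look for a Poisson-equation refinement in order to match the paper's proof.

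What this argument actually proves is your bound $\bigo(\lambda^{-\eta r/2})$. The exponent $\alpha=\eta r\leq r/\eta\leq 2/q$ is admissible, exactly as you checked. This rate coincides with the stated one only when $\eta=1$.

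The one thing to fix in your write-up is Step~4. It is conditional on the exponent $r/(2\eta)$, which you rightly could not establish. For the exponent $\eta r/2$, the conclusion $\bigo(\lambda^{-1/2})$ holds only when $\eta r\geq 1$. For instance, $p=\infty$, $\eta=\tfrac12$, $r=1$ is admissible in the statement. The statement then claims $\bigo(\lambda^{-1})$, whereas the argument gives only $\bigo(\lambda^{-1/4})$.

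So you should conclude with the rate $\lambda^{-\eta r/2}$ (under $\eta r\leq 2/q$, which the stated ranges imply). The second equality should be dropped or restricted to $\eta r\geq 1$.
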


Theorem~\ref{thm:overdamped_limit}, as well as Corollaries~\ref{corr:pathwise} and~\ref{corr:traj_averages} will be proven in Section~\ref{sec:proof_main}.

The key technical tool in the proof of Theorem~\ref{thm:overdamped_limit} is the following result.
\begin{lemma}[Uniform hypocoercivity in~$\wLmu$]
    \label{lemma:hypocoercivity}
    Let Assumptions~\ref{hyp:elliptic} and~\ref{hyp:UVreg} be satisfied.
    \begin{itemize}
    \item{For any~$\lambda>0$, the generator~$\cL_\lambda$ is invertible on~$L_0^2(\mu)$. Namely, for any~$\varphi\in\wLmuz$, the Poisson equation
    \begin{equation}
        \cL_\lambda f = \varphi
    \end{equation}
    has a unique solution~$f=\cL_\lambda^{-1}\varphi\in\wLmuz$.}
    \item{If~$\varphi\in (\mathrm{Id}-\Pi_0)L_0^2(\mu)$, it furthermore holds that for some constant~$C>0$,
    \begin{equation}
        \label{eq:hypocoercivity}
       \forall\,\lambda\geq 1,\qquad \|\cL_\lambda^{-1}\varphi\|_{L^2(\mu)}\leq C\|\varphi\|_{L^2(\mu)},\qquad \|\nabla_p \cL_\lambda^{-1}\varphi\|_{L^2(\mu)} \leq \frac{C}{\sqrt\lambda}\|\varphi\|_{L^2(\mu)}.
    \end{equation}
    }
    \end{itemize}
\end{lemma}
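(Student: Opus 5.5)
The plan is to follow the $L^2$-hypocoercivity framework of~\cite{DMS15,BFLS22}. Invertibility will come from an a priori estimate for a modified energy, and the $\lambda$-uniform bounds will exploit that $\varphi$ is orthogonal to the kernel of the symmetric part $\cLou$, which consists of functions of $q$ only.

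\textbf{Step 1: a priori estimate via the DMS functional.} Write $\cL_\lambda=\cLham+\lambda\cLou$ with $\cLham$ antisymmetric and $\cLou$ symmetric nonpositive on $\wLmu$. Since $U$ is even, $\Pi_0\cLham\Pi_0=0$. The Poincar\'e inequality for $\kappa$, applied fibrewise in $q$ together with the uniform ellipticity~\eqref{eq:elliptic}, gives microscopic coercivity:
\[
-\langle \cLou f,f\rangle=\tfrac1\beta\langle D^{-1}\nabla_pf,\nabla_pf\rangle\geq \tfrac{1}{\beta M_D}\|\nabla_p f\|^2\geq \tfrac{K_\kappa^2}{\beta M_D}\|(\Id-\Pi_0)f\|^2.
\]
Macroscopic coercivity $\|\cLham\Pi_0 f\|\gtrsim \|\Pi_0 f\|$ for centered $f$ follows from $\cLham\Pi_0 f=\nabla U\cdot\nabla_q\Pi_0 f$, the identity $\|\nabla U\cdot v\|_{L^2(\kappa)}^2=\beta^{-2}\, v^\top\kappa(\nabla^2U)v$, and the Poincar\'e inequality for $\nu$. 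This requires $\kappa(\nabla^2 U)$ to be positive definite; it equals $\beta\,\kappa(\nabla U\nabla U^\top)$, which is positive definite since $\nabla U$ is nondegenerate. Introduce
\[
B=(\Id+(\cLham\Pi_0)^*\cLham\Pi_0)^{-1}(\cLham\Pi_0)^*.
\]
The standard auxiliary bounds $\|B\|,\|B\cLham(\Id-\Pi_0)\|\leq C$ require elliptic regularity for $\Id+\beta^{-2}\nabla_q^*\kappa(\nabla^2U)\nabla_q$ on $L^2(\nu)$, and this is exactly where~\eqref{eq:Vreg} and~\eqref{eq:Ureg} enter, as in~\cite[Assumptions 3.1--3.2]{BFLS22}. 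The remaining term $B\cLou$ has to be controlled. Because $D$ depends on $q$, $B\cLou$ does not vanish as it would for constant friction. Using $\Pi_0\nabla_p^*=0$ and $\cLou=-\beta^{-1}\nabla_p^*D^{-1}\nabla_p$, one finds $(\cLham\Pi_0)^*\cLou f=-\Pi_0\cLham\cLou f$. Computing $\Pi_0[\nabla U\cdot\nabla_q\nabla_p^*D^{-1}\nabla_p f]$ by integrating by parts in $p$, this can be bounded by $C(\|\nabla_pf\|+\ldots)$, with $D^{-1}$ and $\nabla D^{-1}$ bounded by Assumption~\ref{hyp:elliptic}. Concretely, I expect a bound of the form $\|B\cLou f\|\leq C\|\nabla_p f\|$ obtained by duality.

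\textbf{Step 2: the resolvent estimates.} Let $f\in\wLmuz$ with $\cL_\lambda f=\varphi$. Testing against $f+\varepsilon(B+B^*)f$ gives
\[
\tfrac{\lambda}{\beta M_D}\|\nabla_pf\|^2+\varepsilon c\|\Pi_0 f\|^2\leq \langle\varphi,f\rangle+\varepsilon\langle\varphi,(B+B^*)f\rangle+\varepsilon C(1+\lambda)\|\nabla_p f\|\,\|f\|.
\]
For $\lambda\geq1$ take $\varepsilon$ small and fixed. The cross term is absorbed as follows:
\[
\varepsilon C\lambda\|\nabla_pf\|\,\|\Pi_0 f\|\leq \tfrac{\varepsilon c}{2}\|\Pi_0f\|^2+C'\varepsilon\lambda^2\|\nabla_pf\|^2.
\]
This appears to cost $\lambda^2$, which is the main obstacle. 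To avoid it, exploit $\varphi=(\Id-\Pi_0)\varphi$. Write $\langle\varphi,f\rangle=\langle\varphi,(\Id-\Pi_0)f\rangle\leq K_\kappa^{-1}\|\varphi\|\,\|\nabla_pf\|$. For $B\cLou$, use the sharper structure $B\cLou=B\cLou(\Id-\Pi_0)$ and choose $\varepsilon=\varepsilon_0/\lambda$, rebalancing so that $\varepsilon\lambda$ stays bounded. With $\varepsilon\lambda\leq c_0$, the cross term $\varepsilon\lambda\|\nabla_p f\|\|\Pi_0 f\|$ is then controlled by $\tfrac{\varepsilon c}{2}\|\Pi_0 f\|^2+C\varepsilon\lambda^2\|\nabla_pf\|^2$, and $\varepsilon\lambda^2\sim\lambda$ is absorbed by $\lambda\|\nabla_pf\|^2$. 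The resulting inequality reads
\[
\lambda\|\nabla_pf\|^2+\lambda^{-1}\|\Pi_0 f\|^2\lesssim \|\varphi\|\,\|\nabla_pf\|+\lambda^{-1}\langle\varphi,(B+B^*)f\rangle.
\]
Here I would additionally use that $B^*\varphi=\cLham\Pi_0(\ldots)$ and $B\varphi=B(\Id-\Pi_0)\varphi$. Writing $(\Id-\Pi_0)\varphi=\cLou g$ via Poincar\'e for $\kappa$, the term $\langle\varphi,Bf\rangle$ can be bounded by $\lambda^{-1}$ times macroscopic quantities of the form $\langle \cL_\lambda f-\cLham f,\ldots\rangle$. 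The cleanest route is to use the corrector: set $g=\cLou^{-1}\varphi$, solvable fibrewise with $\|g\|_{H^1(\kappa)}\lesssim\|\varphi\|$. Then $\langle\varphi,\Pi_0 f\rangle=0$, so only $(\Id-\Pi_0)f$ enters, and the estimate closes to give $\|\nabla_pf\|\lesssim\lambda^{-1/2}\|\varphi\|$, $\|(\Id-\Pi_0)f\|\lesssim\lambda^{-1/2}\|\varphi\|$, and $\|\Pi_0 f\|\lesssim\|\varphi\|$. That last bound requires $\langle\varphi,Bf\rangle$ to be $\mathcal O(\|\varphi\|\,\|\nabla_pf\|)$ uniformly, which I would prove via $B^*\varphi$ being bounded by $\|\nabla_p$-type norms of $\varphi$ in $H^{-1}$. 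Verifying this uniform-in-$\lambda$ balancing is the step I expect to be hardest.

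\textbf{Step 3: existence.} The a priori bound shows that $\cL_\lambda$ is injective with closed range on $\wLmuz$. The same estimate for the adjoint $\cL_\lambda^*=-\cLham+\lambda\cLou$, which has the same structure, gives dense range. Hypoellipticity (Assumption~\ref{hyp:hypoellipticity}) justifies the manipulations on core functions. Hence $\cL_\lambda$ is invertible on $\wLmuz$, and the bounds of Step 2 give~\eqref{eq:hypocoercivity}.
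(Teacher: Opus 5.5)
Your modified-energy route (DMS with $\varepsilon=\varepsilon_0/\lambda$) is genuinely different from the paper's, and it does go through. However, you stopped one line short. The step you flag as the hardest is immediate, and the corrector/$H^{-1}$ detour is not needed. Once you have
\[
\lambda\|\nabla_pf\|^2+\lambda^{-1}\|\Pi_0f\|^2\lesssim\|\varphi\|\,\|\nabla_pf\|+\lambda^{-1}\bigl|\langle\varphi,(B+B^*)f\rangle\bigr|,
\]
it suffices to use the crude bound $|\langle\varphi,(B+B^*)f\rangle|\leq\|\varphi\|\,\|f\|\leq\|\varphi\|\bigl(\|\Pi_0f\|+K_\kappa^{-1}\|\nabla_pf\|\bigr)$, recalling $\|B\|\leq\frac12$. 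Young's inequality then gives $\lambda\|\nabla_pf\|^2+\lambda^{-1}\|\Pi_0f\|^2\lesssim\lambda^{-1}\|\varphi\|^2$, because the prefactor $\lambda^{-1}$ of the $B$-term is exactly the weight of $\|\Pi_0f\|^2$ on the left. The only place where $\Pi_0\varphi=0$ is used is $\langle\varphi,f\rangle=\langle\varphi,(\Id-\Pi_0)f\rangle$, which you already had.

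In fact $B^*\varphi=\cA\Pi_0(\Id+(\cA\Pi_0)^*\cA\Pi_0)^{-1}\varphi=0$ for such $\varphi$, since this resolvent acts as the identity on $(\Id-\Pi_0)\wLmuz$. So the requirement you state on $\langle\varphi,Bf\rangle$ holds trivially.

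Your expected bound $\|B\cS f\|\leq C\|\nabla_pf\|$ is correct. Indeed $\Pi_0\cA\cS f=\beta^{-1}\nabla_q^*\Pi_0[\nabla^2U\,D^{-1}\nabla_pf]$, so by duality through the resolvent only $\|D^{-1}\|_{L^\infty}$ and $\nabla^2U\in L^2(\kappa)$ enter, with no derivatives of $D$. Note that this term is nonzero already for constant friction. Also, with your sign of $B$ one has $B\cA\Pi_0\geq 0$, so the multiplier should be $f-\varepsilon(B+B^*)f$.

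The paper does not use a modified energy. It verifies the abstract assumptions of \cite{BFLS22} while tracking the $\lambda$-dependence of the constants: $s\propto\lambda$, and the operators $\lambda\Pi_1\cS\Pi_1$ and $\lambda\Pi_2\cS\cA\Pi_0(\cA_{\subplus 0}^*\cA_{\subplus 0})^{-1}+\Pi_2\cA^2\Pi_0(\cA_{\subplus 0}^*\cA_{\subplus 0})^{-1}$ have norm $\bigo(\lambda)$. It then reads the uniform bound off the Schur-complement block form of $\cL_\lambda^{-1}$. For $\varphi\in\ker\Pi_0$ only the right column acts, and its blocks are bounded by $2/s$ and $\sqrt{\|\mathfrak S_{0,\lambda}^{-1}\|/s}$, both $\bigo(1)$. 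The $\nabla_p$ estimate then follows from the energy identity. That route inherits invertibility for every $\lambda>0$, and the functional-analytic setting, directly from \cite{BFLS22}.

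Your route is more self-contained: only $B$, $\cA B$, $B\cA(\Id-\Pi_0)$ and $B\cS$ must be bounded, and only the last involves $D$. It also gives the sharper bound $\|\nabla_pf\|+\|(\Id-\Pi_0)f\|\lesssim\lambda^{-1}\|\varphi\|$. The paper's $\lambda^{-1/2}$ comes from bounding $\langle\varphi,f\rangle$ by $\|\varphi\|\,\|f\|$ rather than by $\|\varphi\|\,\|(\Id-\Pi_0)f\|$.

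The price is your Step 3. There you need the general fixed-$\lambda$ DMS estimate (for $\varphi\notin\ker\Pi_0$ and all $\lambda>0$), the same estimate for $\cL_\lambda^*=-\cA+\lambda\cS$, and a core/closure argument. Hypoellipticity, which is not a hypothesis of the lemma, plays no role there.
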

The main interest of this result, beyond confirming that the dynamics~\eqref{eq:underdamped_friction_dependent} indeed satisfies the~$L^2$-hypocoercivity property, is the uniformity of the estimates in~\eqref{eq:hypocoercivity}.
Whereas standard hypocoercive estimates only give~$\|\cL_\lambda^{-1}\varphi\|_{\wLmu} = \bigo(\max(\lambda,\lambda^{-1}))\|\varphi\|_{\wLmu}$ for general~$\varphi\in\wLmuz$, this bound can be improved whenever~$\varphi\in \ker\Pi_0$. 
The proof of Lemma~\ref{lemma:hypocoercivity}, which follows the algebraic viewpoint of~\cite{BFLS22}, is given in Section~\ref{sec:proofs} below.

\section{Proofs}
\label{sec:proof_main}
We now prove our main results. To prove Theorem~\ref{thm:overdamped_limit}, we will establish the result for~$\mu_0=\mu$ and~$p=2$, and conclude for more general initial distributions~$\mu_0$ by a simple reweighting argument. This will in particular establish the weak convergence of all finite-dimensional time marginals of~$X^\lambda$ towards those of~$X$.
To prove Corollary~\ref{corr:pathwise}, we exploit the specific form of the dynamics under Assumption~\eqref{hyp:phys_kinetic} to obtain the tightness of the family of~$\left(X^\lambda_*\P_{\mu_0}\right)_{\lambda\geq 1}$ in~$\mathcal P(\mathcal C([0,T];\cX))$, which will conclude the argument by a classical corollary of Prokhorov's theorem (see~\cite[Theorem 8.1]{B68}).

The first step is a simple computation with It\^o's formula, from which the ``noise-induced drift'' term appears naturally.
\paragraph{Proof of Theorem~\ref{thm:overdamped_limit}: a first It\^o computation.}
For any~$\lambda>0$, since the quadratic covariation $\left\langle q^\lambda\right\rangle$ in the position variable vanishes, it holds
\begin{equation}
    \d\left[D(q_t^\lambda)p_t^\lambda\right] = -D(q_t^\lambda)\nabla V(q_t^\lambda)\,\d t - \lambda\,\d q_t^\lambda + \sqrt{\frac{2\lambda}{\beta}}D^{1/2}(q_t^\lambda)\,\d W_t^\lambda + D'(q_t^\lambda)\left[\nabla U(p_t^\lambda)\right]p_t^\lambda\d t,
\end{equation}
where we recall the definition~\eqref{eq:diff_notation} of~$D'$. In particular, for any~${1\leq k\leq d}$,
\begin{equation}
    \label{eq:drift_term}
    \left(D'(q)\left[\nabla U(p)\right]p \right)_k = \sum_{i,j=1}^d \partial_i D_{kj}(q)\partial_i U(p)p_j.
\end{equation}

Isolating~$\d q_t^\lambda$ and integrating over~$[0,\lambda t]$, we get:
    \begin{equation}
        \label{eq:ipp_sto}
        \begin{aligned}
        q_{\lambda t}^\lambda- q_0^\lambda &= -\frac{1}{\lambda}\int_0^{\lambda t}D(q_s^\lambda)\nabla V(q_s^\lambda)\,\d s+ \sqrt{\frac{2}{\lambda\beta}}\int_0^{\lambda t}D^{1/2}(q_s^\lambda)\,\d W_s^\lambda\\
        &+ \frac{1}{\lambda}\int_0^{\lambda t}D'(q_s^\lambda)\left[\nabla U(p_s^\lambda)\right]p_s^\lambda\,\d s+\frac{D(q_0^\lambda)p_0^\lambda - D(q_{\lambda t}^\lambda)p_{\lambda t}^\lambda}{\lambda}.
        \end{aligned}
    \end{equation}

Defining~$X^\lambda_t = q_{\lambda t}$, this time-rescaling changes~\eqref{eq:ipp_sto} into
\begin{equation}
    \label{eq:perturbed_overdamped}
    X_t^\lambda = X_0 -\int_0^t \left[D(X_s^\lambda)\nabla V(X_s^\lambda)-\frac1\beta\,\mathrm{div} D(X_s^\lambda)\right]\,\d s + \sqrt{\frac{2}{\beta}}\int_0^t D^{1/2}(X_s^\lambda)\,\d W_s + R(t,\lambda),
\end{equation}
where we introduced the remainder term
\begin{equation}
    \label{eq:remainder}
    R(t,\lambda) = \frac{D(q_0^\lambda)p_0^\lambda-D(q_{\lambda t}^\lambda)p_{\lambda t}^\lambda}{\lambda} + \frac1\lambda\int_0^{\lambda t} \psi(q_s^\lambda,p_{s}^\lambda)\,\d s,\qquad \psi(q,p) = D'(q)\left[\nabla U(p)\right] p - \frac1\beta \mathrm{div} D(q),
\end{equation}
and use~$X_0^\lambda=q_0^\lambda=X_0$.

Since~$p\in L^2(\kappa)$,~$U\in H^1(\kappa)$ and~$D\in W^{1,\infty}(\kappa)$ (from Assumptions~\ref{hyp:elliptic} and~\ref{hyp:UVreg}), an integration by parts shows that component-wise:
\begin{equation}
    \Pi_0 \left(D'(q)\left[\nabla U(p)\right] p \right) = \frac1\beta\mathrm{div}D(q),
\end{equation}
so that~$\psi \in \left((\Id-\Pi_0)\wLmu\right)^d \subset \wLmuz^d$. Indeed, from~\eqref{eq:diff_notation} and the form~\eqref{eq:gibbs} for~$\kappa$, for any~${1\leq k\leq d}$,
\begin{equation}
    \label{eq:ipp_fonda}
    \begin{aligned}
    \int_{\R^d}\left(D'(q)\left[\nabla U(p)\right] p \right)_k\,\kappa(\d p) &= \sum_{i,j=1}^d \int_{R^d}\partial_{i}D_{kj}(q)(\partial_i U(p))p_j\,\kappa(\d p)\\
    &=\frac{1}{\beta}\sum_{i,j=1}^d\partial_i D_{kj}(q)\int_{\R^d}(\partial_i p_j)\,\kappa(\d p)\\
    &=\frac{1}{\beta}\sum_{i,j=1}^d \delta_{ij}\partial_i D_{kj}(q)\\
    &=\frac{1}{\beta}\left(\div\,D(q)\right)_k.
    \end{aligned}
\end{equation}
In view of the form~\eqref{eq:overdamped_friction_dependent} of the limiting equation, we interpret~$R(\cdot,\lambda)$ as a perturbation term.

Using the first item in Lemma~\ref{lemma:hypocoercivity}, we introduce, for each~$\lambda\geq 1$, the solution $\Phi_\lambda\in \wLmu^d$ to the (component-wise) Poisson equation:
\begin{equation}
    \label{eq:poisson_equation}
    \cL_\lambda\Phi_{\lambda} = \psi.
\end{equation}
Since~$\psi$ is smooth and~$\cL_\lambda$ is hypoelliptic under Assumption~\eqref{hyp:hypoellipticity},~$\Phi_\lambda$ is smooth, and we may write the integral in~\eqref{eq:remainder} as 
\begin{equation}
    \label{eq:remainder_integral}
    \frac1\lambda\int_0^{\lambda t} \psi(q_s^\lambda,p_{ s}^\lambda)\,\d s = \frac{\Phi_\lambda(X_t^\lambda,p_{\lambda t}^\lambda) - \Phi_\lambda(X_0,p_0^\lambda)}{\lambda} - \sqrt{\frac{2}{\beta}}\int_0^t \nabla_p \Phi_{\lambda}(X_s^\lambda,p_{\lambda s}^\lambda)^\top D(X_s^\lambda)^{-1/2}\,\d W_s,
\end{equation}
using It\^o's formula and making the same rescaling in time as the one leading to~\eqref{eq:perturbed_overdamped}.
Using these estimates, the proof of the first item~\eqref{eq:convergence_in_l2} in Theorem~\ref{thm:overdamped_limit} follows from a Gr\"onwall-type argument, which we now detail.

\paragraph{Proof of Theorem: proof of~\eqref{eq:convergence_in_l2}.}
Fix~$T>0$ and assume as a first step that~$\mu_0=\mu$. By H\"older's inequality, it suffices to show the estimate~\eqref{eq:convergence_in_l2} for~$\alpha=2$. By stationarity,~$X_t,X_t^\lambda\sim \nu$ and~$p_{\lambda t}^\lambda\sim\kappa$ for all~$0\leq t\leq T$ under~$\P_\mu$.
Note also that, since~$\nabla_p\Phi_\lambda$ is in~$\wLmu$ component-wise from the second item in Lemma~\ref{lemma:hypocoercivity}, the local martingale on the right-hand side of~\eqref{eq:remainder_integral} is an~$L^2(\P_\mu)$ $W$-martingale.

We denote the difference process by
\begin{equation}
    E_t^\lambda := X_t^\lambda - X_t,
\end{equation}
which has finite second-moments by the first item in Assumption~\ref{hyp:UVreg} under~$\P_\mu$, and write, for~${0\leq t \leq T}$,
\begin{equation}
    \label{eq:young_inequality}
        |E_t^\lambda|^2 \leq 3 \left[\left|\int_0^t \left[b(X_s^\lambda)-b(X_s)\right]\,\d s\right|^2+\left|\int_0^t \left[\sigma(X_s^\lambda)-\sigma(X_s)\right]\,\d W_s\right|^2 + \left|R(t,\lambda)\right|^2\right],
\end{equation}
where~$b(x) = -D(x)\nabla V(x) + \frac1\beta\div\, D(x)$ and~$\sigma(x) = \sqrt{\frac2\beta}D(x)^{1/2}$. We estimate each of these terms separately. Let~$L_b,L_\sigma>0$ be Lipschitz constants for~$b$ and~$\sigma$ (in the norm~$\left\|\cdot\right\|_{\mathrm{HS}}$ for $\sigma$), which exist by Assumptions~\ref{hyp:smoothness} and~\ref{hyp:elliptic}.
Then the first two terms inside the bracket on the right side of~\eqref{eq:young_inequality} are bounded in expectation by
\begin{equation}
    \label{eq:lipschitz_bound}
    (tL_b^2+L_\sigma^2)\int_0^t \E_\mu\left[|E_s^\lambda|^2\right]\,\d s,
\end{equation}
 using It\^o's isometry to bound the second term. To control the remainder, we write, using~\eqref{eq:remainder} and~\eqref{eq:remainder_integral}
 \begin{equation}
    \begin{aligned}
    \E_\mu\left[\left|R(t,\lambda)\right|^2\right] &\leq 5\left(\frac{2}{\lambda^2}\left[\|D p\|^2_{\wLmu}+\|\Phi_\lambda\|^2_{\wLmu}\right] + \frac{2}{\beta}\E_\mu\left[\left|\int_0^t \nabla_p \Phi_\lambda(X_s^\lambda,p_{\lambda s}^\lambda)^\top D(X_s^\lambda)^{-1/2}\,\d W_s\right|^2\right]\right)\\
    &\leq 5\left(\frac{2}{\lambda^2}\left[\|D p\|^2_{\wLmu}+\|\Phi_\lambda\|^2_{\wLmu}\right] + \frac{2T}{\beta}\|D^{-1}\|_{L^\infty(\cX;\cSpd)}\|\nabla_p\Phi_\lambda\|^2_{\wLmu,\mathrm{HS}}\right),
    \end{aligned}
 \end{equation}
using once again It\^o's isometry in the second line. We may now use the two estimates of Lemma~\ref{lemma:hypocoercivity} to obtain
\begin{equation}
    \label{eq:l2_bounds}
    \E_\mu\left[\left|R(\lambda,t)\right|^2\right] \leq \frac{C_T}{\lambda}
\end{equation}
for some constant~$C_T>0$ independent of~$\lambda$. Collecting the estimates~\eqref{eq:young_inequality},~\eqref{eq:lipschitz_bound} and~\eqref{eq:l2_bounds}, we get, taking expectations,
\begin{equation}
    \E_\mu\left[|E_t^\lambda|^2\right] \leq 3\left(TL_b^2+L_\sigma^2\right)\int_0^t\E_\mu\left[|E_s^\lambda|^2\right]\,\d s + \frac{3C_T}{\lambda},
\end{equation}
from which Gr\"onwall's lemma gives
\begin{equation}
    \underset{0\leq t\leq T}{\sup}\,\E_\mu\left[|E_t^\lambda|^2\right] \leq \frac{3C_T}{\lambda}\e^{3T(TL_b^2+L_\sigma^2)},
\end{equation}
which concludes the proof of~\eqref{eq:convergence_in_l2} for~$\mu_0=\mu$ and~$\alpha=2$.

To extend the argument to~$\mu_0$, we simply reweight the initial condition. Let~$0<\alpha\leq 2/q$. We write, by H\"older's inequality,
\begin{equation}
    \label{eq:conditioning_argument}
    \begin{aligned}
    \E_{\mu_0}\left[|E_t^\lambda|^\alpha\right] =\E_{\mu}\left[|E_t^\lambda|^\alpha \frac{\d\mu_0}{\d \mu}(X_0,p_0)\right]\leq \E_\mu\left[|E_t^\lambda|^{q\alpha}\right]^{1/q}\left\|\frac{\d \mu_0}{\d \mu}\right\|_{L^p(\mu)}\leq \E_\mu\left[|E_t^\lambda|^2\right]^{\alpha/2}\left\|\frac{\d \mu_0}{\d \mu}\right\|_{L^p(\mu)},
    \end{aligned}
\end{equation}
from which the proof of~\eqref{eq:convergence_in_l2} follows from the case~$(\mu_0,\alpha)=(\mu,2)$.

\paragraph{Proof of Corollary~\ref{corr:pathwise}: tightness.}
We now show that the pathwise convergence~\eqref{eq:convergence_in_law} holds under the further Assumption~\ref{hyp:phys_kinetic}.
Since all finite-dimensional marginals of~$X^\lambda$ converge weakly to those of~$X$ by the estimate~\eqref{eq:convergence_in_l2}, it is sufficient, in order to conclude, to prove tightness for the family of pushforward measures~$(X^\lambda_* \P_{\mu_0})_{\lambda\geq 1}$ on~$\mathcal C([0,T];\cX)$.
We use the following criterion, which is an immediate corollary of~\cite[Theorem 12.3]{B68}. If the family of initial laws~$\left(\left[X_0^\lambda\right]_*\P_{\mu_0}\right)_{\lambda\geq 1}$ is tight in~$\cP(\cX)$, and there exist real numbers~$a,b,C>0$ such that
\begin{equation}
    \label{eq:tightness_criterion}
    \forall\,\lambda>0,\,0\leq s<t\leq T,\qquad \E_{\mu_0}\left[\left|X_s^\lambda-X_t^\lambda\right|^a\right]\leq C(t-s)^{1+b},
\end{equation}
then~$(X^\lambda_*\P_{\mu_0})_{\lambda\geq 1}$ is tight in~$\cP(\cC([0,T];\cX))$. Since~$\left[X_0^\lambda\right]_*\P_{\mu_0}=\mu_0$ for all~$\lambda$, it is sufficient to control moments of the form~\eqref{eq:tightness_criterion}.

Fix~$0\leq s< t \leq T$. The first step is to write the velocity~$M^{-1}p_t^\lambda$ in a more explicit form. We recall that under Assumption~\ref{hyp:phys_kinetic},~$U(p)=\frac12 p^\top M^{-1}p$ for some constant~$M\in\cSpd$. We view, in the spirit of~\cite{BHVW17,WSW24}, the equation
\begin{equation}
    \begin{aligned}
        \d(M^{-1/2}p^\lambda)_t = &-M^{-1/2}\nabla V(q_t^\lambda)\,\d t - \lambda\left(M^{-1/2}D(q_t^\lambda)^{-1}M^{-1/2}\right)M^{-1/2}p_t^\lambda\,\d t \\
        &+ \sqrt{\frac{2\lambda}{\beta}}M^{-1/2}D(q_t^\lambda)^{-1/2}\,\d W_t 
    \end{aligned}
\end{equation}
as a time-dependent linear ODE with a source term.
To solve this equation, we introduce the matrix of fundamental solutions to the associated homogeneous problem, namely the~$\cSpd$-valued process defined pathwise by the ODE:
\begin{equation}
    \label{eq:fundamental_matrix_ode}
    \d\cR^\lambda_t = -\lambda M^{-1/2} D(q_t^\lambda)^{-1}M^{-1/2}\cR^\lambda_t\,\d t,\qquad \cR^\lambda_0 = \Id.
\end{equation}
We will use the shorthand~$D_M$ for the matrix field appearing in~\eqref{eq:fundamental_matrix_ode}, i.e.
\begin{equation}
D_M(q) = M^{-1/2}D(q)^{-1}M^{-1/2}.
\end{equation}
Note that~$D_M$ also satisfies Assumption~\eqref{hyp:elliptic}, so that the existence, uniqueness and~$W$-adaptiveness of~$\cR_\lambda$ on~$[0,\lambda T]$ all follow from the general theory of linear ODEs with continuous coefficients, see for instance~\cite[Section 3.4]{T00}.

Using Duhamel's principle, we may write
\begin{equation}
    \begin{aligned}
        M^{-1/2}p^\lambda_t = &\cR_{t}^\lambda M^{-1/2}p_0^\lambda - \cR_t^\lambda\int_0^t (\cR_{s}^\lambda)^{-1}M^{-1/2}\nabla V(q_s^\lambda)\,\d s\\
        &+\sqrt{\frac{2\lambda}{\beta}}\cR_t^\lambda\int_0^t (\cR_{s}^\lambda)^{-1} M^{-1/2}D(q_s^\lambda)^{-1/2}\,\d W^\lambda_s.
    \end{aligned}
\end{equation}
The validity of this expression may be checked a posteriori by applying Itô's product rule to~$(\cR_t^\lambda)^{-1}M^{-1/2}p_t^\lambda$, and using the expression
\begin{equation}
    \label{eq:matrix_diff_expr}
    \d\left(\cR_t^\lambda\right)^{-1} = \lambda \left(\cR_t^\lambda\right)^{-1}D_M(q_t^\lambda)\,\d t,
\end{equation}
which itself is a consequence the well-known inverse matrix derivative identity ${\partial_t (A(t)^{-1}) = - A(t)^{-1}\partial_t A(t) A(t)^{-1}}$.

Left-multiplying~$M^{-1/2}p_t$ by~$M^{-1/2}$ and integrating over~$[\lambda s,\lambda t]$, it follows, according to the SDE~\eqref{eq:underdamped_friction_dependent}, that the time-rescaled position increments can be written as
    \begin{equation}
        \begin{aligned}
        X_t^\lambda - X_s^\lambda = \int_{\lambda s}^{\lambda t}M^{-1}p_r^\lambda\,\d r = M^{-1/2}\left[\int_{\lambda s}^{\lambda t}\left(\cR_{r}^\lambda M^{-1/2}p_0-\cR_r^\lambda\int_0^r (\cR_{u}^\lambda)^{-1}M^{-1/2}\nabla V(q_u^\lambda)\,\d u\right.\right.\\\left.\left.+\sqrt{\frac{2\lambda}{\beta}}\cR^\lambda_r\int_0^r \left(\cR_{u}^{\lambda}\right)^{-1}M^{-1/2} D(q_u^\lambda)^{-1/2}\,\d W^\lambda_u\right)\,\d r\right],
    \end{aligned}
\end{equation}
which we split into three summands as~$M^{-1/2}\left[I_1(s,t)+I_2(s,t)+I_3(s,t)\right]$.

Take~$\gamma> 1$. Using the bound~$|X_t^\lambda-X_s^\lambda|^\gamma\leq \|M^{-1/2}\|_{\mathrm{op}}^\gamma 3^{\gamma-1}(|I_1(s,t)|^\gamma + |I_2(s,t)|^\gamma + |I_3(s,t)|^\gamma)$, it remains to control the three integral terms separately, seeking uniform-in-$\lambda$ estimates for their~$\P_{\mu_0}$-expectation.
We use the inequality:
\begin{equation}
    \label{eq:fundamental_matrix_bound}
    \forall\,0\leq s\leq t\leq T,\qquad \left\|\cR_t^\lambda\left(\cR_{s}^\lambda\right)^{-1}\right\|\leq \e^{-\lambda\varepsilon_{D,M}(t-s)}\qquad \P_{\mu_0}\text{-almost surely},
\end{equation}
where~$\|\cdot\|$ is the standard Euclidean operator norm on~$\R^d$, for some~$\varepsilon_{D,M}>0$ independent of~$\lambda$.
This identity follows from the uniform bound~\eqref{hyp:elliptic}, upon applying Gr\"onwall's inequality pathwise, to~$|v(t)|^2 = \left|\cR_t^\lambda (\cR_s^\lambda)^{-1}v_0\right|^2$, where~$v$ solves the ODE:
\begin{equation}
\frac{\d}{\d t}v(t) = -\lambda D_M (q_t^\lambda)v(t),\qquad v(s) = v_0,
\end{equation}
for any~$v_0\in\R^d$ (see also~\cite[Problem 3.31]{T00}).

Applying~\eqref{eq:fundamental_matrix_bound} to~$I_1$, we get the following almost sure inequality:
\begin{equation}
    \left|I_1(s,t)\right|^\gamma \leq |M^{-1/2}p_0|^\gamma\left(\int_{\lambda s}^{\lambda t} \e^{-\lambda\varepsilon_{D,M} r}\,\d r\right)^\gamma,
\end{equation}
which gives, upon taking the expectation with respect to~$\mu_0$,
\begin{equation}
    \label{eq:i1_estimate}
    \begin{aligned}
    \E_{\mu_0}\left[\left|I_1(s,t)\right|^\gamma\right] &\leq\frac{\mu_0(|M^{-1/2}p_0|^\gamma)}{\lambda^\gamma \varepsilon_{D,M}^\gamma}[\e^{-\lambda^2\varepsilon_{D,M} s}-\e^{-\lambda^2\varepsilon_{D,M} t}]^\gamma\\
    &= \frac{\mu_0(|M^{-1/2}p_0|^\gamma)}{\lambda^\gamma \varepsilon_{D,M}^\gamma} \e^{-\lambda^2\gamma \varepsilon_{D,M} s}\left(1-\e^{-\lambda^2\varepsilon_{D,M}(t-s)}\right)^\gamma \\
    &\leq \frac{\mu_0(|M^{-1/2}p_0|^\gamma)}{\lambda^\gamma \varepsilon_{D,M}^\gamma} \sqrt{\lambda^2\varepsilon_{D,M}(t-s)}^\gamma \\
    &\leq \frac{\mu(|M^{-1/2} p_0|^{\gamma q})^{1/q}}{\varepsilon_{D,M}^{\gamma/2}}C_p(\mu_0)(t-s)^{\gamma/2},
    \end{aligned}
\end{equation}
using~$1-\e^{-x}\leq \min(1,x)\leq \sqrt x$ for~$x\geq 0$ in the third line, and using the reweighting argument of~\eqref{eq:conditioning_argument} in the last inequality, where we henceforth denote~${C_p(\mu_0) = \left\|\d\mu_0/\d\mu\right\|_{L^p(\mu)}}$. Since~$p_0$ is Gaussian under~$\mu$, the~$\gamma q$-th moment~$\mu(|M^{-1/2}p_0|^{\gamma q})$ is finite, and the estimate~\eqref{eq:i1_estimate} controls the contribution of~$I_1(s,t)$ to~$\E_{\mu_0}[|X_s^\lambda-X_t^\lambda|^\gamma]$ uniformly with respect to~$\lambda$.

Next, we treat the second term, writing:
\begin{equation}
    \begin{aligned}
    |I_2(s,t)|^\gamma &\leq \left(\int_{\lambda s}^{\lambda t}\int_{0}^r \left|\cR_r^\lambda(\cR_{u}^\lambda)^{-1} M^{-1/2}\nabla V(q_u^\lambda)\right|\,\d u\,\d r\right)^\gamma\\
    &\leq (\lambda(t-s))^{\gamma-1}\int_{\lambda s}^{\lambda t}\left(\int_0^r \e^{-\lambda\varepsilon_{D,M}(r-u)}|M^{-1/2}\nabla V(q_u^\lambda)|\,\d u\right)^\gamma\,\d r\\
    &\leq (\lambda(t-s))^{\gamma-1}\int_{\lambda s}^{\lambda t}\left(\int_{0}^r \e^{-\lambda\varepsilon_{D,M}(r-u)}\,\d u\right)^{\gamma-1}\left(\int_{0}^r \e^{-\lambda\varepsilon(r-u)}|M^{-1/2}\nabla V(q_u^\lambda)|^\gamma\,\d u\right)\,\d r\\
    &\leq \frac{(t-s)^{\gamma-1}}{\varepsilon_{D,M}^{\gamma-1}}\int_{\lambda s}^{\lambda t}\int_{0}^r \e^{-\lambda\varepsilon_{D,M}(r-u)}|M^{-1/2}\nabla V(q_u^\lambda)|^\gamma\,\d u\,\d r,
    \end{aligned}
\end{equation}
where we used Hölder's inequality twice: once in~$r$ in the second line for the Lebesgue measure, and once in~$u$ in the third line for the exponentially-weighted measure~$\e^{-\lambda\varepsilon_{D,M}(r-u)}\,\d u$.
Taking the expectation with respect to~$\P_{\mu_0}$, we get, with the same reweighting argument as in previous estimates:
\begin{equation}
    \label{eq:i2_estimate}
    \begin{aligned}
    \E_{\mu_0}\left[|I_2(s,t)|^\gamma\right] &\leq \underset{0\leq u\leq t}{\sup}\,\E_{\mu_0}[|M^{-1/2}\nabla V(q_u^\lambda)|^\gamma]\frac{(t-s)^{\gamma-1}}{\varepsilon_{D,M}^{\gamma-1}}\int_{\lambda s}^{\lambda t}\int_0^r \e^{-\lambda \varepsilon_{D,M}(r-u)}\,\d u\,\d r\\
    &\leq C_p(\mu_0)\|M^{-1/2}\|_{\mathrm{op}}^\gamma\mu\left(|\nabla V|^{\gamma q}\right)^{1/q}\frac{(t-s)^{\gamma-1}}{\varepsilon_{D,M}^{\gamma-1}}\frac{\lambda t-\lambda s}{\lambda \varepsilon_{D,M}}\\
    &= C_p(\mu_0)\|M^{-1/2}\|_{\mathrm{op}}^\gamma\mu\left(|\nabla V|^{\gamma q}\right)^{1/q}\frac{(t-s)^\gamma}{\varepsilon_{D,M}^\gamma}.
    \end{aligned}
\end{equation}

We finally treat the last term~$I_3(s,t)$, which requires some care. In the proof of~\cite[Lemma~3.1]{WSW24}, the authors suggest controlling similar double integrals by a (stochastic) Fubini's theorem followed by Doob's maximal inequality. This argument, while giving sufficient control formally, seems incorrect, as the resulting time-swapped integrands are not~$W$-adapted, nor in general (sub)martingales. A similar mistake can be found in the proof of~\cite[Section 3.4.2, Lemma 10]{H13}.
To avoid this difficulty, we write instead
\begin{equation}
    \label{eq:i3_rewrite}
    I_3(s,t) = \int_{\lambda s}^{\lambda t}Z_r^\lambda\,\d r,\qquad Z_t^\lambda := \sqrt{\frac{2\lambda}{\beta}}\cR_t^\lambda \int_0^t \left(\cR_s^\lambda\right)^{-1}M^{-1/2}D(q_s^\lambda)^{-1/2}\,\d W^\lambda_s,
\end{equation}
from which we get, using~\eqref{eq:fundamental_matrix_ode},
\begin{equation}
    \label{eq:Z_sde}
    \d Z_t^\lambda = -\lambda D_M(q_t^\lambda)Z_t^\lambda\d t + \sqrt{\frac{2\lambda}{\beta}}M^{-1/2}D(q_t^\lambda)^{-1/2}\,\d W_t^\lambda,
\end{equation}
against which we integrate~$-\lambda^{-1}D_M(q^\lambda)^{-1}$ on~$[\lambda s,\lambda t]$ to obtain
\begin{equation}
    \label{eq:I3_ito_decomposition}
    I_3(s,t) = \sqrt{\frac{2}{\lambda \beta}}\int_{\lambda s}^{\lambda t}\left[D_M^{-1}M^{-1/2}D^{-1/2}\right](q_r^\lambda)\,\d W^\lambda_r-\lambda^{-1}\int_{\lambda s}^{\lambda t}D_M(q_r^\lambda)^{-1}\d Z_r^\lambda.
\end{equation}
To treat the first term, we use the Burkholder--Davis--Gundy inequality to show the existence of~$C_{\gamma,\beta}>0$ such that
\begin{equation}
    \begin{aligned}
        \label{eq:I3_martingale_term}
        \E_{\mu_0}\left[\left|\sqrt{\frac{2}{\lambda\beta}}\int_{\lambda s}^{\lambda t}\left[D_M^{-1}M^{-1/2}D^{-1/2}\right](q_r^\lambda)\,\d W_r^\lambda\right|^\gamma\right] &\leq C_{\gamma,\beta}\lambda^{-\gamma/2} \E_{\mu_0}\left[\left(\int_{\lambda s}^{\lambda t}\left\|\left[D_M^{-1}M^{-1/2}D^{-1/2}\right](q_r^\lambda)\right\|^2_{\mathrm{HS}}\d r\right)^{\gamma/2}\right]\\
        &\leq C_{\gamma,\beta}\|D_M^{-1}M^{-1/2}D^{-1/2}\|_{L^\infty_{\mathrm{HS}}}^{\gamma}(t-s)^{\gamma/2}.
    \end{aligned}
\end{equation}
We rewrite the second term in~\eqref{eq:I3_ito_decomposition}, using It\^o's product rule, which gives
\begin{equation}
    \label{eq:I3_bv_term_ipp}
    \int_{\lambda s}^{\lambda t}D_M(q_r^\lambda)^{-1}\,\d Z_r^\lambda = D_M(q_{\lambda t}^\lambda)^{-1} Z_{\lambda t}^\lambda - D_M(q_{\lambda s}^\lambda)^{-1}Z_{\lambda s}^\lambda -\int_{\lambda s}^{\lambda t}\left(D_M^{-1}\right)'(q_r^\lambda)[M^{-1}p_r^\lambda]Z_r^\lambda\,\d r.
\end{equation}
Controlling these three terms requires some moment bounds on~$Z^\lambda$. The idea is to apply It\^o's formula to~$Z^\lambda$ with~$f(z)=|z|^\gamma$. In view of the SDE~\eqref{eq:Z_sde} satisfied by~$Z^\lambda$, it holds for~$\gamma>2$:
\begin{equation}
    \label{eq:diff_inequality}
    \begin{aligned}
    \frac{\d}{\d r}\E_{\mu_0}\left[|Z_r^\lambda|^\gamma\right] &= -\lambda\gamma\E_{\mu_0}\left[|Z_r^\lambda|^{\gamma-2}\left(Z_r^{\lambda}\right)^\top D_M(q_r^\lambda)Z_r^{\lambda}\right]\\
    &+\lambda\frac{\gamma(\gamma-2)}{\beta}\E_{\mu_0}\left[|Z_r^\lambda|^{\gamma-4}M^{-1/2}D(q_r^\lambda)^{-1}M^{-1/2}:Z_r^\lambda\otimes Z_r^\lambda\right]\\
    &+\lambda\frac{\gamma}{\beta}\E_{\mu_0}\left[|Z_r^\lambda|^{\gamma-2}\mathrm{Tr}\,\left(M^{-1/2}D(q_r^\lambda)^{-1}M^{-1/2}\right)\right]\\
    &\leq -\lambda C^{(1)}_\gamma\E_{\mu_0}\left[|Z_r^\lambda|^\gamma\right] + \lambda C^{(2)}_\gamma\E_{\mu_0}\left[|Z_r^\lambda|^{\gamma-2}\right]\\
    &\leq -\lambda C^{(3)}_\gamma\E_{\mu_0}\left[|Z_r^\lambda|^\gamma\right] +  \lambda C^{(4)}_\gamma,
    \end{aligned} 
\end{equation}
where the constants~$C_{\gamma}^{(i)} >0$,~$1\leq i\leq 4$ depend on~$D,\beta$ and~$M$, but may be chosen uniformly in~$\lambda$. Indeed, by optimizing~$\theta^{\gamma-2}-\alpha t^{\gamma}$ with respect to~$\theta$, we get the inequality
\begin{equation}
    \forall\,\theta\geq 0,\alpha>0,\gamma>2\qquad \theta^{\gamma-2}\leq \alpha \theta^\gamma +\alpha^{-\frac{\gamma-2}{2}}\frac{2}{\gamma}\left(\frac{\gamma-2}{\gamma}\right)^{\frac{\gamma-2}{2}},
\end{equation}
which we use in the last line of~\eqref{eq:diff_inequality} with~$\alpha<C_\gamma^{(1)}/C_\gamma^{(2)}$ to absorb~$\E_{\mu_0}\left[|Z_r^\lambda|^{\gamma-2}\right]$ into the dissipative term.

Solving the differential inequality with~$Z_0^\lambda=0$, we get:
\begin{equation}
    \label{eq:Z_moment_bound}
    \E_{\mu_0}\left[|Z_r^\lambda|^\gamma\right] \leq \lambda C^{(4)}_{\gamma}\frac{1-\e^{-r \lambda C^{(3)}_{\gamma}}}{\lambda C^{(3)}_{\gamma}} \leq \frac{C_\gamma^{(4)}}{C_\gamma^{(3)}}
\end{equation}
for all~$r\geq 0$. To make this argument rigorous, one should really apply It\^o's formula to the stopped process~$Z^\lambda_{t\land \tau^\lambda_K}$ with~$\tau^\lambda_K = \inf\{t\geq 0: |Z_t^\lambda|\geq K\}$ and~$f_K$ a bounded~$\mathcal C^2$ function satisfying~$f_K(z)\1_{\{|z|\leq K\}} = |z|^\gamma$. One can then obtain~$\E_{\mu_0}\left[|Z^\lambda_{r\land \tau_K^\lambda}|^\gamma\right]\leq C$ by the computation~\eqref{eq:diff_inequality} (for some constant~$C$ uniform in~$K$ and~$\lambda$), and pass to the limit~$K\to+\infty$.

We now use the bound~\eqref{eq:Z_moment_bound} to control~\eqref{eq:I3_bv_term_ipp}, starting with the boundary terms. Adding and substracting the term $D_M(q_{\lambda s}^\lambda)^{-1}Z_{\lambda t}^\lambda$ in the right-hand side of~\eqref{eq:I3_bv_term_ipp}, we estimate
\begin{equation}
    \label{eq:I3_telescoping_bound}
        |D_M(q_{\lambda t}^\lambda)^{-1} Z_{\lambda t}^\lambda - D_M(q_{\lambda s}^\lambda)^{-1}Z_{\lambda s}^\lambda|^\gamma\leq 2^{\gamma-1}\left( \|D_M^{-1}\|_{\cW_{\mathrm{op}}^{1,\infty}}^\gamma|q_{\lambda t}^\lambda-q_{\lambda s}^\lambda|^\gamma|Z_{\lambda s}^\lambda|^\gamma+\|D_M^{-1}\|_{L^\infty_{\mathrm{op}}}|Z_{\lambda t}^\lambda - Z_{\lambda s}^\lambda|^\gamma\right),
\end{equation}
where $\|D_M^{-1}\|_{\cW_{\mathrm{op}}^{1,\infty}}$ is a Lipschitz constant for~$D_M^{-1}$, which exists by Assumption~\ref{hyp:elliptic}. 
We first write, by a H\"older inequality and the SDE~\eqref{eq:underdamped_friction_dependent},
\begin{equation}
    |q_{\lambda t}^\lambda-q_{\lambda s}^\lambda|^\gamma \leq (\lambda(t-s))^{\gamma -1}\int_{\lambda s}^{\lambda t}| M^{-1}p_r^\lambda|^\gamma\,\d r,
\end{equation}
so that a Cauchy--Schwarz inequality gives, together with the moment bound~\eqref{eq:Z_moment_bound}:
\begin{equation}
    \begin{aligned}
        \label{eq:I3_telescoping_bound_q_incr}
        \E_{\mu_0}\left[|q_{\lambda t}^\lambda-q_{\lambda s}^\lambda|^\gamma|Z_{\lambda s}^\lambda|^\gamma\right]&\leq (\lambda(t-s))^{\gamma -1}\int_{\lambda s}^{\lambda t}\E_{\mu_0}\left[|M^{-1}p_r^\lambda|^\gamma|Z_{\lambda s}^\lambda|^\gamma\right]\,\d r\\
        &\leq \lambda^\gamma(t-s)^\gamma\left(\frac{C_{2\gamma}^{(4)}}{C_{2\gamma}^{(3)}}\right)^{1/2}\mu\left(\left|M^{-1}p\right|^{2\gamma q}\right)^{1/q}C_p(\mu_0),
    \end{aligned}
\end{equation}
where we used the usual reweighting argument in the final line.

Next, using the SDE~\eqref{eq:Z_sde} for~$Z^\lambda$, we write
\begin{equation}
    Z_{\lambda t}^\lambda - Z_{\lambda s}^\lambda = -\lambda\int_{\lambda s}^{\lambda t}D_M(q_r^\lambda) Z_r^\lambda\,\d r + \sqrt{\frac{2\lambda}{\beta}}\int_{\lambda s}^{\lambda t}M^{-1/2}D(q_r^\lambda)^{-1/2}\,\d W_r^\lambda.
\end{equation}
The martingale component is controlled with the Burkholder--Davis--Gundy inequality by
\begin{equation}
    \label{eq:I3_telescoping_bound_mart}
    \E_{\mu_0}\left[\left|\sqrt{\frac{2\lambda}{\beta}}\int_{\lambda s}^{\lambda t}M^{-1/2}D(q_t^\lambda)^{-1/2}\,\d W_t^\lambda\right|^\gamma\right]\leq C_{\gamma,\beta}\lambda^{\gamma}\left\|M^{-1/2}D^{-1/2}\right\|^\gamma_{L^\infty_\mathrm{HS}}(t-s)^{\gamma/2}.
\end{equation}

For the drift component, we estimate pathwise by a H\"older inequality:
\begin{equation}
    \left|-\lambda\int_{\lambda s}^{\lambda t}D_M(q_r^\lambda) Z_r^\lambda\,\d r\right|^\gamma \leq \lambda^\gamma \|D_M\|_{L^\infty_{\mathrm{op}}}^\gamma (\lambda(t-s))^{\gamma -1}\int_{\lambda s}^{\lambda t}|Z_r^\lambda|^\gamma\,\d r, 
\end{equation}
so that using the moment bound~\eqref{eq:Z_moment_bound} yields
\begin{equation}
    \label{eq:I3_telescoping_bound_drift}
    \E_{\mu_0}\left[\left|-\lambda\int_{\lambda s}^{\lambda t}D_M(q_r^\lambda) Z_r^\lambda\,\d r\right|^\gamma\right]\leq \lambda^{2\gamma}(t-s)^\gamma\|D_M\|_{L^\infty_{\mathrm{op}}}^\gamma \frac{C_\gamma^{(4)}}{C_\gamma^{(3)}}.
\end{equation}
Combining~\eqref{eq:I3_telescoping_bound_mart} and~\eqref{eq:I3_telescoping_bound_drift}, we get an estimate
\begin{equation}
    \label{eq:I3_Z_increment_transient}
    \E_{\mu_0}\left[|Z_{\lambda t}^\lambda -Z_{\lambda s}^\lambda|^\gamma\right] \leq C^{(5)}_{\gamma}(\lambda^{2\gamma}(t-s)^\gamma + \lambda^\gamma (t-s)^{\gamma/2})
\end{equation}
for some constant~$C^{(5)}_\gamma>0$ independent of $\lambda$. Because of the term scaling as~$\lambda^{2\gamma}$, this bound alone does not give sufficient control on the increments of~$Z^\lambda$ as~$\lambda\to+\infty$. For this we use the following argument.

From the moment bound~\eqref{eq:Z_moment_bound}, we simply have
\begin{equation}
    \E_{\mu_0}\left[|Z_{\lambda t}^\lambda -Z_{\lambda s}^\lambda|^\gamma\right] \leq 2^{\gamma}\frac{C_\gamma^{(4)}}{C_\gamma^{(3)}},
\end{equation}
which combined with~\eqref{eq:I3_Z_increment_transient} gives, for some constant~$C^{(6)}_\gamma>0$ independent of $\lambda$,
\begin{equation}
    \label{eq:I3_Z_increment_final_bound}
    \E_{\mu_0}\left[|Z_{\lambda t}^\lambda -Z_{\lambda s}^\lambda|^\gamma\right] \leq \min\left(C_\gamma^{(5)}\left[\lambda^{2\gamma}(t-s)^\gamma+\lambda^\gamma(t-s)^{\gamma/2}\right],C^{(6)}_\gamma\right)\leq \max(2C_\gamma^{(5)},C^{(6)}_\gamma)\lambda^{\gamma}(t-s)^{\gamma/2},
\end{equation}
where we used the elementary inequality $\min(u^2+u,M)\leq \max(2,M)u$ with~$u=\lambda^\gamma(t-s)^{\gamma/2}$, which can be easily verified by considering the cases~$0\leq u\leq 1$ and~$u>1$ separately.

Taking the expectation under~$\mu_0$ of~\eqref{eq:I3_telescoping_bound} and inserting the estimates~\eqref{eq:I3_telescoping_bound_q_incr},~\eqref{eq:I3_Z_increment_final_bound}, we obtain
\begin{equation}
    \label{eq:I3_boundary_terms_final_bound}
    \begin{aligned}
        \E_{\mu_0}\left[\left|\frac1\lambda\left(D_M(q_{\lambda t}^\lambda)^{-1}Z_{\lambda t}^\lambda - D_M(q_{\lambda s}^\lambda)^{-1}Z_{\lambda s}^\lambda\right)\right|^\gamma\right]&\leq C^{(7)}_\gamma\left((t-s)^\gamma+(t-s)^{\gamma/2}\right)\\
        &\leq C^{(7)}_\gamma(1+T^{\gamma/2})(t-s)^{\gamma/2},
    \end{aligned}
\end{equation}
for some constant~$C^{(7)}_\gamma$ independent of~$\lambda$, which gives the desired control on the contributions of the boundary terms in the right-hand-side of~\eqref{eq:I3_bv_term_ipp} to the second term on the right-hand-side of~\eqref{eq:I3_ito_decomposition}. In the last line of~\eqref{eq:I3_boundary_terms_final_bound}, we used $t-s\leq T$ to absorb~$(t-s)^{\gamma/2}$ in the prefactor.

It remains to estimate the contribution of the integral term on the right-hand-side of~\eqref{eq:I3_bv_term_ipp}. By  H\"older's inequality we get
\begin{equation}
    \label{eq:I3_bv_integral_term}
    \begin{aligned}
    \mathbb E_{\mu_0}\left[\left|\lambda^{-1}\int_{\lambda s}^{\lambda t}\left(D_M^{-1}\right)'(q_r^\lambda)\left[M^{-1}p_t^\lambda\right]Z_r^\lambda\,\d r\right|^\gamma\right]&\leq C_{\gamma}^{(8)}\lambda^{-1}(t-s)^{\gamma-1}\int_{\lambda s}^{\lambda t}\E_{\mu_0}\left[\left|M^{-1}p_r^\lambda\right|^\gamma\left|Z_r^\lambda\right|^\gamma\right]\,\d r\\
&\leq C^{(8)}_{\gamma}(t-s)^{\gamma}\left(\frac{C_{2\gamma}^{(4)}}{C_{2\gamma}^{(3)}}\right)^{1/2}\mu\left(\left|M^{-1}p\right|^{2\gamma q}\right)^{1/q}C_p(\mu_0)\\
&\leq C^{(9)}_{\gamma}T^{\gamma/2}(t-s)^{\gamma/2}
    \end{aligned}
\end{equation}
where~$C^{(8)}_{\gamma}=\left\|\left(D_M^{-1}\right)'\right\|^\gamma_{L^\infty(\cX;\cL(\R^d;\R^{d\times d}))}$ is finite by Assumptions~\ref{hyp:smoothness} and~\ref{hyp:elliptic}.
We used H\"older's inequality in the first line, a Cauchy--Schwarz inequality with the same reweighting argument as before in the second line, and~$t-s\leq T$ in the last line.

Collecting the estimates~\eqref{eq:i1_estimate}~\eqref{eq:i2_estimate},~\eqref{eq:I3_martingale_term},~\eqref{eq:I3_boundary_terms_final_bound} and~\eqref{eq:I3_bv_integral_term}, we find that there exists~$\gamma>2$ and~$C>0$ such that, for all~$\lambda\geq 1$ and all~$0\leq s < t\leq T$, it holds
\begin{equation}
    \label{eq:tightness_final_estimate}
    \E_{\mu_0}\left[\left|X_t^\lambda-X_s^\lambda\right|^\gamma\right] \leq C(t-s)^{\gamma/2}.
\end{equation}
By the criterion~\eqref{eq:tightness_criterion}, the family of path laws~$\left(X^\lambda_*\P_{\mu_0}\right)_{\lambda\geq 1}$ is tight, which concludes the proof of~\eqref{eq:convergence_in_law} by a corollary of Prokhorov's theorem, see~\cite[Theorem 8.1]{B68}.
\paragraph{Proof of Corollary~\ref{corr:traj_averages}.}
We simply write
\begin{equation}
    \begin{aligned}
    \E_{\mu_0}\left[\left|\frac1T\int_0^T \varphi(X_s^\lambda)\,\d s - \frac1T\int_0^T \varphi(X_s)\,\d s\right|^r\right] & \leq \frac1T\int_0^T\E_{\mu_0}\left[\left|\varphi(X_s^\lambda)-\varphi(X_s)\right|^r\right]\,\d s\\
    &\leq \|\varphi\|_{\cC^{0,\eta}(\cX)}^{r/\eta}\underset{0\leq s\leq T}{\sup}\,\E_{\mu_0}\left[\left|X_s^\lambda-X_s\right|^{r/\eta}\right]
    \end{aligned}
\end{equation}
and apply~\eqref{eq:convergence_in_l2} with~$\alpha = r/\eta$.

\section{Overdamped limit of coarse-grained Langevin dynamics}
\label{sec:coarse_graining}
    It is possible, by mimicking the arguments of Section~\ref{sec:proof_main}, to the study the case where the underdamped dynamics is given by
    \begin{equation}
        \label{eq:effective_ul}
        \left\{\begin{aligned}
        \d z_t^\lambda &= A(z_t^\lambda)\,v_t^\lambda\,\d t,\\
        \d v_t^\lambda &= \left(-A\nabla V + \frac1\beta\,\div A\right)(z_t^\lambda)\,\d t - \lambda v_t^\lambda\,\d t + \sqrt{\frac{2\lambda}{\beta}}\,\d W_t^\lambda
        \end{aligned}\right.
    \end{equation}
    where~$A:\cX\to\cSpd$ is a positive-definite gradient field. Such equations appear naturally when considering effective (or ``coarse-grained'') kinetic Langevin dynamics, in the spirit of the works~\cite{LL10,ZHS16} for the overdamped case, as explained in the upcoming work~\cite{LSZ26}.

    The process~$z^\lambda$ can be understood as a dynamical approximations of the position process~$q^\lambda$ of the kinetic Langevin dynamics~\eqref{eq:underdamped_friction_dependent} (with~$M=D=\Id$) on~$\R^p$ through a collective variable~$\xi:\R^p\to\cX$ (typically with~$p\gg \dim\,\cX$).
    In this perspective,~$V$ is the free energy associated with $\xi$ , and~$A(z)$ is given by the conditional average
    $$
    A(z) = \int_{\xi^{-1}(z)} G^{1/2}\,\d\nu_z
    $$
    where~$\nu_z$ is the conditional Gibbs measure on the level set~$\xi^{-1}(z)$ and~$G = \nabla\xi^\top\nabla\xi$ is the Gram matrix (we refer to~\cite[Section 3.2.1]{LRS10} for further details on the physical meaning of these objects).

    The Boltzmann--Gibbs measure~\eqref{eq:boltzmann_gibbs} (with kinetic energy~$U(v)=\frac12|v|^2$) is the unique invariant probability measure for the dynamics~\eqref{eq:effective_ul}, whose generator is given by
    \begin{equation}
        \label{eq:cg_generator}
        \widetilde{\cL}_\lambda = \frac1\beta\left(\nabla_v^*A\nabla_z-\nabla_z^*A\nabla_v-\lambda\nabla_v^*\nabla_v\right).
    \end{equation}

    The proofs of Theorem~\ref{thm:overdamped_limit} and Corollary~\ref{corr:pathwise} given above can be straightforwardly adapted to obtain the overdamped limit of the dynamics~\eqref{eq:effective_ul}, by the following result.
    \begin{theorem}
        \label{thm:effective_od}

     Let~$Z$ be the solution to
    \begin{equation}
        \label{eq:effective_od}
        \d Z_t = -A^2(Z_t)\nabla V(Z_t)\,\d t +\frac1\beta\div\left(A^2\right)(Z_t)\,\d t +\sqrt{\frac{2}{\beta}}A(Z_t)\,\d W_t,
    \end{equation}
      which is none other than the dynamics~\eqref{eq:overdamped_friction_dependent} with diffusion matrix~$D=A^2$.

    Assume that the conditions of Theorem~\ref{thm:overdamped_limit} are satisfied, and that~$Z_0^\lambda = Z_0$ for all~$\lambda>0$ with~$Z_0\sim \mu_0$.
    Assume furthermore that Assumption~\ref{hyp:elliptic} is satisfied for the choice~$D=A$.

    Let~$T>0$.
    \begin{enumerate}[i)]
            \item{
            For any~$0<\alpha\leq 2/q$, in the limit~$\lambda\to +\infty$,
            \begin{equation}
                \label{eq:effective_wasserstein_bound}
        \underset{0\leq t\leq T}{\sup}\,\E_{\mu_0}\left[\left|Z_t^\lambda - Z_t\right|^\alpha\right] = \bigo\left(\lambda^{-\alpha/2}\right).
    \end{equation}}
            \item{
            Pathwise weak convergence holds:
                \begin{equation}
            \label{eq:effective_pathwise_cv}
        Z_*^\lambda\P_{\mu_0} \xrightarrow[\lambda\to +\infty]{\text{weakly}} Z_*\P_{\mu_0}\,\qquad \text{in }\cP\left(\cC\left([0,T];\cX\right)\right).
    \end{equation} }
    \end{enumerate}
    \end{theorem}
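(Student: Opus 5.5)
We mimic the proof of Theorem~\ref{thm:overdamped_limit}, replacing the quantity $D(q)p$ by $A(z)v$. Write $b(z)=-A\nabla V+\frac1\beta\div A$ for the drift of $v^\lambda$. Since $\d z^\lambda = Av^\lambda\,\d t$ has zero quadratic variation, It\^o's formula gives
\begin{equation*}
\d\left[A(z_t^\lambda)v_t^\lambda\right] = A'(z_t^\lambda)\left[A(z_t^\lambda)v_t^\lambda\right]v_t^\lambda\,\d t + A b(z_t^\lambda)\,\d t - \lambda\,\d z_t^\lambda + \sqrt{\frac{2\lambda}{\beta}}A(z_t^\lambda)\,\d W_t^\lambda .
\end{equation*}
We then isolate $\d z^\lambda$, integrate over $[0,\lambda t]$ and rescale time. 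This yields $Z_t^\lambda = Z_0 + \int_0^t \widetilde b(Z_s^\lambda)\,\d s + \sqrt{2/\beta}\int_0^t A(Z_s^\lambda)\,\d W_s + \widetilde R(t,\lambda)$. Here $\widetilde b = \Pi_0\big(A'[Av]v\big) + Ab$, and
\begin{equation*}
\widetilde R = \lambda^{-1}\big[A(z_0)v_0 - A(z_{\lambda t})v_{\lambda t}\big] + \lambda^{-1}\int_0^{\lambda t}\widetilde\psi(z_s,v_s)\,\d s,\qquad \widetilde\psi = (\Id-\Pi_0)\big(A'[Av]v\big).
\end{equation*}
The key algebraic check, analogous to~\eqref{eq:ipp_fonda}, is that $\widetilde b$ equals the drift of~\eqref{eq:effective_od}. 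Gaussian integration by parts in $v$ gives $\Pi_0(A'[Av]v)_k = \frac1\beta\sum_{i,j}\partial_i A_{kj}A_{ij}$. Adding $\frac1\beta(A\,\div A)_k=\frac1\beta\sum_{i,j}A_{ki}\partial_jA_{ij}$ and using the symmetry of $A$ gives $\frac1\beta\sum_{i,j}\partial_j(A_{ki}A_{ij}) = \frac1\beta(\div A^2)_k$. Together with $-A^2\nabla V$, this is exactly the drift of~\eqref{eq:effective_od}. This is where the $\frac1\beta\div A$ term in~\eqref{eq:effective_ul} is needed to produce the noise-induced drift of $D=A^2$.

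Next we need a uniform hypocoercivity estimate for $\widetilde\cL_\lambda$ in~\eqref{eq:cg_generator}, namely the analogue of Lemma~\ref{lemma:hypocoercivity}. Presumably this is the content of Lemma~\ref{lemma:hypocoercivity_bis}, which I would invoke. If I had to prove it, I would follow the same algebraic approach of~\cite{BFLS22}. The antisymmetric part $\frac1\beta(\nabla_v^*A\nabla_z - \nabla_z^*A\nabla_v)$ has macroscopic part $\Pi_0\cLham^2\Pi_0 \propto -\nabla_z^*A^2\nabla_z$. This operator is coercive on $L^2_0(\nu)$ by the Poincar\'e inequality for $\nu$ and the ellipticity of $A$. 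The symmetric part $-\frac\lambda\beta\nabla_v^*\nabla_v$ is coercive on $\ker\Pi_0$ by the Gaussian Poincar\'e inequality. From these one gets, for $\varphi\in(\Id-\Pi_0)L^2_0(\mu)$, the bounds $\|\widetilde\cL_\lambda^{-1}\varphi\|\le C\|\varphi\|$ and $\|\nabla_v\widetilde\cL_\lambda^{-1}\varphi\|\le C\lambda^{-1/2}\|\varphi\|$. The regularity needed is $A\in\cW^{2,\infty}$, which follows from Assumption~\ref{hyp:elliptic} with $D=A$. We also need $\widetilde\psi\in L^2(\mu)$, which holds because $v$ has Gaussian moments. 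With these bounds we solve $\widetilde\cL_\lambda\widetilde\Phi_\lambda=\widetilde\psi$, and hypoellipticity makes $\widetilde\Phi_\lambda$ smooth. Applying It\^o's formula as in~\eqref{eq:remainder_integral}, now with martingale term $\sqrt{2/\beta}\int\nabla_v\widetilde\Phi_\lambda\,\d W$, gives $\E_\mu|\widetilde R|^2\le C_T/\lambda$. The Gr\"onwall argument with Lipschitz coefficients $A^2\nabla V$, $\div A^2$ and $A$, followed by the H\"older reweighting~\eqref{eq:conditioning_argument}, then proves item~i).

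For item~ii), finite-dimensional marginals converge by~i), so it remains to prove tightness via the moment criterion~\eqref{eq:tightness_criterion}. The situation is simpler than in Corollary~\ref{corr:pathwise} because the friction is scalar. We write $v_t = \e^{-\lambda t}v_0 + \int_0^t\e^{-\lambda(t-s)}b(z_s)\,\d s + Y_t$, where $\d Y = -\lambda Y\,\d t + \sqrt{2\lambda/\beta}\,\d W^\lambda$ is an Ornstein--Uhlenbeck process independent of position. The increment is $Z^\lambda_t-Z^\lambda_s=\int_{\lambda s}^{\lambda t}A(z_r)v_r\,\d r$, and $A$ is bounded. The first two pieces are handled exactly as $I_1$ and $I_2$, using $b\in L^{\gamma q}(\nu)$.

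The main obstacle is the OU piece $\int A(z_r)Y_r\,\d r$, because $A(z_r)$ is not constant and we cannot simply integrate $Y$. I would use $Y_r\,\d r = -\lambda^{-1}\d Y_r + \sqrt{2/(\lambda\beta)}\,\d W^\lambda_r$ and integrate by parts against $A(z_r)$, as in~\eqref{eq:I3_ito_decomposition}--\eqref{eq:I3_bv_term_ipp}. This produces three kinds of terms, each controlled as in the proof of Corollary~\ref{corr:pathwise}:
\begin{itemize}
\item a BDG-controlled martingale of order $(t-s)^{1/2}$;
\item boundary terms $\lambda^{-1}[A(z_{\lambda t})Y_{\lambda t}-A(z_{\lambda s})Y_{\lambda s}]$, handled with the $\min$ trick~\eqref{eq:I3_Z_increment_final_bound} and the uniform Gaussian moments of $Y$;
\item a term $\lambda^{-1}\int A'[Av_r]Y_r\,\d r$, controlled by Cauchy--Schwarz together with the moment bounds on $v$ (reweighted) and on $Y$.
\end{itemize}
Altogether this yields $\E_{\mu_0}|Z_t^\lambda-Z_s^\lambda|^\gamma\le C(t-s)^{\gamma/2}$ for some $\gamma>2$, uniformly in $\lambda\ge1$, which gives tightness and hence~ii).
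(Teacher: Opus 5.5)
Your part (i) is the paper's argument essentially verbatim: It\^o's formula for $A(z_t^\lambda)v_t^\lambda$, the identity $\Pi_0(A'[Av]v)=\frac1\beta(\div(A^2)-A\,\div A)$ (which you verify correctly, using the symmetry of $A$), the Poisson equation via Lemma~\ref{lemma:hypocoercivity_bis}, then Gr\"onwall and reweighting. You leave the hypoellipticity of $\widetilde{\cL}_\lambda$ implicit. Assumption~\ref{hyp:hypoellipticity} does not cover it, but it follows because the commutator of $\partial_{v_j}$ with the first-order part $(Av)\cdot\nabla_z+(b-\lambda v)\cdot\nabla_v$ equals $(Ae_j)\cdot\nabla_z-\lambda\partial_{v_j}$, and $A$ is invertible.

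For (ii) you take a genuinely different route from the paper, and yours is the one that works for non-constant $A$. The paper bounds $|Z_t^\lambda-Z_s^\lambda|^a\le\|A\|_{L^\infty_{\mathrm{op}}}^a\bigl|\int_{\lambda s}^{\lambda t}v_r^\lambda\,\d r\bigr|^a$. It then uses the deterministic kernel $\e^{-\lambda t}$ to treat the Ornstein--Uhlenbeck piece $\int Y_r\,\d r$ by the direct Fubini argument of~\cite{WSW24}. That first inequality only holds for constant $A$. Bounding $A(z_r)$ uniformly forces the absolute value inside the integral, which destroys the cancellations that make $\int_{\lambda s}^{\lambda t}Y_r\,\d r$ of size $(t-s)^{1/2}$ rather than $\lambda(t-s)$.

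You instead keep $A(z_r)$ inside and integrate by parts against $\d Y_r=-\lambda Y_r\,\d r+\sqrt{2\lambda/\beta}\,\d W^\lambda_r$, as in the treatment of $I_3$ in Corollary~\ref{corr:pathwise}. This handles state-dependent $A$ correctly, and it is easier than in that corollary. $Y$ is an explicit centered Gaussian process independent of $(z_0,v_0)$, so the Lyapunov-type bound~\eqref{eq:Z_moment_bound} is replaced by explicit Gaussian moments. The increment bound $\E|Y_{\lambda t}-Y_{\lambda s}|^\gamma\le C\min(1,\lambda^2(t-s))^{\gamma/2}\le C\lambda^\gamma(t-s)^{\gamma/2}$ is then immediate. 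The paper's shortcut is shorter but only valid for constant $A$; your three extra estimates are what the stated theorem actually needs.

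One caveat is shared with the paper's proof of Theorem~\ref{thm:overdamped_limit}. The Gr\"onwall step uses global Lipschitz continuity of $A^2\nabla V$. On $\cX=\R^d$, with $\nabla V$ unbounded and $A$ non-constant, this does not follow from Assumptions~\ref{hyp:smoothness} and~\ref{hyp:elliptic}; it is automatic on the torus.
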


    We sketch the proof of this result, omitting some details for the sake of brevity, since the arguments are minor modifications of those contained in the proofs of Theorem~\ref{thm:effective_od} and Corollary~\ref{corr:pathwise}.

    \begin{proof}[(Sketch of proof)]
    We first note that since~$\nabla^2 U = \Id$, the generator~$\widetilde{\cL}_\lambda$ is hypoelliptic for any~$\lambda>0$, by a straightforward adaptation of the argument in the proof of Lemma~\ref{lemma:hypoellipticity}, using the fact that~$A$ has full rank everywhere, which follows from Assumption~\ref{hyp:elliptic}.

    Applying It\^o's formula to the process~$A(z_t^\lambda)v_t^\lambda$, we find
    \begin{equation}
        \d\left(A(z_t^\lambda) v_t^\lambda\right) = \left[-A^2\nabla V + \frac{A}\beta\,\div\, A\right](z_t^\lambda)\,\d t - \lambda\,\d z_t^\lambda + \sqrt{\frac{2\lambda}\beta}A(z_t^\lambda)\,\d W_t^\lambda + A'(z_t^\lambda)\left[A(z_t^\lambda)v_t^\lambda\right]v_t^\lambda\,\d t.
    \end{equation}
    With a direct computation similar to~\eqref{eq:ipp_fonda}, we show
    \begin{equation}
        \Pi_0\left(A'(z)\left[A(z)v\right]v\right) = \frac1\beta\left(\div(A^2)-A\,\div\,A\right)(z),
    \end{equation}
    so that, writing~$Z_t^\lambda = z_{\lambda t}^\lambda$ for the time-rescaled process, the equality
    \begin{equation}
        Z_t^\lambda = Z_0 - \int_0^t \left[A^2(Z_s^\lambda)\nabla V(Z_s^\lambda) - \frac1\beta\,\div A^2(Z_s^\lambda)\right]\,\d s + \sqrt{\frac{2}{\beta}}\int_0^t A(Z_s^\lambda)\,\d W_s + \widetilde{R}(t,\lambda),
    \end{equation}
    holds, with remainder term
    \begin{equation}
        \widetilde{R}(t,\lambda) = \frac{A(z_0^\lambda)v_0^\lambda - A(z_{\lambda t}^\lambda)v_{\lambda t}^\lambda}{\lambda} + \frac1\lambda\int_{0}^{\lambda t}\widetilde{\psi}(z_s^\lambda,v_t^\lambda)\,\d s,
    \end{equation}
    where~$\widetilde{\psi}(z,v) = (\Id-\Pi_0)A'(z)[A(z)v]v$.

    Using the following result, which is the counterpart of Lemma~\ref{lemma:hypocoercivity} in this setting, the Gr\"onwall estimate and reweighting argument from Theorem~\ref{thm:overdamped_limit} concludes the proof of the first item in Theorem~\ref{thm:effective_od}.
    \begin{lemma}
        \label{lemma:hypocoercivity_bis}
        Suppose that Assumption~\ref{hyp:elliptic} with~$D=A$ and Assumption~\ref{hyp:UVreg} are satisfied.
        Then the conclusions of Lemma~\ref{lemma:hypocoercivity} remain valid with the substitution~$\cL_\lambda \leftarrow \widetilde{\cL}_\lambda$.
    \end{lemma}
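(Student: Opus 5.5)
The plan is to run the $L^2$-hypocoercivity argument of \cite{DMS15,BFLS22}, used for Lemma~\ref{lemma:hypocoercivity}, in the abstract form
\[
\widetilde{\cL}_\lambda=\cLham_A+\lambda\cLou_0,\qquad \cLham_A=\tfrac1\beta\left(\nabla_v^*A\nabla_z-\nabla_z^*A\nabla_v\right),\qquad \cLou_0=-\tfrac1\beta\nabla_v^*\nabla_v .
\]
Compared with $\cL_\lambda$, the position dependence has moved from the symmetric part to the antisymmetric part. The symmetric part is now the standard Ornstein--Uhlenbeck operator in $v$, with kernel $\Pi_0\wLmu$ and spectral gap $K_\kappa^2/\beta$ on $(\Id-\Pi_0)\wLmu$, because $\kappa$ is Gaussian. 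The antisymmetric part satisfies $\Pi_0\cLham_A\Pi_0=0$, since $\Pi_0\nabla_v=0$ on functions of $z$ and $\Pi_0\nabla_v^*g(z)=\beta\,\Pi_0(v)\,g=0$. Its restriction $\cLham_A\Pi_0 f=\frac1\beta\nabla_v^*A\nabla_z\Pi_0 f=(A\nabla_z\Pi_0f)\cdot v$ gives
\[
\|\cLham_A\Pi_0 f\|^2=\tfrac1\beta\|A\nabla_z\Pi_0f\|_{L^2(\nu)}^2\ge \tfrac{1}{\beta M_D^2}\|\nabla_z\Pi_0 f\|^2 .
\]
With the Poincar\'e inequality for $\nu$, this yields the macroscopic coercivity $\|\cLham_A\Pi_0 f\|\ge c\|\Pi_0 f\|$ on $\wLmuz$. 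The corresponding "Schur complement" operator is $(\cLham_A\Pi_0)^*(\cLham_A\Pi_0)=-\frac1\beta\Pi_0\nabla_z^*A^2\nabla_z\Pi_0=-\cLod|_{D=A^2}$, which is consistent with the limit~\eqref{eq:effective_od}.

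The first step is invertibility on $\wLmuz$ for fixed $\lambda$. I would use the modified-norm or Schur-complement approach of \cite{BFLS22}. This needs the auxiliary operator $B=(1+(\cLham_A\Pi_0)^*\cLham_A\Pi_0)^{-1}(\cLham_A\Pi_0)^*$ to have $B\cLham_A(\Id-\Pi_0)$ bounded. By duality, that reduces to an elliptic regularity estimate for the weighted operator $\Id-\cLod$ with $D=A^2$:
\[
\|\nabla_z^2 u\|_{L^2(\nu)}+\|\,|\nabla V|\,\nabla_z u\|_{L^2(\nu)}\lesssim\|(\Id-\cLod)u\|_{L^2(\nu)} .
\]
This is where~\eqref{eq:Vreg} enters, together with $A\in\cW^{2,\infty}$ and uniform ellipticity; the $A$-dependent commutators are lower-order terms bounded via $\|A'\|_\infty$. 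I would prove this exactly as in the appendix proof of Lemma~\ref{lemma:hypocoercivity}, i.e.\ as the elliptic estimate behind \cite[Assumption 3.2]{BFLS22}. One extra point needs checking: applying $\cLham_A$ to $v$-polynomials of degree two produces terms like $\nabla_z^*(A\,\cdot)$, and these require bounds on $A'$. Assumption~\ref{hyp:elliptic} for $D=A$ supplies them, and~\eqref{eq:Ureg} holds trivially for $U=\frac12|v|^2$.

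The uniform estimates~\eqref{eq:hypocoercivity} for $\varphi\in(\Id-\Pi_0)\wLmuz$ come next. Write $f=\widetilde{\cL}_\lambda^{-1}\varphi$ and split $f=\Pi_0 f+(\Id-\Pi_0)f$. Testing the equation against $f$ gives the energy identity
\[
\tfrac\lambda\beta\|\nabla_v f\|^2=-\langle\varphi,f\rangle=-\langle\varphi,(\Id-\Pi_0)f\rangle .
\]
This step uses $\varphi\perp\Pi_0\wLmu$. With Poincar\'e for $\kappa$ it yields $\lambda\|(\Id-\Pi_0)f\|^2\lesssim\|\varphi\|\,\|(\Id-\Pi_0)f\|$, hence $\|(\Id-\Pi_0)f\|\lesssim\lambda^{-1}\|\varphi\|$ and $\|\nabla_vf\|\lesssim\lambda^{-1}\|\varphi\|$, better than needed. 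To control $\Pi_0 f$, apply $\Pi_0$ and then $B$-type test functions: $\Pi_0\widetilde{\cL}_\lambda f=\Pi_0\cLham_A(\Id-\Pi_0)f=\Pi_0\varphi=0$. Project the equation onto first-order $v$-modes, i.e.\ test against $\cLham_A\Pi_0 g$ with $g=(-\cLod)^{-1}\Pi_0 f$. This gives
\[
\|\Pi_0 f\|^2\lesssim\left(\|\cLham_A(\Id-\Pi_0)f\|_{\text{dual}}+\lambda\|\nabla_v(\Id-\Pi_0)f\|+\|\varphi\|\right)\|\Pi_0 f\| ,
\]
where the dual norm is again controlled by the elliptic regularity estimate. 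Inserting the bounds from the energy identity gives $\|\Pi_0 f\|\lesssim\|\varphi\|$ uniformly for $\lambda\ge1$. In the Gr\"onwall argument, only $\|f\|\le C\|\varphi\|$ and $\|\nabla_v f\|\le C\lambda^{-1/2}\|\varphi\|$ are required.

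I expect the main obstacle to be the elliptic regularity bound for $\Id-\cLod$ with the position-dependent matrix $A^2$ on unbounded $\cX$. The commutators $[\nabla_z,A]$ have to be absorbed using only~\eqref{eq:Vreg} and the $\cW^{2,\infty}$ bounds, without any growth control beyond those. I would first try the Bochner-type integration by parts from \cite{BFLS22}, treating $A^2$ as a bounded perturbation of a constant matrix locally.
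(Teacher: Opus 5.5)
Your proposal is correct. It uses the same key analytic lemma as the paper but a different mechanism for extracting the uniform-in-$\lambda$ bounds.

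\textbf{What the paper does.} It verifies the abstract hypotheses of \cite{BFLS22}:
\begin{itemize}
\item microscopic coercivity with $s=\lambda K_\kappa^2/\beta$;
\item macroscopic coercivity via $\widetilde{\cA}^*_{\subplus 0}\widetilde{\cA}_{\subplus 0}=\frac1\beta\Pi_0\nabla_z^*A^2\nabla_z\Pi_0$;
\item boundedness of $C_\lambda$ and $D_\lambda=\lambda J+K_A$, with norms $\bigo(\lambda)$.
\end{itemize}
It then reads the estimates off the block formula for $\widetilde{\cL}_\lambda^{-1}$. For $\varphi\in\Pi_{\subplus}\wLmuz$, the $\bigo(\lambda)$ block $\mathfrak S_{0,\lambda}^{-1}$ never acts alone, so $\|f\|=\bigo(1)\|\varphi\|$. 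The $\nabla_v$ bound comes from $\|\nabla_v f\|^2\lesssim\lambda^{-1}\|\varphi\|\,\|f\|$.

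\textbf{What you do.} You use the energy identity with $\langle\varphi,f\rangle=\langle\varphi,(\Id-\Pi_0)f\rangle$ to get $\|(\Id-\Pi_0)f\|+\|\nabla_v f\|\lesssim\lambda^{-1}\|\varphi\|$. You then control $\Pi_0 f$ by testing against $\widetilde{\cA}\Pi_0 g$ with $g=(-\cLod)^{-1}\Pi_0 f$. This works: the three contributions are $\|\Pi_0 f\|^2$, $-\langle\Pi_{\subplus}f,\Pi_{\subplus}\widetilde{\cA}^2\Pi_0 g\rangle$ and $-\frac\lambda\beta\langle\nabla_v f,A\nabla_z g\rangle$. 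Together with $|\langle\varphi,\widetilde{\cA}\Pi_0 g\rangle|\lesssim\|\varphi\|\,\|\nabla_z g\|$, they give your inequality.

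\textbf{Shared input.} Both routes rest on $H^1$-coercivity of $\frac1\beta\nabla_z^*A^2\nabla_z$ on $L^2_0(\nu)$ and on the second-order estimate $\|\nabla_z^2 g\|_{L^2(\nu)}\lesssim\|\nabla_z^*A^2\nabla_z g\|_{L^2(\nu)}$. The paper proves the latter exactly as you sketch at the end. It applies the Bochner identity to $\phi=A^2\nabla g$, absorbs the $A'$-terms by Young's inequality, and handles the $\nabla^2V$ term through \eqref{eq:Vreg} and \cite[Lemma A.24]{V06}. This estimate is genuinely new here and not ``as in Lemma~\ref{lemma:hypocoercivity}'': there the Schur complement has constant coefficients.

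\textbf{What each route buys.} Yours makes the $\lambda$-dependence transparent and avoids the bookkeeping on $C_\lambda$ and $J$. It also gives the sharper bound $\|\nabla_v f\|\le\beta\lambda^{-1}K_\kappa^{-1}\|\varphi\|$ instead of $\bigo(\lambda^{-1/2})$. The same two-line argument applies verbatim to Lemma~\ref{lemma:hypocoercivity}, and it would improve the rates in Theorems~\ref{thm:overdamped_limit} and~\ref{thm:effective_od} to $\lambda^{-\alpha}$. The paper's route delivers invertibility and the bounds from a single abstract theorem with explicit constants.

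\textbf{Minor points.} The Schur complement is $+\frac1\beta\Pi_0\nabla_z^*A^2\nabla_z\Pi_0$; your middle expression has a sign slip, though the identification with $-\cLod$ is right. The weight $\|\,|\nabla V|\,\nabla_z u\|$ is not needed for bounding $\Pi_{\subplus}\widetilde{\cA}^2\Pi_0(-\cLod)^{-1}$; only $\nabla_z^2 u$ and $\nabla_z u$ enter.
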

    For completeness, we prove Lemma~\ref{lemma:hypocoercivity_bis} in Appendix~\ref{sec:proofs} below.

    To obtain the second item in Theorem~\ref{thm:effective_od}, we solve a linear SDE for~$v^\lambda$ and find
    \begin{equation}
        v_t^\lambda = \e^{-\lambda t}v_0 + \int_0^t \e^{-\lambda(t-s)}\left[\left(-A\nabla V + \frac1\beta\div\,A\right)(z_s^\lambda)\,\d s + \sqrt{\frac{2\lambda}{\beta}}\,\d W_s^\lambda\right].
    \end{equation}
    Then, for~$0\leq s < t\leq T$ and~$a>1$,
    \begin{equation}
        \left|Z_t^\lambda - Z_s^\lambda\right|^a \leq \|A\|_{L^\infty_{\mathrm{op}}}^a\left|\int_{\lambda s}^{\lambda t}v_r^\lambda\,\d r\right|^a\leq \|A\|_{L^\infty_{\mathrm{op}}}^a3^{a-1}\left(\left|\widetilde{I}_1(s,t)\right|^a+\left|\widetilde{I}_2(s,t)\right|^a+\left|\widetilde{I}_3(s,t)\right|^a\right).
    \end{equation}
    Controlling each of these three summands poses no additional difficulty compared to the previous analysis in which~$\lambda$ was a function of~$z$.
    In fact, since the fundamental solution process~$\widetilde{\cR}^\lambda_t = \e^{-\lambda t}$ is deterministic here, one can control~$|\widetilde{I}_3(s,t)|^a$ using the more direct Fubini argument of~\cite[Lemma 3.1]{WSW24} (which is correct in this case) instead of the computations following~\eqref{eq:i3_rewrite}.
    The resulting estimates imply the tightness of the family of path measures~$(Z^\lambda_*\P_{\mu_0})_{\lambda>0}$, and therefore the desired convergence of path distributions.

    Similar arguments should extend straightforwardly to underdamped dynamics of the form
        \begin{equation}
        \label{eq:effectiwe_ul_mod}
        \left\{\begin{aligned}
        \d y_t^\lambda &= A(y_t^\lambda)\,\nabla U(w_t^\lambda)\,\d t,\\
        \d w_t^\lambda &= \left(-A\nabla w + \frac1\beta\,\div A\right)(y_t^\lambda)\,\d t - \lambda D^{-1}(y_t^\lambda) \nabla U(w_t^\lambda)\,\d t + \sqrt{\frac{2\lambda}{\beta}}D^{-1/2}(y_t^\lambda)\,\d W_t^\lambda,
        \end{aligned}\right.
    \end{equation}
    \end{proof}
    with the second item in Theorem~\ref{thm:effective_od} generalizing at least to quadratic~$U$.

    The result of Theorem~\ref{thm:effective_od} is important from a modelling perspective, because it shows that the underdamped and coarse-grained approximations are non-commuting in general (see Figure~\ref{fig:non_commutativity}).
    Indeed, if we start from the dynamics~\eqref{eq:underdamped_friction_dependent} with~$D=M=\Id$ and take the overdamped limit~$\lambda\to +\infty$, we obtain the standard overdamped Langevin equation~(\eqref{eq:overdamped_friction_dependent} with~$D=\Id$). If one then takes the coarse-grained effective dynamics through~$\xi$, it is well-known~(see~\cite{LL10,ZHS16}) that the resulting dynamics is again of the form~\eqref{eq:overdamped_friction_dependent}, where the potential~$V$ is given by the free energy, and the diffusion~$D$ is given by the conditional expectation
    \begin{equation}
        a_\xi(z) = \int_{\xi^{-1}(z)}G\,\d\nu_z\neq \left(\int_{\xi^{-1}(z)} G^{1/2}\,\d\nu_z\right)^2 = A^2(z),
    \end{equation}
so that taking the overdamped limit of the coarse-grained underdamped dynamics gives a different result in general. The two approximations may nevertheless be close if the conditional (matrix-valued) variances~$\left({\mathrm{Var}}_{\nu_z}\left(G^{1/2}\right)\right)_{z\in\cX}$ are sufficiently small (for instance, they vanish when~$\xi$ is a linear map).

\begin{figure}
  \centering
  \[
    \begin{tikzcd}[row sep=5.5em, column sep=5.5em]
      (q_t^\lambda,p_t^\lambda)
        \arrow[r, "{\lambda\to +\infty,\; t\leftarrow \lambda t}"]
        \arrow[d, "{\text{CG}(\xi)}"']
      & X_t
        \arrow[d, dashed, red, "{\text{CG}(\xi)}"]
      \\
      (z_t^\lambda,v_t^\lambda)
        \arrow[r, "{\lambda\to +\infty,\; t\leftarrow \lambda t}"]
      & Z_t
    \end{tikzcd}
  \]
  \caption{Non-commutation of the overdamped and effective/coarse-grained dynamical approximations. Horizontal arrows correspond to overdamped approximations, while vertical arrows correspond to taking effective dynamics through the collective variable~$\xi$.
  The coarse-graining of~$X$ through~$\xi$ is not equal to~$Z$ for general~$\xi$.}
  \label{fig:non_commutativity}
\end{figure}
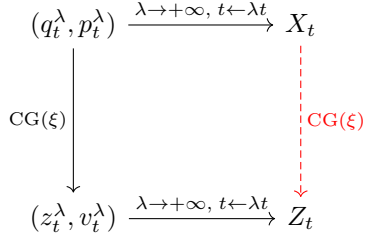

\section{Overdamped limit of some Langevin equations with position-dependent mass matrices}
\label{sec:mass_matrices}
In this section, we apply Theorem~\ref{thm:overdamped_limit} and Corollary~\ref{corr:pathwise} to study a case where the mass itself is position-dependent. In contrast to the dynamics~\eqref{eq:underdamped_friction_dependent} in Section~\ref{sec:intro}, the Hamiltonian is no longer separable into a sum of kinetic and potential terms.
We consider dynamics of the form
\begin{equation}
    \label{eq:general_langevin}
    \left\{\begin{aligned}
    \d q_t^\lambda &= \nabla_p H_M(q_t^\lambda,p_t^\lambda)\,\d t,\\
    \d p_t^\lambda &= -\nabla_q H_M(q_t^\lambda,p_t^\lambda)\,\d t -\lambda\Sigma(q_t^\lambda)\nabla_p H_M(q_t^\lambda,p_t^\lambda)\,\d t + \sqrt{\frac{2\lambda}{\beta}}\Sigma(q_t^\lambda)^{1/2}\,\d W_t^\lambda,
    \end{aligned}\right.
\end{equation}
on~$\cX\times\R^d$, where~$H_M$ is given by the Hamiltonian
\begin{equation}
    \label{eq:modified_hamiltonian}
    H_M(q,p) := \frac12 p^\top M^{-1}(q) p + E(q),\qquad E(q)=V(q) +\frac1{2\beta} \log\det M(q),
\end{equation}
with~$M,\Sigma:\cX\to\cSpd$ are smooth, symmetric positive-definite matrix fields. Physically, the variables~$(q,p)$ correspond respectively to the positions and momenta, and the matrix~$M$ to a position-dependent ``mass'' parameter. Under the corresponding Boltzmann--Gibbs measure
\begin{equation}
    \label{eq:mass_dependent_boltzmann_gibbs}
    \mu_M(q,p)\,\d q\,\d p = \frac1{Z}\e^{-\beta H_M(q,p)}\,\d q\,\d p,\qquad Z = (2\pi/\beta)^{d/2}\int_{\cX} \e^{-\beta V(q)}\,\d q
\end{equation}
the momentum variable~$p$ is distributed according to the Gaussian distribution~$\mathcal N\left(0,\frac{1}{\beta}M(q)\right)$ conditionally on the position variable~$q$.
The normalization constant in~\eqref{eq:mass_dependent_boltzmann_gibbs} is independent of~$M$, thanks to the term~$\frac1{2\beta}\log\det M(q)$ appearing in the potential~$E$.
Since~$H_M$ is non-separable, the Boltzmann--Gibbs measure can no longer be written as a product measure. However, the marginal in~$q$ is again the Gibbs measure~\eqref{eq:gibbs}, i.e.
\begin{equation}
    \int_{\R^d}\mu_M(q,p)\,\d p = \nu(q).
\end{equation}
For this reason, the dynamics~\eqref{eq:general_langevin} can be used to sample canonical configurations.
It is a natural alternative to~\eqref{eq:underdamped_friction_dependent} if one views~$M$ as a preconditioner, adapting the covariance in the momentum variable to anisotropic features of the target distribution.
In the case of a scalar friction~$\Sigma = \gamma>0$, these dynamics are related to the Riemann manifold Hamiltonian Monte Carlo (RMHMC) family of sampling algorithm, see~\cite{GC11} and~\cite[Section 3.2]{LSS24a} for in-depth discussions.
Position-dependent mass matrices also appear naturally when considering internal-coordinate molecular dynamics, see~\cite{VJ15}, in  which the state of the system is parametrized by ``internal'' degrees of freedom, such as nuclear bond lengths and torsion angles. In this setting, the dynamics~\eqref{eq:general_langevin} is the natural counterpart to the Cartesian equation~\eqref{eq:underdamped_friction_dependent}.

\paragraph{A canonical transformation.}
Here, we only consider the class of dynamics which can be transformed into one of the form~\eqref{eq:underdamped_friction_dependent} through a smooth diffeomorphism
\begin{equation}
    \label{eq:change_of_variables}
    \Gamma(q,p) = \left(x,v\right) = \left(x(q,p),v(q,p)\right).
\end{equation}
We restrict ourselves to transformations~$\Gamma$ satisfying the following three conditions.
\begin{itemize}
    \item{To come back to the original coordinates from the overdamped equation in $x$, the transformed position~$x\in\cX$ from~\eqref{eq:change_of_variables} should depend only on~$q$, i.e.
    \begin{equation}
    \label{eq:x_const_wrt_p}
    \nabla_p x = 0.
    \end{equation}}
    \item{In order to preserve the Hamiltonian nature of the dynamics in the case~$\lambda=0$, the change of variables should be a canonical transformation, meaning that it should satisfy locally the symplecticity condition
    \begin{equation}
    \label{eq:symplecticity}
    \nabla \Gamma^\top J \nabla\Gamma = J,\qquad J = \begin{pmatrix}
        0 & \mathrm{Id}_{\R^d}\\ -\mathrm{Id}_{\R^d}&0
    \end{pmatrix},
\end{equation}
see for example~\cite[Chapter 8]{A89}.}
    \item{
        Finally, to apply the pathwise convergence result of Theorem~\ref{thm:overdamped_limit}, the Hamiltonian should become separable, with the kinetic energy given by a quadratic form
        \begin{equation}
            \label{eq:separability}
            H_M(q,p) = \frac12 v^\top G_M v + V_M\circ x
        \end{equation}
        for some constant matrix~$G_M\in\cSpd$ and~$V_M:\cX\to\R$.
    }
\end{itemize}

Writing the symplecticity condition, we find
\begin{equation}
    \begin{pmatrix}
        \nabla_q x & 0 \\ \nabla_q v & \nabla_p v
    \end{pmatrix}^\top J \begin{pmatrix}
        \nabla_q x & 0 \\ \nabla_q v & \nabla_p v
    \end{pmatrix} = \begin{pmatrix}
    \nabla_q x^\top\nabla_q v -\nabla_q v^\top \nabla_q x & \nabla_q x^\top \nabla_p v \\ -\nabla_p v^\top\nabla_q x & 0
    \end{pmatrix},
\end{equation}
which imposes (since~$\nabla_q x$ must be invertible for~$\Gamma$ to be a diffeomorphism) that~${\nabla_q x = \left(\nabla_p v\right)^{-\top}}$, and that the matrix~$\left[\nabla_q x^\top \nabla_q v\right](q,p)$ is symmetric.
The first condition and~\eqref{eq:x_const_wrt_p} impose~${\nabla^2_p v=0}$, so that~${v(q,p)=A(q) p+b(q)}$ is affine in~$p$. Substituting this ansatz into~\eqref{eq:separability}, equating linear and quadratic parts in~$p$, we find
\begin{equation}
    \label{eq:factorization}
   v(q,p)=A(q)p,\qquad\forall\,q\in \cX,\qquad A(q)^{-\top} M^{-1}(q)A(q)^{-1} = G_M,\qquad b(q)=0.
\end{equation}
Then,~$\nabla_q x = A(q)^{-\top}$, so that the rows of~$A(q)^{-\top}$ must be gradients of smooth functions on~$\cX$. Therefore,~$x$ can be defined by
\begin{equation}
    \label{eq:q_definition}
    x(q) = q_0 + \int_{0}^1 A^{-\top}(\gamma(t))\gamma'(t)\,\d t,\qquad\forall\,\gamma\in\left\{f\in\mathcal C^\infty([0,1];\cX):\, f(0)=q_0,f(1)=q\right\},
\end{equation}
for any choice of base point~$q_0=x(q_0)$ and path~$\gamma$. One can check with some index computation (see Lemma~\ref{lemma:commutation}) that this condition in turn implies the local symmetry property
\begin{equation}
    \label{eq:symmetry_condition}
\nabla_q x^\top \nabla_q v = A^{-1}(q)\nabla_q v = \nabla_q v^\top \nabla_q x = \nabla_q v^\top A^{-\top},
\end{equation}
so that the symplecticity condition~\eqref{eq:symplecticity} is indeed verified for this choice of~$(x,v)$.

Various classes of mass matrices ensure that the factorization~\eqref{eq:factorization} holds, with~$A^{-\top}$ having gradient rows. When~$\cX=\R^d$, one can simply choose~$M=(\nabla^2 \Phi)^2$ for some smooth potential~$\Phi:\R^d\to\R$ with non-degenerate Hessian, in which case~$A=(\nabla^2 \Phi)^{-1}$ and~$G_M=\Id$, or~$M = (\Id + \nabla\theta)(\Id+\nabla\theta)^\top$ for some sufficiently small vector field~$\theta$, in which case~$A = (\Id+\nabla\theta)^{-1}$ and again~$G_M=\Id$.
More generally, this construction is valid when~$M$ factors as~$M=\nabla\Psi\nabla\Psi^\top$ for some smooth diffeomorphism~$\Psi:\cX\to\cX$.

\paragraph{An It\^o computation.}
We assume in this paragraph that the transformation~$\Gamma$ defined in~\eqref{eq:change_of_variables} by~\eqref{eq:factorization} by~\eqref{eq:q_definition} is both valid and smooth.

Denote by~$(x_t^\lambda,v_t^\lambda) := \Gamma(q_t^\lambda,p_t^\lambda)$. We apply It\^o's lemma to~\eqref{eq:general_langevin} in order to derive an SDE in these new variables.
The first simplification follows from noting that, since~$\Gamma(q,p)$ is linear with respect to~$p$, the partial Hessian of each component of~$\Gamma$ with respect to~$p$ vanishes -- this will simplify the computation by dropping the quadratic covariation terms.

We first write, using~$\nabla_q x = A^{-\top}(q)$,
\begin{equation}
    \d x_t^\lambda = \nabla_q x(q_t^\lambda)\nabla_p H_M(q_t^\lambda,p_t^\lambda)\,\d t = A^{-\top}(q_t^\lambda)M^{-1}(q_t^\lambda)p_t^\lambda\,\d t = G_M v_t^\lambda\,\d t,
\end{equation}
using~\eqref{eq:factorization} and the form~\eqref{eq:modified_hamiltonian} of the Hamiltonian in the third equality. Next,
\begin{equation}
    \begin{aligned}
    \d v_t^\lambda &= \nabla_q v(q_t^\lambda,p_t^\lambda)\,\d q_t^\lambda + \nabla_p v(q_t^\lambda,p_t^\lambda)\,\d p_t^\lambda\\
    &= \nabla_q v(q_t^\lambda,p_t^\lambda)M^{-1}(q_t^\lambda)p_t^\lambda\,\d t\\
    &+ A(q_t^\lambda)\left(-\nabla_q H_M(q_t^\lambda,p_t^\lambda)\,\d t -\lambda\Sigma(q_t^\lambda)\nabla_p H_M(q_t^\lambda,p_t^\lambda)\,\d t + \sqrt{\frac{2\lambda}{\beta}}\Sigma(q_t^\lambda)^{1/2}\,\d W_t^\lambda\right)\\
    \end{aligned}
\end{equation}
The fluctuation-dissipation term reads
\begin{equation}
    -\lambda A(q_t^\lambda)\Sigma(q_t^\lambda)M^{-1}(q_t^\lambda)p_t^\lambda\,\d t + \sqrt{\frac{2\lambda}{\beta}}A(q_t^\lambda)\Sigma(q_t^\lambda)^{1/2}\,\d W_t^\lambda = -\lambda \widetilde{\Sigma}(q_t^\lambda)G_M v_t^\lambda \,\d t+ \sqrt{\frac{2\lambda}{\beta}}\widetilde{\Sigma}(q_t^\lambda)^{1/2}\,\d W_t^\lambda,
\end{equation}
where~$\widetilde{\Sigma}(q) = A(q)\Sigma(q)A(q)^\top$, and~$\widetilde{\Sigma}(q)^{1/2}:=A(q)\Sigma(q)^{1/2}$ is clearly a square root for~$\widetilde{\Sigma}(q)$.

For the contribution of the force term, we write~$\nabla_q H_M(q,p) = \frac12 \nabla_q\left(p^\top M^{-1}(q)p\right) + \nabla E(q)$. Note that
\begin{equation}
    -A(q_t^\lambda)\nabla E(q_t^\lambda)\,\d t = -\left[\nabla_q x\right]^{-\top}\nabla_q E(q_t^\lambda)\,\d t = -\nabla_x\left(E\circ q\right)(x_t^\lambda)\,\d t,
\end{equation}
using~$A(q)^\top = \left[\nabla_q x\right]^{-1}(q)$, leaving a final term
\begin{equation}
    \nabla_q v(q_t^\lambda,p_t^\lambda)M^{-1}(q_t^\lambda)p_t^\lambda\,\d t - \frac12 A(q_t^\lambda)\nabla_q\left(p_t^{\lambda\top} M^{-1}(q_t^\lambda)p_t^\lambda\right)\,\d t.
\end{equation}
We write
\begin{equation}
    \begin{aligned}
        \frac12 A(q)\nabla_q\left(p^\top M^{-1}(q)p\right) &= \frac12 A(q)\nabla_q\left(v^\top G_M v\right)\\
        &= A(q)\left(\nabla_q v\right)^\top G_M v,
    \end{aligned}
\end{equation}
as well as
\begin{equation}
    \begin{aligned}
        \nabla_q v M^{-1}(q)p &= \nabla_q v A(q)^\top A(q)^{-\top} M^{-1}(q)A^{-1}(q)A(q)p\\
        &= \nabla_q v A(q)^\top G_M v.
    \end{aligned}
\end{equation}
Multiplying the symmetry condition~\eqref{eq:symmetry_condition} on the left by~$A(q)$ and on the right by~$A(q)^\top$, we see that~$\nabla_q v A(q)^\top=A(q)\nabla_q v^\top$, which shows the remaining term is zero.

Collecting terms,~$(x_t^\lambda,v_t^\lambda)$ satisfies the  following Langevin equation
\begin{equation}
    \label{eq:final_equation}
    \left\{\begin{aligned}
    \d x_t^\lambda &= G_M v_t^\lambda\,\d t,\\
    \d v_t^\lambda &= -\nabla V_M(x_t^\lambda)\,\d t - \lambda\Sigma_M(x_t^\lambda) G_M v_t^\lambda\,\d t + \sqrt{\frac{2\lambda}{\beta}}\Sigma_M(x_t^\lambda)^{1/2}\,\d W_t^\lambda,
    \end{aligned}\right.
\end{equation}
which is Equation~\eqref{eq:underdamped_friction_dependent} for the modified potential~$V=V_M$, kinetic energy~$U(v)=\frac12 v^\top G_M v$, and friction matrix~$D=\Sigma_M$ defined by
\begin{equation}
    \label{eq:modified_coefficients}
    V_M(x) = \left[E\circ q\right](x),\qquad \Sigma_M(x) = \left[A\Sigma A^\top\circ q\right](x),\qquad \Sigma_M(x)^{1/2} = \left[A\Sigma^{1/2}\circ q\right](x).
\end{equation}

\paragraph{Overdamped limit.}
We apply Corollary~\ref{corr:pathwise} to the transformed equation~\eqref{eq:final_equation}.
\begin{corollary}
    \label{thm:overdamped_mass_matrix}
    Assume that~$V:\cX\to\R$ satisfies Assumptions~\ref{hyp:smoothness} and~\ref{hyp:UVreg}, and that the transformation~$\Gamma$ defined in~\eqref{eq:change_of_variables} is a smooth symplectic transformation, where~$M$ satisfies a factorization of the form~\eqref{eq:factorization}.
    
    Assume moreover that~$M$,~$A$,~$\Sigma$ are smooth and~$\cW^{2,\infty}$, and that Assumption~\ref{hyp:elliptic} is satisfied for~$D=M$ (and therefore also for~$D=AA^\top$) and~$D=\Sigma^{-1}$.
    
    Suppose that, for each~$\lambda>0$, $(q_0^\lambda,p_0^\lambda)=(q_0,p_0)$, with~$(q_0,p_0)\sim \mu_{0}\in\cP(\cX\times\R^d)$, such that~$\mu_{0}\ll \mu_M$, and~$\d\mu_{0}/\d \mu_M\in L^p(\mu_{M})$ for some~$p>1$.

    For any~$\lambda>0$, let~$(q_t^\lambda,p_t^\lambda)_{t\geq 0}$ be the solution to~\eqref{eq:general_langevin}. Then, denoting~$(Q_t^\lambda)_{t\geq 0}=(q_{\lambda t}^\lambda)_{t\geq 0}$, it holds
    \begin{equation}
        Q^\lambda_*\P_{\mu_0} \xrightarrow[\lambda\to +\infty]{\text{weakly}} Q^M_*\P_{\mu_0}\,\text{ in }\cP(\mathcal C([0,T],\cX)),
    \end{equation}
    where~$Q^M_t = q(Z^M_t)$ and~$Z^M$ solves the SDE
    \begin{equation}
        \label{eq:od_mass}
        \d Z_t^M = -\Sigma_M^{-1}(Z_t^M)\nabla V_M(Z_t^M)\,\d t + \frac1\beta \,\mathrm{div}\,\Sigma_M^{-1}(Z_t^M)\,\d t + \sqrt{\frac{2}\beta}\Sigma_{M}^{-1/2}(Z_t^M)\,\d W_t,
    \end{equation}
    with~$Z_0^M\sim\nu$ independent from~$W$.
\end{corollary}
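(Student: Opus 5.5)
The plan is to reduce Corollary~\ref{thm:overdamped_mass_matrix} to Corollary~\ref{corr:pathwise}, applied to the transformed system~\eqref{eq:final_equation}, and then to pull the result back through the map $x\mapsto q(x)$. Matching~\eqref{eq:final_equation} with~\eqref{eq:underdamped_friction_dependent} gives the following dictionary:
\begin{itemize}
\item the potential is $V=V_M$;
\item the kinetic energy is $U(v)=\frac12 v^\top G_M v$, which satisfies Assumption~\ref{hyp:phys_kinetic} with mass matrix $G_M^{-1}$;
\item the inverse friction profile is $D=\Sigma_M^{-1}$, since the damping term reads $\lambda D^{-1}\nabla U=\lambda\Sigma_M G_M v$.
\end{itemize}
With $D=\Sigma_M^{-1}$, the limiting equation~\eqref{eq:overdamped_friction_dependent} is exactly~\eqref{eq:od_mass}. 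The noise in~\eqref{eq:final_equation} is $\Sigma_M^{1/2}:=A\Sigma^{1/2}\circ q$, which is a square root of $\Sigma_M$ but not the symmetric one. As noted after~\eqref{eq:underdamped_friction_dependent}, it is conjugate to the symmetric root by an orthogonal field, so up to a change of Brownian motion (which does not affect laws) we are in the setting of Corollary~\ref{corr:pathwise}. Since only path laws are claimed, this suffices.

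The steps are as follows.

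\emph{Initial law.} Since $\Gamma$ is symplectic, it preserves Lebesgue measure. By~\eqref{eq:separability}, $H_M\circ\Gamma^{-1}(x,v)=\frac12v^\top G_Mv+V_M(x)$. Hence $\Gamma_*\mu_M$ is the separable Gibbs measure $\mu$ associated with $(V_M,U)$. Moreover, $\d(\Gamma_*\mu_0)/\d\mu=(\d\mu_0/\d\mu_M)\circ\Gamma^{-1}$, which lies in $L^p(\mu)$, so the warm-start condition~\eqref{eq:warm_start} holds for $(x_0,v_0)=\Gamma(q_0,p_0)$. The limit is started from $x(q_0)$, whose law is the $x$-marginal; under $\mu_0=\mu_M$ this marginal is the Gibbs marginal, consistent with the statement.

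\emph{Coefficient assumptions.} Assumption~\ref{hyp:elliptic} must hold for $D=\Sigma_M^{-1}=(A^{-\top}\Sigma^{-1}A^{-1})\circ q$. The key fact is $\nabla_x q=(\nabla_q x)^{-1}=A^\top\circ q$, which is bounded with bounded derivatives. Hence $x\mapsto q(x)$ is bi-Lipschitz with $\cW^{2,\infty}$ bounds, and the chain rule together with the assumptions on $A,\Sigma,M$ gives uniform ellipticity and the $\cW^{2,\infty}$ bound. Hypoellipticity (Assumption~\ref{hyp:hypoellipticity}) follows from Lemma~\ref{lemma:hypoellipticity}, because $U$ is a nondegenerate quadratic form and $\Sigma_M$ has full rank.

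\emph{Main obstacle: Assumption~\ref{hyp:UVreg} for $V_M=V\circ q+\frac1{2\beta}\log\det M\circ q$.} From $A^{-\top}M^{-1}A^{-1}=G_M$ we get $\det M=\det(A)^{-2}\det(G_M)^{-1}$. Consequently $\e^{-\beta V_M(x)}\propto \e^{-\beta V(q(x))}|\det\nabla_x q(x)|$, i.e.\ $\nu_M=x_*\nu$. The individual conditions then transfer as follows.
\begin{itemize}
\item The moment bounds~\eqref{eq:moment_bounds} transfer by change of variables. Here one uses that $q$ is bi-Lipschitz and that $\nabla V_M=A\nabla V\circ q+$ a bounded term.
\item The Poincaré inequality for $\nu_M$ follows from that for $\nu$: writing $\psi\circ x$ and using that pushforward by a bi-Lipschitz map changes the Poincaré constant by at most $\|\nabla x\|_{L^\infty}^2$.
\item The pointwise bounds~\eqref{eq:Vreg} on $\nabla^2V_M$ and $\Delta V_M$ follow from the chain rule. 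The extra terms involve $\nabla^2 q$ (bounded) times $\nabla V$ (controlled by $|\nabla V_M|$), plus bounded contributions from $\log\det M$. The only delicate point is the constant $c_3\le1$ in the Laplacian bound: the quadratic gradient terms must be absorbed with the right constant, possibly at the price of enlarging $c_2$ via Young's inequality. I expect this bookkeeping to be the real work. If it fails, I would note that Lemma~\ref{lemma:hypocoercivity} only needs these bounds to control $\nabla^2V$, and impose them directly on $V_M$.
\item The conditions on $U$ are immediate since $U$ is quadratic and $\kappa$ is Gaussian.
\end{itemize}

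\emph{Conclusion.} Corollary~\ref{corr:pathwise} then gives $X^\lambda_*\P_{\mu_0}\to Z^M_*\P_{\mu_0}$ weakly in $\cP(\cC([0,T];\cX))$, where $X^\lambda_t=x(q^\lambda_{\lambda t})$. Since $Q^\lambda=q\circ X^\lambda$ and composition with the continuous map $q$ is continuous on $\cC([0,T];\cX)$, the continuous mapping theorem yields $Q^\lambda_*\P_{\mu_0}\to (q\circ Z^M)_*\P_{\mu_0}=Q^M_*\P_{\mu_0}$.
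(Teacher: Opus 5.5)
Your proposal is correct and follows the paper's proof. You transfer the warm-start condition through the volume-preserving map $\Gamma$ using $\Gamma_*\mu_M=\mu$, apply Corollary~\ref{corr:pathwise} to~\eqref{eq:final_equation} with $D=\Sigma_M^{-1}$ (the reading that~\eqref{eq:od_mass} requires), and push the convergence through the continuous map $f\mapsto q\circ f$. Your observation $\nu_M=x_*\nu$, combined with the bi-Lipschitz pushforward, handles the Poincaré inequality more carefully than the paper's bounded-perturbation remark, which ignores the change of variables.

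Two remarks. The $c_3$ bookkeeping you flag is trivial: $\Delta V_M\le\sqrt d\,\|\nabla^2 V_M\|_{\mathrm{HS}}$ grows at most linearly in $|\nabla V_M|$, so Young's inequality gives the Laplacian bound in~\eqref{eq:Vreg} with any $c_3\in(0,1]$. By contrast, the global Lipschitz continuity of $\nabla V_M$ demanded by Assumption~\ref{hyp:smoothness} is checked neither by you nor by the paper, and it can fail on $\cX=\R^d$: $\nabla_x^2 V_M$ contains the term $\sum_k \partial_k E(q(x))\,\nabla_x^2 q_k(x)$, which is unbounded in general when $\nabla V$ is.
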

\begin{proof}
Denote~$Z^{M,\lambda}_t = x_{\lambda t}^\lambda$, where~$(x^\lambda,v^\lambda)$ solves the SDE~\eqref{eq:final_equation}.

Since~$M$ and~$\log\det M$ are uniformly bounded on~$\cX$, it is straightforward to check that the coefficients~$V_M$ and $\Sigma_M$ in Equation~\eqref{eq:final_equation} and the kinetic energy~$U_M(v)=\frac12v^\top G_M v$ satisfy Assumptions~\ref{hyp:smoothness},~\ref{hyp:elliptic} and~\ref{hyp:UVreg} of Theorem~\ref{thm:overdamped_limit} (in particular, the Poincaré inequality in~$\nu$~\eqref{eq:poincare} is stable under bounded perturbations of the potential energy~$V$).
Furthermore, Assumption~\eqref{hyp:phys_kinetic} is vacuously satisfied, as well as Assumption~\ref{hyp:hypoellipticity} using Lemma~\ref{lemma:hypoellipticity}.

Since~$\Gamma$ is a symplectic transformation,~$|\det\,\nabla \Gamma|=1$, and one easily checks that~$\Gamma_*\mu_0 \ll \mu$ with~${\frac{\d(\Gamma_*\mu_0)}{\d\mu} = \left(\frac{\d\mu_0}{\d\mu_M}\right)\circ \Gamma^{-1}\in L^p(\mu)}$, using the Lebesgue change of variables formula, since~$\Gamma_*\mu_M=\mu$. Therefore the condition from Theorem~\ref{thm:overdamped_limit} on the initial distribution~$(x_0^\lambda,v_0^\lambda)\sim \Gamma_*\mu_0$ is satisfied.

Since~$q\circ Z^{M,\lambda}=Q^\lambda$ by the computation of the previous paragraph,~$(q\circ Z^{M,\lambda})_*\P_{\mu_0} = Q^\lambda_*\P_{\mu_0}$ for all~$\lambda\geq 0$. Using the continuity of the mapping~$f\mapsto q\circ f$ on~$\mathcal C([0,T];\cX)$, we take the weak limit of this identity as~$\lambda\to +\infty$ using Corollary~\ref{corr:pathwise}, giving the convergence
$Q^\lambda_*\P_{\mu_0}\to (q\circ Z^M)_*\P_{\mu_0}$ weakly in~$\cP(\mathcal C([0,T];\cX))$, where~$Z^M$ satisfies the SDE~\eqref{eq:od_mass}.
\end{proof}

\paragraph{Computation in the case~$\cX=\R$.}
The limiting dynamics from Corollary~\ref{thm:overdamped_mass_matrix} is not fully explicit, but since~$\Gamma$ is one-to-one, it is closed with respect to the process~$Q_t^M$. 
We find these explicit dynamics in the one-dimensional case~$\cX=\R$, noting that similar computations could be performed in the higher-dimensional case.

We consider the Hamiltonian~\eqref{eq:modified_hamiltonian}, which we denote here by
\begin{equation}
    \frac{p^2}{2m(q)} + w(q) + \frac1{2\beta}\log m(q),
\end{equation}
and the corresponding dynamics~\eqref{eq:general_langevin} given by
\begin{equation}
    \left\{\begin{aligned}
    \d q_t^\lambda &= \frac{p_t^\lambda}{m(q_t^\lambda)}\,\d t,\\
    \d p_t^\lambda &= \frac{(p_t^\lambda)^2}{2}\frac{m'(q_t^\lambda)}{m(q_t^\lambda)^2}\,\d t-w'(q_t^\lambda)\,\d t - \frac1{2\beta}\frac{m'(q_t^\lambda)}{m(q_t^\lambda)}\,\d t -\lambda \frac{\sigma(q_t^\lambda)}{m(q_t^\lambda)}p_t^\lambda\,\d t + \sqrt{\frac{2\lambda\sigma(q_t^\lambda)}{\beta}}\,\d B_t,
    \end{aligned}\right.
\end{equation}
where~$m,m',\sigma,w'$ are smooth and globally Lipschitz on~$\R$,~$\varepsilon\leq m,\sigma\leq \varepsilon^{-1}$ for some~$\varepsilon>0$, and~$B$ is a Wiener process.
The condition~\eqref{eq:factorization} is verified vacuously for~$a(q)=m(q)^{-1/2}$, with
\begin{equation}
    \label{eq:change_of_variables_1d}
    \Gamma(q,p) = (x,v) = \left(\int_0^q\sqrt{m(t)}\,\d t,p/\sqrt{m(q)}\right).
\end{equation}

By Corollary~\ref{thm:overdamped_mass_matrix}, the trajectories of the process~$x \circ q^\lambda$ converge in law in the limit~$\lambda\to +\infty$ to solutions of the one-dimensional SDE
\begin{equation}
    \begin{aligned}
        \label{eq:od_1D_increment}
        \d Z_t &= -q'(Z_t)\frac{w'}{a^2\sigma} (q (Z_t))\,\d t - q'(Z_t)\frac{m'}{2\beta m a^2\sigma}(q(Z_t))\,\d t + \frac{1}{\beta}\left(\frac{1}{a^2\sigma}\circ q\right)'(Z_t)\,\d t + \sqrt{\frac{2}{\beta}}\frac{1}{a\sqrt{\sigma}}(q(Z_t))\,\d B_t\\
        & = -\frac{w'\sqrt{m}}{\sigma} (q (Z_t))\,\d t - \frac{m'}{2\beta\sigma\sqrt{m}}(q(Z_t))\,\d t + \frac{1}{\beta}\left(\frac{m}{\sigma}\circ q\right)'(Z_t)\,\d t + \sqrt{\frac{2 m}{\beta \sigma}}(q(Z_t))\,\d B_t\\
        &=-\frac{w'\sqrt{m}}{\sigma} (q (Z_t))\,\d t - \frac{m'}{2\beta\sigma\sqrt{m}}(q(Z_t))\,\d t + \frac{1}{\beta}\left[\frac{m'}{\sqrt{m}\sigma}-\frac{\sigma'\sqrt{m}}{\sigma^2}\right](q(Z_t))\,\d t +  \sqrt{\frac{2 m}{\beta \sigma}}(q(Z_t))\,\d B_t,
    \end{aligned}
\end{equation}
using~$q' = \frac1{\sqrt{m}}\circ q$ in the last two lines. We now apply It\^o's formula with~$f(z)=q(z)$.
Denoting~$Q_t = q(Z_t)$,~$\d Q_t = q'(Z_t)\d Z_t + \frac{m}{\sigma\beta}(q(Z_t)) q''(Z_t)\,\d t$, substituting~\eqref{eq:od_1D_increment}, and using~${q'' = -\frac{m'}{2m^2}\circ q}$, we find after a simple computation that
\begin{equation}
    \d Q_t = - \left[\frac{w'}{\sigma} +\frac1\beta\frac{\sigma'}{\sigma^2}\right](Q_t)\,\d t + \sqrt{\frac{2}{\beta \sigma}}(Q_t)\,\d B_t,
\end{equation}
which corresponds to the dynamics~\eqref{eq:overdamped_friction_dependent} with the choice~$D=\sigma^{-1}$, and is interestingly independent from the choice of~$m$.

\appendix
\section{Proofs of technical results}
\label{sec:proofs}
In this appendix, we gather the proofs of some useful technical lemmas.
\paragraph{Proof of Lemma~\ref{lemma:hypocoercivity}.}
We prove the uniform-in-$\lambda$ $\wLmuz$-hypocoercivity estimates used in the proof of Theorem~\ref{thm:overdamped_limit}.
\begin{proof}
    We adapt the proof of~\cite[Theorem 3.3]{BFLS22}, and refer to it for further details. Instead of the Ornstein--Uhlenbeck generator~$\mathcal L_{\mathrm{FD}} = -\frac1\beta\nabla_p^*\nabla_p$, the fluctuation-dissipation operator is given by~$\cLou = -\frac1\beta\nabla_p^* D^{-1}\nabla_p$ , and the uniform bound~\eqref{eq:elliptic} will allow to adapt estimates involving~$\cL_{\mathrm{FD}}$ to~$\cLou$.
    Note that since~$\cLham^* \1 = \cLou^* \1 = 0$, both~$\cLham$ and~$\cLou$ preserve~$\wLmuz$.
    As in~\cite{BFLS22}, we write~$\Pi_{\subplus}=\Id-\Pi_0$, and~$A_{\alpha\alpha'}=\Pi_\alpha A\Pi_{\alpha'}$ for any operator~$A$ on~$\wLmuz$ and~$\alpha,\alpha'\in\{0,\subplus\}$. Note that in~\cite[Theorem 3.3]{BFLS22}, the notation $\cS$ corresponds to the symmetric part of the generator, $\gamma\cLou$, while we factor the friction coefficient $\lambda$ out, so that the symmetric part is denoted~$\lambda\cS$ in this work.
    
    To check that~\cite[Assumption 2.2]{BFLS22} holds (which is the microscpic coercivity condition from~\cite{DMS15}), we make the choice
    \begin{equation}
        \label{eq:s_bound}
        s = \lambda\frac{K_\kappa^2}{M_D \beta},
    \end{equation}
    whose validity follows from Assumptions~\ref{hyp:elliptic} and the Poincar\'e inequality~\eqref{eq:poincare} for~$\kappa$.

    The verification of~\cite[Assumption 2.3]{BFLS22} (the macroscopic coercivity condition) is unchanged, since the antisymmetric part~$\cLham$ does not involve~$D$.
    
    Similarly, the microscopic reversibility condition~\cite[Assumption 2.5]{BFLS22} is still satisfied with the momentum-reversal operator $\cR\varphi(q,p)=\varphi(q,-p)$.

    We now check~\cite[Assumption 2.6]{BFLS22}. This involves showing the boundedness of two auxiliary operators, defined by
    \begin{equation}
        A_\lambda=\lambda\Pi_1\cLou\Pi_1,\qquad B_\lambda=\lambda\Pi_2\cLou\cLham\Pi_0\left(\cLham_{\subplus 0}^*\cLham_{\subplus 0}\right)^{-1}+\Pi_2\cLham^2\Pi_0\left(\cLham_{\subplus 0}^*\cLham_{\subplus 0}\right)^{-1},
    \end{equation}
    where~$\Pi_1$ is the orthogonal projector onto~$\mathrm{Ran}\cLham_{\subplus 0} \subset \Pi_{\subplus}\wLmuz$ and~$\Pi_2$ is the projector onto its orthogonal complement in~$\Pi_{\subplus}\wLmuz$.
    The operators $A_\lambda$ and $B_\lambda$ correspond respectively to $\cS_{11}$ and~$\cL_{21}\cA_{10}(\cA_{\subplus 0}^*\cA_{\subplus 0})^{-1}$ in the notation of~\cite{BFLS22}.
    
    To show that~$A_\lambda$ is bounded, we compute (as in~\cite{BFLS22})
    \begin{equation}
        -\cLham_{\subplus 0}^*\cLou\cLham_{\subplus 0} = \frac1\beta \nabla_q^* \mathcal N\nabla_q\Pi_0,\qquad \mathcal N(q) = \int_{\R^d}\nabla^2 U D^{-1}(q)\nabla^2 U\,\d \kappa,
    \end{equation}
    and the entries of this matrix~$\mathcal N$ are bounded as multiplication operators on~$L^2(\nu)$ from Assumption~\ref{hyp:elliptic}. Therefore, the same factorization argument as in~\cite[Theorem 3.3]{BFLS22} shows that~$A_\lambda$ is bounded, with a bound
    \begin{equation}
        \label{eq:Aop_bound}
        \left\|A_\lambda\right\|_{1,1}= \lambda\left\|A_1\right\|_{1,1} < +\infty,
    \end{equation}
    where~$\|\cdot\|_{\alpha,\alpha'}$ denotes the~$\left(\Pi_{\alpha'}\wLmuz\to\Pi_\alpha\wLmuz\right)$-operator norm.

    To show the boundedness of~$B_\lambda=\lambda G+H$, we only need to check that
    \begin{equation}
        \label{eq:Gop}
        G = \Pi_2\cLou\cLham\Pi_0\left(\cLham_{\subplus 0}^*\cLham_{\subplus 0}\right)^{-1}
    \end{equation} is~$\|\cdot\|_{2,0}$-bounded, since
    \begin{equation}
        \label{eq:Hop}
        H = \Pi_2\cLham^2\Pi_0\left(\cLham_{\subplus 0}^*\cLham_{\subplus 0}\right)^{-1}
    \end{equation}
    is a~$\|\cdot\|_{2,0}$-bounded operator which does not involve~$D$, following the same arguments as in the proof of~\cite[Theorem 3.3]{BFLS22}.

    To show the bound on~$G$, we compute
    \begin{equation}
        \lambda\cLou\cLham\Pi_0 = \lambda\mathscr{T}\nabla_q,\qquad \mathscr{T}(q)=-\frac{1}{\beta^2}\nabla_p^*D^{-1}(q)\nabla_p\nabla_p^*\Pi_0,
    \end{equation}
    and the entries of the row-vector~$\mathscr{T}$ are bounded operators on~$\wLmuz$, since~$D^{-1}(q)$ is a matrix of bounded operators on~$L^2(\nu)$, whose entries commute with any of the~$\partial_{p_i}$,~$\partial_{p_i}^*$.
    From here, the boundedness of~$G$ and therefore of~$B_\lambda$ follows using Assumption~\ref{hyp:UVreg}, with the same factorizations and commutation relations as in the proof of~\cite[Theorem 3.3]{BFLS22}.
    In particular we get
    \begin{equation}
        \label{eq:Bop_bound}
        \left\|B_\lambda\right\|_{2,0} \leq \lambda \|G\|_{2,0} + \|H\|_{2,0} < +\infty.
    \end{equation}

    We now show the uniform-in-$\lambda$ estimates~\eqref{eq:hypocoercivity}. In~\cite[Equation 2.3]{BFLS22}, the authors compute the block decomposition of~$\cL_\lambda^{-1}$ on~$\wLmuz=\Pi_0\wLmuz\oplus\Pi_{\subplus}\wLmuz$, which is given by
    \begin{equation}
        \label{eq:block_inverse}
        \cL_\lambda^{-1} = \begin{pNiceArray}{c|c}
        \mathfrak{S}_{0,\lambda}^{-1}&-\mathfrak{S}_{0,\lambda}^{-1}\cLham_{0\subplus}\left[\cL_\lambda^{-1}\right]_{\subplus\subplus}\\
        \hline
        -\left[\cL_\lambda^{-1}\right]_{\subplus\subplus}\cLham_{\subplus 0}\mathfrak{S}_{0,\lambda}^{-1}&\left[\cL_\lambda^{-1}\right]_{\subplus\subplus}+\left[\cL_\lambda^{-1}\right]_{\subplus\subplus}\cLham_{\subplus 0}\mathfrak{S}_{0,\lambda}^{-1}\cLham_{0\subplus}\left[\cL_\lambda^{-1}\right]_{\subplus\subplus}
    \end{pNiceArray},
    \end{equation} 
    where~$\mathfrak{S}_{0,\lambda} = \cLham_{\subplus 0}^*\left[\cL_\lambda^{-1}\right]_{\subplus\subplus}\cLham_{\subplus 0}$ is the Schur complement associated with this decomposition, acting on~$\Pi_0\wLmuz$.
    The proof of~\cite[Theorem 2.7]{BFLS22} shows that all these blocks are bounded operators from~$\wLmuz$ to~$\wLmu$. The top-left block~$\mathfrak{S}_{0,\lambda}^{-1}$ can be shown to be of order~$\lambda$ in operator norm, while the bottom-right block is of order~$\lambda^{-1}$, and the off-diagonal blocks are of order~$1$.
    Therefore, general hypocoercive estimates only provide a bound on the operator norm~$\left\|\cL_\lambda^{-1}\right\|_{\wLmuz\to\wLmuz} =\bigo(\max(\lambda,\lambda^{-1}))$.
    
    However, in the specific case~$\varphi\in \Pi_{\subplus} L_0^2(\mu)$, some terms (namely those corresponding to the left-column of the decomposition~\eqref{eq:block_inverse}) will be dropped from the final estimate for~$\|\cL_\lambda^{-1}\varphi\|_{\wLmu}$, which leads to an estimate~$\left\|\cL_\lambda^{-1}\right\|_{\Pi_{\subplus}\wLmuz\to\wLmuz} = \bigo(1)$ in the limit~$\lambda\to +\infty$.
    More precisely, in this case,
    \begin{equation}
        \cL_\lambda^{-1}\varphi = \left(-\mathfrak{S}_{0,\lambda}^{-1}\cLham_{0\subplus}\left[\cL_\lambda^{-1}\right]_{\subplus\subplus}+\left[\cL_\lambda^{-1}\right]_{\subplus\subplus}+\left[\cL_\lambda^{-1}\right]_{\subplus\subplus}\cLham_{\subplus 0}\mathfrak{S}_{0,\lambda}^{-1}\cLham_{0\subplus}\left[\cL_\lambda^{-1}\right]_{\subplus\subplus}\right)\varphi,
    \end{equation}
    where the proof of~\cite[Theorem 2.7]{BFLS22} shows that
    \begin{equation}
        \label{eq:finalbound_A}
        \left\|\left[\cL_\lambda^{-1}\right]_{\subplus\subplus}+\left[\cL_\lambda^{-1}\right]_{\subplus\subplus}\cLham_{\subplus 0}\mathfrak{S}_{0,\lambda}^{-1}\cLham_{0\subplus}\left[\cL_\lambda^{-1}\right]_{\subplus\subplus}\right\|_{\subplus,\subplus}\leq \frac{2}{s}
    \end{equation}
    in operator norm on~$\wLmuz$, with~$s$ is defined in~\eqref{eq:s_bound}. The remaining term is bounded by
    \begin{equation}
        \label{eq:finalbound_B}
        \left\|\mathfrak{S}_{0,\lambda}^{-1}\cLham_{0\subplus}\left[\cL_\lambda^{-1}\right]_{\subplus\subplus}\right\|\leq \sqrt{\frac{\left\|\mathfrak{S}_{0,\lambda}^{-1}\right\|_{0,0}}{s}}\leq s^{-1/2}\sqrt{\frac{\|A_\lambda\|_{1,1}}{a^2}+b\frac{\|B_\lambda\|_{2,0}}{s}},
    \end{equation}
    where the constants~$a,b$ are independent from~$\lambda$. From the bounds~\eqref{eq:finalbound_A} and~\eqref{eq:finalbound_B}, the first estimate in~\eqref{eq:hypocoercivity} follows from the linear scaling in~$\lambda$ of each of~$s$,~$\|A_\lambda\|$ and~$\|B_\lambda\|$ as~$\lambda\to +\infty$, which follows from~\eqref{eq:s_bound},~\eqref{eq:Aop_bound} and~\eqref{eq:Bop_bound}.

    To estimate~$\nabla_p \cL_\lambda^{-1}\varphi$, we have from an integration by parts that
    \begin{equation}
        \begin{aligned}
            \left\langle\varphi,\cL_\lambda^{-1}\varphi \right\rangle_{\wLmu} &= \lambda\left\langle\cLou\cL_\lambda^{-1}\varphi,\cL_\lambda^{-1}\varphi\right\rangle_{\wLmu}\\
            &= -\frac{\lambda}{\beta}\mu\left(\left[\nabla_p\cL_\lambda^{-1}\varphi\right]^\top D^{-1}\left[\nabla_p\cL_\lambda^{-1}\varphi\right]\right)\\
            &\leq -\frac{\lambda}{\beta M_D}\|\nabla_p \cL_\lambda^{-1}\varphi\|^2_{\wLmu},
        \end{aligned}
    \end{equation}
    using Assumption~\ref{hyp:elliptic} in the last line. From this bound, a Cauchy--Schwarz inequality gives
    \begin{equation}
        \label{eq:final_bound_grad_p}
        \left\|\nabla_p \cL_\lambda^{-1}\varphi\right\|^2_{\wLmu}\leq \frac{\beta M_D}{\lambda}\|\varphi\|_{\wLmu}\|\cL_\lambda^{-1}\varphi\|_{\wLmu} \leq \frac{C_1\beta M_D}{\lambda}\|\varphi\|_{\wLmu}^2,
    \end{equation}
    using the previously derived uniform bound~$\|\cL_\lambda^{-1}\varphi\|_{\wLmu} \leq C_1\|\varphi\|_{\wLmu}$.
\end{proof}

\paragraph{Proof of Lemma~\ref{lemma:hypocoercivity_bis}.}
We prove the uniform-in-$\lambda$ $\wLmuz$-hypocoercivity estimates used in the proof of Theorem~\ref{thm:effective_od}.

\begin{proof}[Proof of Lemma~\ref{lemma:hypocoercivity_bis}]
    In accordance with the notation of Section~\ref{sec:coarse_graining}, we use the notation~$(z,v)$ for the position and momentum variables, instead of~$(q,p)$.

    We again adapt the proof of~\cite[Theorem 3.3]{BFLS22}. The generator can now be decomposed into its antisymmetric and symmetric components as
    \begin{equation}
        \widetilde{\cL}_\lambda = \widetilde{\cA} + \lambda \widetilde{\cS},\qquad \widetilde{\cA} = \frac1\beta\left(\nabla_v^*A\nabla_z - \nabla_z^*A\nabla_v\right),\qquad \widetilde{\cS} = -\frac1\beta \nabla_v^*\nabla v,
    \end{equation}
    and the modification with respect to~\cite{BFLS22} now concerns the antisymmetric part only. Again we factor out the friction coefficient, so that our operator $\lambda\widetilde{\cL}$ corresponds to the operator $\cS$ in the notation of~\cite{BFLS22}.
    
    The verification of~\cite[Assumption 2.1]{BFLS22} is immediate, and that of~\cite[Assumption 2.2]{BFLS22} is unchanged.
    
    The microscopic coercivity constant from~\cite[Assumption 2.2]{BFLS22} can be taken equal to
    \begin{equation}
        \label{eq:micro_coercivity_constant}
        s = \lambda\frac{K_\kappa^2}{\beta}.
    \end{equation}

    To check the macroscopic coercivity condition (\cite{BFLS22}[Assumption 2.3]), we write by a direct computation,
    \begin{equation}
        \widetilde{\cA}^*_{\subplus 0}\widetilde{\cA}_{\subplus 0} = \frac1{\beta^2}\Pi_0\nabla_z^* A\nabla_v\nabla_v^*A\nabla_z\Pi_0
    \end{equation}
    since, for any~$1\leq i,j\leq d$,~$[\left(\nabla_z^* A\right)_i,\partial_{v_j}]=[\left(A\nabla_z\right)_i,\partial_{v_j}^*]=0$, and using~$[\partial_{v_i},\partial_{v_i}^*] = \beta \delta_{ij}$ (recall~$U$ is the standard quadratic energy here), we find
    \begin{equation}
        \label{eq:hyp_2.3_bis}
        \widetilde{\cA}^*_{\subplus 0}\widetilde{\cA}_{\subplus 0} = \frac1\beta\Pi_0\nabla_z^* A^2 \nabla_z\Pi_0
    \end{equation}
    whence, for~$\varphi\in\testfuncs(\cX\times\R^d)$,
    \begin{equation}
        \|\widetilde{\cA}_{\subplus 0}\varphi\|^2 = \frac1\beta\langle A^2\nabla_z\Pi_0\varphi,\nabla_z\Pi_0\varphi\rangle \geq \frac{K_\nu^2}{\beta M_A^2}\|\Pi_0\varphi\|^2,
    \end{equation}
    where we use the Poincar\'e inequality for~$\nu$~\eqref{eq:poincare} and the bound from Assumption~\ref{hyp:elliptic} (with constant~$M_A>0$) in the last inequality.

    The standard choice of the momentum-reversal involution~$\cR:\varphi(q,p)\mapsto \varphi(q,-p)$ again ensures the validity of~\cite[Assumption 2.5]{BFLS22}, so it remains to show~\cite[Assumption 2.6]{BFLS22}, that is the boundedness of the operators
    \begin{equation}
        \label{eq:def_CD}
        C_\lambda=\lambda\widetilde{\Pi}_1\widetilde{\cS}\widetilde{\Pi}_1,\qquad D_\lambda=\lambda\widetilde{\Pi}_2\widetilde{\cS}\widetilde{\cA}\Pi_0\left(\widetilde{\cA}_{\subplus 0}^*\widetilde{\cA}_{\subplus 0}\right)^{-1}+\widetilde{\Pi}_2{\widetilde{\cA}^2}\Pi_0\left(\widetilde{\cA}_{\subplus 0}^*\widetilde{\cA}_{\subplus 0}\right)^{-1},
    \end{equation}where again~$\widetilde{\Pi}_1,\widetilde{\Pi}_2$ are orthogonal projectors, onto~$\mathrm{Ran}\,\widetilde{\cA}_{\subplus 0}$ and its~$\Pi_{\subplus}\wLmuz$-orthogonal respectively. Some of the computations are in fact simpler than in~\cite{BFLS22}, due to the choice of kinetic energy~$U(v)=\frac12|v|^2$. The operators~$C_\lambda$ and~$D_\lambda$ correspond respectively to~$cS_{11}$ and~$\cL_{21}\cA_{10}(\cA_{\subplus 0}^*\cA_{\subplus 0})^{-1}$ in the notation of~\cite{BFLS22}.

    Using the same commutator identities as the ones leading to~\eqref{eq:hyp_2.3_bis}, we write
    \begin{equation}
        \label{eq:AstarA_identity}
        -\widetilde{\cA}^*_{\subplus 0}\widetilde{S}\widetilde{\cA}_{\subplus 0} = \frac1\beta \Pi_0\nabla_z^*A^2\nabla_z\Pi_0 = \frac1\beta \widetilde{\cA}^*_{\subplus 0}\widetilde{\cA}_{\subplus 0},
    \end{equation}
    and, using the explicit expression
    \begin{equation}
        \widetilde{\Pi}_1 = \widetilde{\cA}_{\subplus 0}\left(\widetilde{\cA}_{\subplus 0}^*\widetilde{\cA}_{\subplus 0}\right)^{-1}\widetilde{\cA}_{\subplus 0}^*,
    \end{equation}
    we find
    \begin{equation}
        \widetilde{\Pi}_1\widetilde{\cS}\widetilde{\Pi}_1 = -\frac1\beta\widetilde{\Pi}_1,\text{ so that }\|C_\lambda\|_{\widetilde{\Pi_1}\wLmuz\to\widetilde{\Pi_1}\wLmuz} \leq \frac{\lambda}{\beta}
    \end{equation}
    in the~$\wLmuz$ operator norm.

    Turning to~$D_\lambda=\lambda J + K_A$, we first bound
    \begin{equation}
        \label{eq:J_def}
        \begin{aligned}
        J=\widetilde{\Pi}_2\widetilde{\cS}\widetilde{\cA}\Pi_0\left(\widetilde{\cA}_{\subplus 0}^*\widetilde{\cA}_{\subplus 0}\right)^{-1}&=-\widetilde{\Pi}_2\nabla_v^*\nabla_v\nabla_v^* A\nabla_z\Pi_0\left({\widetilde{\mathcal A}}^*_{\subplus 0}{\widetilde{\mathcal A}}_{\subplus 0}\right)^{-1}\\
        &=-\Pi_2\nabla_v^*\nabla_v\nabla_v^*\Pi_0A\nabla_z\left({\widetilde{\mathcal A}}^*_{\subplus 0}{\widetilde{\mathcal A}}_{\subplus 0}\right)^{-1}.
        \end{aligned}
    \end{equation}

    As in the proof of~\cite[Theorem 3.3]{BFLS22}, we note that~$\nabla_z\left({\widetilde{\mathcal A}}^*_{\subplus 0}{\widetilde{\mathcal A}}_{\subplus 0}\right)^{-1}$ is component-wise bounded from~$\Pi_0\wLmuz \cong L_0^2(\nu)$ to~$\wLmuz$. This follows from the~$H^1(\nu)$-coercivity property (ensured by Assumption~\ref{hyp:elliptic}) of the bilinear form
    $$a(u,v)=\int_{\cX}\nabla_z u\cdot A^2 \nabla_z v\,\d\nu$$
    associated with~$L=\widetilde{\cA}^*_{\subplus 0}\widetilde{\cA}_{\subplus 0}$, using standard arguments (bound~$a(u,u)$ with~$Lu=f$ from below using coercivity and above using Cauchy--Schwarz and the Poincar\'e inequality for~$\nu$).
    Since~$A$ has uniformly bounded entries, the column of operators~$A\nabla_z\left({\widetilde{\mathcal A}}^*_{\subplus 0}{\widetilde{\mathcal A}}_{\subplus 0}\right)^{-1} $ is also component-wise bounded. The arguments of~\cite[Theorem 3.3]{BFLS22} can then be used verbatim to show that, component-wise, the row of operators~$\nabla_v^*\nabla_v\nabla_v^*\Pi_0$ is bounded, which finally shows that~$J$ is bounded as a product of bounded operators.

    It remains to show that the operator
    $$
    K_A=\widetilde{\Pi}_2{\widetilde{\cA}^2}\Pi_0\left(\widetilde{\cA}_{\subplus 0}^*\widetilde{\cA}_{\subplus 0}\right)^{-1}=\widetilde{\Pi}_2\Pi_{\subplus}{\widetilde{\cA}^2}\Pi_0\left(\widetilde{\cA}_{\subplus 0}^*\widetilde{\cA}_{\subplus 0}\right)^{-1}
    $$
    is~$\Pi_0\wLmuz\to \widetilde{\Pi}_2\wLmuz$-bounded, and in fact we will show the (stronger)~$\|\cdot\|_{\subplus,0}$-boundedness of~$\Pi_{\subplus}{\widetilde{\cA}^2}\Pi_0\left(\widetilde{\cA}_{\subplus 0}^*\widetilde{\cA}_{\subplus 0}\right)^{-1}$.
    We first compute, with~$\varphi\in\testfuncs(\cX\times\R^d)$,
    \begin{equation}
        \widetilde{\cA}^2\Pi_0\varphi = \frac1\beta\left[\left(-\nabla v+\beta v\right)^\top A \nabla_v\left(v^\top A\nabla_z\Pi_0\varphi\right)\right],
    \end{equation}
    whence, using~$\mathrm{Cov}_{\kappa}\left(v\right)=\beta^{-1}\Id$, we find
    \begin{equation}
        \Pi_{\subplus}\widetilde{\cA}^2\Pi_0 = \frac1\beta\left[\mathcal M(A)\cdot\nabla_z + A^2 : \nabla_z^2\right]\Pi_0,\qquad \left(\mathcal M(A)\right)_{i} = \sum_{k,\ell}A_{k\ell}\partial_{\ell}A_{ki}.
    \end{equation}
    It follows that
    \begin{equation}
        \begin{aligned}
    \left\|\Pi_{\subplus}\widetilde{A}^2\Pi_0\left(\widetilde{\cA}_{\subplus 0}^*\widetilde{\cA}_{\subplus 0}\right)^{-1}\Pi_0\varphi\right\|^2_{\wLmuz}&\leq \frac{2}{\beta^2}\|\mathcal{M}(A)\|^2_{L^\infty(\cX;\R^d)}\left\|\nabla_z\Pi_0\left(\widetilde{\cA}_{\subplus 0}^*\widetilde{\cA}_{\subplus 0}\right)^{-1}\Pi_0\varphi\right\|^2_{L^2(\nu)}\\
    &+\frac{2}{\beta^2}\left\|A^2\right\|^2_{L^\infty_{\mathrm{HS}}}\left\|\nabla_z^2\Pi_0\left(\widetilde{\cA}_{\subplus 0}^*\widetilde{\cA}_{\subplus 0}\right)^{-1}\Pi_0\varphi\right\|^2_{L_{\mathrm{HS}}^2(\nu)}.
        \end{aligned}
    \end{equation}
    The first term is controlled using the~$H^1(\nu)$-coercivity argument following~\eqref{eq:J_def}. For the second one, we adapt the regularity estimate from~\cite[Lemma 3.6]{BFLS22}.
    Let~$g = \left(\widetilde{\cA}_{\subplus 0}^*\widetilde{\cA}_{\subplus 0}\right)^{-1}f$, where~$f\in L_0^2(\nu)\cap\testfuncs(\cX)$. It is sufficient to show that, for some~$C>0$,~$\|\nabla^2g\|_{L^2_{\mathrm{HS}}(\nu)}\leq C\|f\|_{L^2(\nu)}$, the general bound following by a density argument.
\end{proof}
From the commutation relation~$[\nabla,\nabla^*] = \beta\nabla^2 V$ (where~$\nabla,\nabla^*$ act on vector fields), we get the formula, for a smooth vector field~$\phi$:
\begin{equation}
    \left\|\nabla^* \phi\right\|^2_{L^2(\nu)} = \sum_{1\leq i,j\leq d}\int_{\cX}(\partial_i \phi_j)(\partial_j\phi_i)\,\d \nu + \beta\int_{\cX}\partial_{ij}^2 V \phi_i\phi_j\,\d \nu = \int_{\cX}\mathrm{Tr}\left[\left(\nabla\phi\right)^2\right]\,\d\nu + \beta\int_{\cX}\phi^\top \nabla^2V\phi\,\d \nu,
\end{equation}
which we apply to~$\phi = A^2\nabla g$ to get the Bochner-like identity:
\begin{equation}
    \label{eq:bochner_identity}
    \beta^2\left\|f\right\|^2_{L^2(\nu)}=\left\|\nabla^*\phi\right\|^2_{L^2(\nu)} = \int_{\cX}\mathrm{Tr}\left[\left(\nabla A^2\nabla g\right)^2\right]\,\d \nu + \beta\int_{\cX}\nabla g^\top A^2\nabla^2 V A^2 \nabla g\,\d\nu,
\end{equation}
where we use~\eqref{eq:AstarA_identity} for the first equality.

We then write
\begin{equation}
    \mathrm{Tr}\left[\left(\nabla A^2\nabla g\right)^2\right]=\mathrm{Tr}\left[\left( A^2\nabla^2 g + D_A g\right)^2\right]
\end{equation}
for some first-order, matrix-valued differential operator~$D_A$ with~$L^\infty(\cX)$ coefficients, since~${A\in\cW^{1,\infty}}$ by Assumption~\ref{hyp:elliptic}. Expanding the square and cycling the trace, we find
\begin{equation}
    \begin{aligned}
        \label{eq:desired_bound}
    \beta^2\left\|f\right\|^2_{L^2(\nu)} &= \int_{\cX}\mathrm{Tr}\left[(A\nabla^2 g A)^2\right]\,\d \nu + T_3(A,g)\\
    &=\left\|A\nabla^2 g A\right\|^2_{L^2_{\mathrm{HS}}(\nu)}+2 \int_\cX \mathrm{Tr}\left[A^2\nabla^2 g D_A g\right]\,\d \nu + \int_{\cX}\mathrm{Tr}\left[\left(D_A g\right)^2\right]\,\d \nu+T_3(A,g)\\
    &\geq \frac1{M_A^4}\left\|\nabla^2 g\right\|^2_{L^2_{\mathrm{HS}}(\nu)}+T_2(A,g)+ T_3(A,g)+T_1(A,g)
    \end{aligned}
\end{equation}
with~$T_2(A,g),T_3(A,g)$ the two rightmost terms in the second line and~$T_1(A,g)$ is the rightmost term in~\eqref{eq:bochner_identity}. Then, for any~$\varepsilon>0$, there exists constants~$C^{(2)}_{\varepsilon,A},C^{(2)'}_{\varepsilon,A}$ independent of~$f$ such that
\begin{equation}
    \begin{aligned}
        \label{eq:t2_bound}
        |T_2(A,g)| &\leq \left|\int_{\cX}\|A^2\nabla^2 g\|_{\mathrm{HS}}\|D_A g\|_{\mathrm{HS}}\,\d \nu\right|\\
        &\leq 2\left\|A^2\nabla^2 g\right\|_{L^2_{\mathrm{HS}}(\nu)}\left\|D_A g\right\|_{L^2_{\mathrm{HS}}(\nu)}\\
        &\leq \varepsilon\left\|\nabla^2 g\right\|^2_{L^2_{\mathrm{HS}}(\nu)}+C^{(2)}_{\varepsilon,A}\|\nabla g\|^2_{L^2(\nu)}\\
        &\leq \varepsilon\left\|\nabla^2 g\right\|^2_{L^2_{\mathrm{HS}}(\nu)} + C^{(2)'}_{\varepsilon,A}\|f\|^2_{L^2(\nu)}
    \end{aligned}
\end{equation}
using two Cauchy--Schwarz inequalities followed by Young's inequality, as well as Assumption~\ref{hyp:elliptic} to uniformly bound the contributions of~$A$. In the last line, we use the~$L_0^2(\nu)\to L_0^2(\mu)$-boundedness of~$\nabla_z\left(\widetilde{A}_{\subplus 0}^*\widetilde{\cA}_{\subplus 0}\right)^{-1}$, see the discussion following~\eqref{eq:J_def}.

Using the same two Cauchy--Schwarz inequalities and the boundedness of the same operator, we find there exists~$C_A^{(3)},C_A^{(3)'}>0$ independent of~$f$ such that
\begin{equation}
    \label{eq:t3_bound}
    \left|T_3(A,g)\right| \leq C_A^{(3)}\|\nabla g\|^2_{L^2(\nu)} \leq C_A^{(3)'}\|f\|^2_{L^2(\nu)}.
\end{equation}

Finally, we proceed, as in the proof of~\cite[Lemma 3.6]{BFLS22}, to find, for any~$\varepsilon>0$ a constant~$C_{A,\varepsilon}^{(1)}>0$ independent from~$f$ and constants~$C_A^{(1)},C_A^{(1)'},C_{V,d}>0$ independent from~$f$ and~$\varepsilon$, such that
\begin{equation}
    \begin{aligned}
    |T_1(A,g)| &= \beta\left|\int_{\cX} \left(A\nabla g\right)^\top \nabla^2 V \left(A \nabla g\right)\right|\,\d \nu \\
    &\leq \beta \left|\int_{\cX}|A\nabla g|^2\|\nabla^2 V\|_{\mathrm{HS}}\,\d \nu\right|\\
    &\leq \beta \|A\|_{L^\infty_{\mathrm{op}}}^2 \left|\int_{\cX}\|\nabla^2 V\|_{\mathrm{HS}}|\nabla g|^2\,\d \nu\right|\\
    &\leq C_A^{(1)}\|\nabla g\|^2_{L^2(\nu)} + C_A^{(1)'}\left|\int_{\cX}|\nabla V||\nabla g|^2\,\d \nu\right|\\
    &\leq C_{A,\varepsilon}^{(1)}\|\nabla g\|^2_{L^2(\nu)} + \varepsilon\||\nabla V||\nabla g|\|^2_{L^2(\nu)}\\
    &\leq C_{A,\varepsilon}^{(1)}\|\nabla g\|^2_{L^2(\nu)} + \varepsilon C_{V,d}\left(\left\|\nabla g\right\|^2_{L^2(\nu)} + \left\|\nabla^2 g\right\|^2_{L_{\mathrm{HS}}^2(\nu)}\right),
    \end{aligned}
\end{equation}
where we use Equation~\eqref{eq:Vreg} from Assumption~\ref{hyp:UVreg} in the fourth line, Young's inequality in the fifth line, and~\cite[Lemma A.24]{V06} in the last line, together with the inequality $|\nabla |\nabla g|| \leq \|\nabla^2 g\|_{\mathrm{HS}}$ (to be understood in a weak sense).

Finally, we use the boundedness of~$\nabla_z\left(\widetilde{A}_{\subplus 0}^*\widetilde{\cA}_{\subplus 0}\right)^{-1}$ once more to find, for any~$\varepsilon>0$,~$C_{A,\varepsilon}^{(1),'}>0$ independent from~$f$ such that
\begin{equation}
    \label{eq:t1_bound}
|T_1(A,g)| \leq C_{A,\varepsilon}^{(1)'}\|f\|^2_{L^2(\nu)} + \varepsilon\|\nabla^2 g\|^2_{L^2_{\mathrm{HS}}(\nu)}.
\end{equation}

Inserting the estimates~\eqref{eq:t2_bound},~\eqref{eq:t3_bound},~\eqref{eq:t1_bound} into the inequality~\eqref{eq:desired_bound} and setting~$\varepsilon$ to a sufficiently small value finally yields the existence of~$C_{A}>0$ independent from~$f$ such that
\begin{equation}
    \left\|\nabla^2 g\right\|_{L^2_{\mathrm{HS}}(\nu)}\leq C_A\left\|f\right\|,
\end{equation}
which concludes the proof of the boundedness of~$K_A$. In view of the definition~\eqref{eq:def_CD}, both~$C_\lambda$ and~$D_\lambda$ are therefore bounded by~$\mathcal O(\lambda)$ in their respective operator norms as~$\lambda\to +\infty$.

The remainder of the proof is identical to Lemma~\ref{lemma:hypocoercivity}, in view of~\eqref{eq:micro_coercivity_constant} and the bounds on~$C_\lambda$ and~$D_\lambda$.
We obtain the equivalents of the estimates~\eqref{eq:finalbound_A},~\eqref{eq:finalbound_B} and~\eqref{eq:final_bound_grad_p} in exactly the same way.

\begin{lemma}[Well-posedness of~\eqref{eq:underdamped_friction_dependent} and~\eqref{eq:overdamped_friction_dependent}]
    \label{lemma:wp}
    Suppose that Assumptions~\ref{hyp:smoothness} is satisfied. Then for any~$\lambda>0$, there exists globally-defined continuous solutions to the SDEs~\eqref{eq:underdamped_friction_dependent}. If Assumption~\ref{hyp:UVreg} holds, the same conclusion holds for the SDE~\eqref{eq:overdamped_friction_dependent}. These solutions are unique, up to indistinguishability.
\end{lemma}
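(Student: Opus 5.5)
The plan is to rely on the classical Itô theory: locally Lipschitz coefficients give local strong existence and pathwise uniqueness, and a Lyapunov function then rules out explosion in finite time. Uniqueness up to indistinguishability follows from pathwise uniqueness, since solutions have continuous paths.

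\emph{Underdamped dynamics.} Under Assumption~\ref{hyp:smoothness}, the drift $(q,p)\mapsto(\nabla U(p),-\nabla V(q)-\lambda D^{-1}(q)\nabla U(p))$ and the diffusion coefficient $\sqrt{2\lambda/\beta}\,D^{-1/2}(q)$ are smooth, hence locally Lipschitz. Standard results therefore give a unique strong solution up to an explosion time $\zeta=\lim_n\tau_n$, where $\tau_n$ is the exit time from a large ball or sublevel set.

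To show $\zeta=\infty$ almost surely, I will use the Hamiltonian as a Lyapunov function: $\mathcal W=1+H-\min H$, which is well defined because $U$ and $V$ have precompact sublevel sets and are therefore bounded below. A direct computation gives
\begin{equation*}
\cL_\lambda H=\lambda\left(-\nabla U^\top D^{-1}\nabla U+\tfrac1\beta D^{-1}:\nabla^2U\right).
\end{equation*}
The first term is nonpositive. Since $\nabla U$ is globally Lipschitz, $\nabla^2U$ is bounded, so the second term is bounded by a constant (the factor $D^{-1}$ is controlled where $D$ is bounded below, or locally otherwise). Hence $\cL_\lambda\mathcal W\le C\mathcal W$. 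Applying Itô's formula to $\e^{-Ct}\mathcal W$ stopped at $\tau_n$ gives $\E[\mathcal W(\tau_n\wedge t)]\le \e^{Ct}\E[\mathcal W(0)]$. Because the sublevel sets of $H$ are precompact, $\mathcal W\to\infty$ at infinity, and so $\P(\tau_n\le t)\to0$. If the initial condition is not integrable, I will first condition on $(q_0,p_0)$ in a compact set and then take a union over these sets. On the torus, $q$ lives in a compact space, so only $p$ can escape, and the same argument applies.

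\emph{Overdamped dynamics.} The drift $b=-D\nabla V+\beta^{-1}\div D$ and the diffusion $\sqrt{2/\beta}\,D^{1/2}$ are smooth, giving local existence and uniqueness. Under the stated assumptions they are in fact globally Lipschitz: $\nabla V$ is Lipschitz, $D\in\cW^{2,\infty}$ implies $\div D$ is Lipschitz, and $D^{1/2}$ is Lipschitz by uniform ellipticity. The one term that needs care is $D\nabla V$, whose gradient is $D'\nabla V+D\nabla^2V$. This is not bounded if $|\nabla V|$ grows while $D'\ne0$, so I will not rely on global Lipschitz continuity. Instead I will use the Lyapunov function $\mathcal W(x)=1+|x|^2$ (or $1+V-\min V$). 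Since $\nabla V$ is Lipschitz, it grows at most linearly, so $b$ has linear growth; together with bounded $D^{1/2}$ this gives $\cLod\mathcal W\le C\mathcal W$ and hence non-explosion by the same stopping argument. Assumption~\ref{hyp:UVreg} is used here only to ensure the moment bounds on $|\nabla V|$ under $\nu$, which make the construction consistent with the stationary start used in Theorem~\ref{thm:overdamped_limit}.

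\emph{Main obstacle.} The only delicate point is the global-in-time bound, that is, finding a Lyapunov function whose generator bound does not depend on the unbounded parts of the coefficients. For the kinetic dynamics this is resolved by energy dissipation: the transport part $\cLham$ annihilates $H$. For the overdamped dynamics it is resolved by the linear growth of the drift. I will check these two generator inequalities carefully; the rest is standard.
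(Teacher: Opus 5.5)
Your treatment of the kinetic SDE~\eqref{eq:underdamped_friction_dependent} is the paper's argument. You take local strong solutions from locally Lipschitz coefficients, then apply Khasminskii's criterion to a shifted Hamiltonian, using $\cLham H=0$ and $\lambda\cLou H\le \frac{\lambda}{\beta}\|D^{-1}\|_{L^\infty_{\mathrm{HS}}}\|\nabla^2U\|_{L^\infty_{\mathrm{HS}}}$. Your hedge ``or locally otherwise'' does not work, though. On $\cX=\R^d$ a merely local bound on $D^{-1}$ gives no global constant in $\cL_\lambda\mathcal W\le C\mathcal W$, so you need the uniform ellipticity of Assumption~\ref{hyp:elliptic}. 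The paper also uses this silently, even though the lemma only lists Assumption~\ref{hyp:smoothness}.

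For the overdamped SDE your route genuinely differs. The paper gets non-explosion from the Poincar\'e inequality for $\nu$ in Assumption~\ref{hyp:UVreg}, via an external result that turns a Poincar\'e inequality into a Lyapunov function. You instead take $\mathcal W(x)=1+|x|^2$ and use the linear growth of $b=-D\nabla V+\beta^{-1}\div D$. That growth follows from the Lipschitz continuity of $\nabla V$ and the boundedness of $D$ and $\div D$. What your approach buys:
\begin{itemize}
\item It is elementary and self-contained.
\item It shows that Assumption~\ref{hyp:UVreg} is not needed for well-posedness; Assumptions~\ref{hyp:smoothness} and~\ref{hyp:elliptic} suffice.
\end{itemize}
The paper's route does not rely on linear growth of the drift, which your argument needs. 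However, it leaves implicit how the cited Poincar\'e--Lyapunov correspondence adapts to the anisotropic generator $\cLod$.

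Your side remark is correct and worth keeping: $b$ is not globally Lipschitz when $D'\neq0$ and $|\nabla V|$ is unbounded. This matters because the Gr\"onwall step in the proof of Theorem~\ref{thm:overdamped_limit} asserts that such a Lipschitz constant $L_b$ exists.
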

\begin{proof}
    Let~$\lambda>0$. By Assumption~\ref{hyp:smoothness}, the coefficients of the SDE~\eqref{eq:underdamped_friction_dependent} are locally Lipschitz, and strong solutions are therefore defined up to an explosion time. To extend this result to global-in-time solutions, we use a sufficient Lyapunov criterion, see~\cite[Theorem 3.5]{K12} or~\cite[Theorem 5.9]{RB06}.
    Recalling the definition of the Hamiltonian~$H(q,p)=V(q)+U(p)$, we compute
    \begin{equation}
       \cL_\lambda H = \lambda \cLou H = - \lambda \nabla U^\top D^{-1}\nabla U + \frac{\lambda}{\beta}D^{-1}:\nabla^2 U \leq \frac{\lambda}{\beta}\|D^{-1}\|_{L^\infty_{\mathrm{HS}}}\|\nabla^2 U\|_{L^\infty_{\mathrm{HS}}}.
    \end{equation}
    Since~$H$ has precompact sublevel sets (by Assumption~\ref{hyp:smoothness}), there exists~$c_0>0$ such that~$H(q,p)\geq - c_0 $ for all~$(q,p)\in \cX\times\R^d$, whereby the Lyapunov function
    \begin{equation}
        \cW = 2+\frac{H}{c_0} : \cX\times\R^d \to [1,+\infty)
    \end{equation}
    satisfies
    \begin{equation}
        \cL_\lambda \cW \leq \frac{\lambda}{\beta c_0}\|D^{-1}\|_{L^\infty_{\mathrm{HS}}}\|\nabla^2 U\|_{L^\infty_{\mathrm{HS}}}\cW,\qquad |\cW(q,p)|\xrightarrow{(q,p)\to \infty} + \infty.
    \end{equation}
    Applying Khasminskii's criterion~\cite[Theorem 3.5]{K12}, we obtain the claim for the underdamped dynamics~\eqref{eq:underdamped_friction_dependent}.
    
    The case of the limiting dynamics~\eqref{eq:overdamped_friction_dependent}, which also has locally Lipschitz coefficients, follows from Assumption~\ref{hyp:UVreg}, since the Poincar\'e inequality~\eqref{eq:poincare} for~$\nu$ implies the existence of a Lyapunov function (see for instance~\cite[Theorem 2.3]{CGZ13}), and in turn the non-explosivity of the dynamics~\eqref{eq:overdamped_friction_dependent}.
\end{proof}

\begin{lemma}[Sufficient condition for hypoellipticity]
    \label{lemma:hypoellipticity}
    Suppose Assumptions~\ref{hyp:smoothness} and~\ref{hyp:elliptic} are satisfied, and furthermore that~$\nabla^2 U$ is everywhere non-degenerate. Then Assumption~\ref{hyp:hypoellipticity} is satisfied.
\end{lemma}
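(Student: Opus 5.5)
The plan is to apply Hörmander's bracket criterion to the generator $\cL_\lambda$, written in sum-of-squares form $\cL_\lambda = X_0 + \sum_{k=1}^d X_k^2$ with smooth vector fields. Let $\sigma = D^{-1/2}$, smooth by Assumptions~\ref{hyp:smoothness} and~\ref{hyp:elliptic}. We set
\begin{equation*}
X_k = \sqrt{\tfrac{\lambda}{\beta}}\sum_{i=1}^d \sigma_{ik}(q)\,\partial_{p_i},\qquad 1\leq k\leq d .
\end{equation*}
Since $\sigma$ depends only on $q$, these fields commute with multiplication by functions of $q$ under $\partial_p$. Hence $\sum_k X_k^2 = \frac{\lambda}{\beta}\sum_{i,j}(\sigma\sigma^\top)_{ij}\partial_{p_i}\partial_{p_j} = \frac{\lambda}{\beta}D^{-1}:\nabla_p^2$, with no first-order correction. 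We then take the drift field $X_0 = \nabla U(p)^\top\nabla_q - \nabla V(q)^\top\nabla_p - \lambda (D^{-1}\nabla U)^\top\nabla_p$, so that $\cL_\lambda = X_0+\sum_k X_k^2$ exactly. All coefficients are $\mathcal C^\infty$, so Hörmander's theorem~\cite{H67} applies once the bracket condition is verified at every point of $\cX\times\R^d$. It then yields hypoellipticity of $\cL_\lambda$ in the sense of~\eqref{eq:hypoelliptic}, since $\cL_\lambda f = g$ holds in the distributional sense.

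For the bracket condition, the fields $X_1,\dots,X_d$ already span the momentum directions at each point, because $\sigma(q)$ is invertible by uniform ellipticity. It remains to generate the $\partial_q$ directions, for which we compute $[X_k, X_0]$. Since the coefficients of $X_k$ depend only on $q$ and act only in $p$, the $\partial_q$-component of the bracket comes solely from $X_k$ acting on the coefficient $\nabla U(p)$ of $X_0$. This gives
\begin{equation*}
[X_k,X_0] = \sqrt{\tfrac{\lambda}{\beta}}\sum_{i,j}\sigma_{ik}(q)\,\partial_{ij}^2U(p)\,\partial_{q_j} \;+\; (\text{terms in }\partial_p).
\end{equation*}
One should check the sign convention and the $\partial_p$ terms arising from $X_0$ differentiating $\sigma(q)$, but these lie in the span of the $\partial_p$ fields anyway. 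The $\partial_q$-parts of the brackets $[X_k,X_0]$, for $k=1,\dots,d$, are thus the columns of $\sqrt{\lambda/\beta}\,\nabla^2U(p)\,\sigma(q)$, a product of two invertible matrices by hypothesis. Together with the $X_k$, the $2d$ vectors therefore span $\R^{2d}$ at every point, and the parabolic Hörmander condition holds with brackets of length at most two.

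The main, albeit mild, obstacle is bookkeeping. We must confirm that the sum-of-squares representation has no spurious first-order term, which uses the $q$-only dependence of $\sigma$. We must also make sure the version of Hörmander's theorem invoked gives hypoellipticity for $\cL_\lambda$ rather than only for $\partial_t-\cL_\lambda$; the version in~\cite{H67} for $X_0+\sum X_k^2$ covers this directly. Global Lipschitz or boundedness assumptions are not needed, since hypoellipticity is a local property.
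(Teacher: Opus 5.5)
Your proof is correct and takes essentially the same route as the paper. Both apply Hörmander's bracket condition to the fields $\sqrt{\lambda/\beta}\,D^{-1/2}\nabla_p$ and their brackets with the drift; the $\partial_q$-components of these brackets form, up to a constant, the invertible matrix $D^{-1/2}\nabla^2 U$, so brackets of length at most two span all $2d$ directions. Your explicit check that $\cL_\lambda = X_0+\sum_k X_k^2$ has no first-order correction is also slightly more careful than the paper's decomposition $\cA_{0,\lambda}+\sum_j \cA_{j,\lambda}^\dagger\cA_{j,\lambda}$, which contains a harmless sign slip since $\cA_{j,\lambda}^\dagger = -\cA_{j,\lambda}$ there.
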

\begin{proof}
    We write
    \begin{equation}
       \cL_\lambda = \cA_{0,\lambda} + \sum_{j=1}^d \cA_{j,\lambda}^\dagger \cA_{j,\lambda},\quad \forall\,1\leq j\leq d,\quad \cA_{j,\lambda} = \sqrt{\frac{\lambda}{\beta}}\left(D^{-1/2}(q)\nabla_p\right)_j,
    \end{equation}
    where~``$\dagger$'' denotes the~$L^2(\cX\times\R^d)$ formal adjoint, and
    \begin{equation}
    \cA_{0,\lambda}=\cLham -\lambda \nabla U^\top D^{-1}(q)\nabla_p.
    \end{equation}
    We check that the vector fields~$(\cA_{j,\lambda})_{0\leq j\leq d}$ satisfy H\"ormander's bracket condition (see~\cite{H67}), namely that their Lie algebra has full rank~$2d$ everywhere.

    Computing, for~$1\leq \alpha\leq d$, the commutator
    \begin{equation}
        \begin{aligned}
            \left[\cA_{0,\lambda},\cA_{\alpha,\lambda}\right] &= \sqrt{\frac{\lambda}{\beta}}\sum_{i,j,k} \left[(\partial_i U)\partial_{q_i}-\left(\partial_i V\right)\partial_{p_i}-\lambda\left(\partial_i U\right) D^{-1}_{ij}\partial_{p_j},D^{-1/2}_{\alpha k}\partial_{p_k}\right]\\
            &=\sqrt{\frac{\lambda}{\beta}}\sum_{i,j,k} \left[(\partial_i U)\partial_{q_i}-\lambda\left(\partial_i U\right) D^{-1}_{ij}\partial_{p_j},D^{-1/2}_{\alpha k}\partial_{p_k}\right]\\
            &=-\sum_{i,k}\sqrt{\frac{\lambda}{\beta}}(\partial^2_{ki}U)D^{-1/2}_{\alpha k}\partial_{q_i} + \mathfrak{X}_{\alpha,\lambda}\\
            &=-\sqrt{\frac{\lambda}{\beta}}\left(D^{-1/2}\nabla^2 U\nabla_q\right)_\alpha + \mathfrak{X}_{\alpha,\lambda}
        \end{aligned}
    \end{equation}
    for some~$\mathfrak{X}_{\alpha,\lambda}\in\,\mathrm{Span}(\partial_{p_i},1\leq i\leq d)$. Since~$D^{-1/2}$ has full rank everywhere by Assumption~\ref{hyp:elliptic}, it follows that~$\mathrm{Span}(\partial_{q_i},\partial_{p_i},1\leq i\leq d) = \mathrm{Span}(\cA_{i,\lambda},[\cA_{0,\lambda},\cA_{i,\lambda}],1\leq i\leq d)$ whenever~$\nabla^2 U$ has full rank.
\end{proof}
Even if~$\nabla^2 U$ is degenerate at some points, one may still be able to show the hypoellipticity of the generator by considering higher-order brackets, which leads to sufficient conditions involving higher derivatives of~$U$, see the discussion in~\cite[Appendix A]{BS25}.

\begin{lemma}[Index computation]
    \label{lemma:commutation}
    Let~$A:\cX\to\R^{d\times d}$ be a smooth matrix field such that~$A^{-\top}$ has gradient rows, and~$v(q,p)=A(q)p$. Then the relation~\eqref{eq:symmetry_condition} holds.
\end{lemma}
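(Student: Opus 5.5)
We need to show $A^{-1}\nabla_q v = (\nabla_q v)^\top A^{-\top}$, where $v(q,p)=A(q)p$ and $\nabla_q v$ is the Jacobian $(\nabla_q v)_{ij} = \partial_{q_j} v_i$. The plan is a direct index computation: express everything through $B := A^{-\top}$, whose rows are gradients, so that $B = \nabla_q x$ with $B_{ij} = \partial_j x_i$. The gradient-row hypothesis then gives the key symmetry $\partial_k B_{ij} = \partial_j B_{ik}$ (equality of mixed partials of $x_i$). Note that $A = B^{-\top}$, so $A^{-1} = B^\top$ and $A^{-\top} = B$. The target identity therefore reads
\[
B^\top \nabla_q v = (\nabla_q v)^\top B,
\]
i.e. the matrix $S := B^\top \nabla_q v$ is symmetric.

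First I will differentiate $v = A p$. Rather than differentiating $A$ directly, I will differentiate the relation $B^\top A = \Id$, which gives $\partial_j A = -B^{-\top}(\partial_j B)^\top A = -A(\partial_j B)^\top A$, using $B^{-\top} = A$. Hence
\[
\partial_j v = -A(\partial_j B)^\top A p = -A(\partial_j B)^\top v .
\]
Multiplying on the left by $B^\top = A^{-1}$, the $j$-th column of $S$ is $-(\partial_j B)^\top v$. In components,
\[
S_{kj} = -\sum_i \partial_j B_{ik}\, v_i .
\]
The gradient-row symmetry $\partial_j B_{ik} = \partial_k B_{ij}$ then yields $S_{kj} = -\sum_i \partial_k B_{ij} v_i = S_{jk}$, so $S$ is symmetric, which is exactly \eqref{eq:symmetry_condition}.

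The only delicate point is keeping the index and transpose conventions consistent: $\nabla_q x = A^{-\top}$ together with the Jacobian convention for $\nabla_q v$. I will check this convention against its use in the symplecticity block computation, where $\nabla_q x^\top \nabla_q v$ appears. The rest is routine, so I do not expect a genuine obstacle; I will also note that only the local exactness of the rows (closedness) is used.
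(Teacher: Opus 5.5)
Your proof is correct and is essentially the paper's argument. You set $A^{-1}=B^\top$ with $B=\nabla_q x$, differentiate the inverse relation to get $A^{-1}\partial_j v=-\partial_j(A^{-1})\,v$, and conclude by equality of mixed second derivatives; this is exactly the paper's identity $(A^{-1}\nabla_q v)_{ij}=-\sum_\gamma \partial^2_{ji}\phi_\gamma\, v_\gamma$ with $\phi_\gamma=x_\gamma$, up to notation.
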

\begin{proof}
    Since~$A^{-1}$ has gradient columns, we may write locally~$(A^{-1})_{ij} = \partial_i \phi_j$ for some~${(\phi_j)_{1\leq j\leq d}\in\mathcal C^\infty(\cX)^d}$.
    We compute, with derivatives taken with respect to the~$q$-variable:
    \begin{equation}
        \begin{aligned}
            (A^{-1}\nabla_q v)_{ij} &= \sum_{\alpha,\beta} (A^{-1})_{i\alpha}\partial_j(A_{\alpha\beta})p_\beta\\
            &=-\sum_{\alpha,\beta}(A^{-1})_{i\alpha}\left[A\partial_j(A^{-1})A\right]_{\alpha\beta}p_\beta\\
            &=-\sum_{\gamma,\beta}\partial_j(A^{-1})_{i\gamma}A_{\gamma\beta}p_\beta\\
            &=-\sum_{\gamma}\partial^2_{ji}\phi_\gamma v_\gamma,
        \end{aligned}
    \end{equation}
    using the matrix identity~$\partial_j A = -A\partial_j(A^{-1})A$. This shows that~$A^{-1}\nabla_q v$ is symmetric, and therefore that~\eqref{eq:symmetry_condition} holds.
\end{proof}

\paragraph{Acknowledgements}
The authors thanks Tony Leli\`evre, Laurent Michel, Mathias Rousset and Gabriel Stoltz for giving helpful feedback on a preliminary version of this work, and bringing the dynamics~\eqref{eq:effective_ul} to our attention.
 This work received funding from the European Research Council (ERC) under the
    European Union's Horizon 2020 research and innovation program (project
    EMC2, grant agreement No 810367).

\bibliographystyle{abbrv}
\bibliography{bibliography.bib}
\end{document}